\pdfoutput=1
\documentclass[11pt, a4paper, ngerman, UKenglish]{scrartcl}

\usepackage[utf8]{inputenc}
\usepackage[ngerman, main=UKenglish]{babel}
\usepackage[pdftex,hyperfootnotes=false,pdfpagelabels, hidelinks,psdextra]{hyperref}
\usepackage[dvipsnames]{xcolor}
\usepackage{graphicx}

\usepackage{enumerate}
\usepackage{csquotes}
\usepackage{comment}
\usepackage{xspace}

\let\oldtexttt\texttt
\renewcommand{\texttt}[1]{{\small\oldtexttt{#1}}}

\usepackage{setspace}
\usepackage{relsize}

\newlength{\alphabet}
\usepackage{geometry}
\usepackage[backend=biber, style=alphabetic, maxbibnames=5]{biblatex}
\usepackage{amsfonts}
\usepackage{libertinus}
\usepackage{libertinust1math}
\usepackage[T1]{fontenc}
\usepackage{eucal}
\usepackage{beramono}
\usepackage[semibold]{sourcesanspro}
\usepackage{mathrsfs}
\usepackage{bm}
\usepackage{enumitem}

\usepackage{textcase}
\newcommand{\flatcaps}[1]{\large{\textsc{\MakeTextLowercase{#1}}}}

\usepackage{ragged2e}
\usepackage[protrusion=true, expansion=true]{microtype}

\nobreak

\PassOptionsToPackage{norule, ragged, hang}{footmisc}
\RequirePackage{fnpct}
\RequirePackage{footmisc}

\makeatletter
\renewcommand\@makefntext[1]{
    \@thefnmark.~#1}
\makeatother

\addtokomafont{sectioning}{\sffamily}

\usepackage{booktabs}
\usepackage{multirow}
\usepackage{longtable}
\usepackage{rotating}

\usepackage{wrapfig}
\usepackage{graphicx}
\usepackage[font=small, labelfont={sf, bf}]{caption}
\usepackage{subcaption}

\usepackage{tikz-cd}
\usetikzlibrary{decorations.pathmorphing, positioning}

\usepackage{mathtools}
    \mathtoolsset{showonlyrefs}
\usepackage{amsthm}
\usepackage{amsmath}
\usepackage{amssymb}
\usepackage{tensor}
\usepackage{mathdots}
\usepackage{nicefrac}
\usepackage{xfrac}
\usepackage[lined, algoruled, english]{algorithm2e}
    \SetAlgoCaptionSeparator{.}
    
\numberwithin{equation}{section}

\usepackage{lipsum}
\usepackage{pgfgantt}

\newtheoremstyle{TEMPLATE}% name
{} %       Space above, empty = 'usual value'
{} %       Space below
{} %       Body font
{} %       Indent amount (empty = no indent, \parindent = para indent)
{} %       Thm head font
{} %       Punctuation after thm head
{} %       Space after thm head: \newline = linebreak
{} %       Thm head spec

\newtheoremstyle{majorStatement}
    {3\topsep}
    {3\topsep}
    {\itshape}
    {}
    {\bfseries}
    {}
    {\newline} 
    {\sffamily\bfseries\thmname{#1} \thmnumber{#2}
     \nopagebreak \medskip \nopagebreak}

\newtheoremstyle{minorStatement}
    {}
    {}
    {\itshape}
    {0pt}
    {\bfseries}
    {.~}
    {0pt} 
    {\sffamily\thmname{#1} \thmnumber{#2}\thmnote{\normalfont (#3)}}
    
\theoremstyle{minorStatement}
\newtheorem{proposition}{Proposition}[section]
\newtheorem{lemma}[proposition]{Lemma}
\newtheorem{corollary}[proposition]{Corollary}

\theoremstyle{majorStatement}
\newtheorem{theorem}[proposition]{Theorem}

\newtheoremstyle{titledDefinition}
    {2\topsep}
    {2\topsep}
    {\normalfont}
    {0pt}
    {\bfseries}
    {}
    {\newline} 
    {\smallskip\sffamily\thmname{#1} \thmnumber{#2}\thmnote{\normalfont\itshape ~--~#3~--}}

\theoremstyle{titledDefinition}
\newtheorem{definition}[proposition]{Definition}

\newtheoremstyle{titledTheorem}
    {3\topsep}
    {5\topsep}
    {\itshape}
    {}
    {\bfseries}
    {}
    {\newline} 
    {\thmnote{\normalfont\flatcaps{#3}}
     \nopagebreak \medskip \nopagebreak}

\theoremstyle{titledTheorem}

\newtheoremstyle{otherStatement}
    {\topsep}
    {\topsep}
    {\normalfont}
    {0pt}
    {\bfseries}
    {.~}
    {0pt}
    {\sffamily\thmname{#1} \thmnumber{#2}}

\theoremstyle{otherStatement}

\newtheorem{remark}[proposition]{Remark}

\newtheorem{condition}[proposition]{Condition}

\renewenvironment{proof}
    {\par\vspace{\topsep}{\sffamily\bfseries Proof.~}}{\nopagebreak\par\hfill\qed\linebreak}
\newcommand{\CHAIN}{DART}

\newcommand{\R}{\mathbb{R}}
\newcommand{\N}{\mathbb{N}}

\newcommand{\seq}[2]{\left( #1 \right)_{ #2 \in \mathbb{N}}}
\newcommand{\cmpl}[1]{{#1}^{\mathrm{c}}}

\newcommand{\mc}[1]{\mathcal{ #1 }}
\newcommand{\mf}[1]{\mathfrak{ #1 }}

\newcommand{\mbb}[1]{\mathbb{ #1 }}
\newcommand{\mrm}[1]{\mathrm{ #1 }}

\DeclareMathOperator{\diag}{\mathrm{diag}}

\renewcommand{\d}[1]{~\mathrm{d}{#1}}
\newcommand{\intdom}[1]{\int_{#1}\mspace{-4mu}}
\newcommand{\argmin}[1]{\mathrm{arg}\min_{ #1 }}

\newcommand{\norm}[2][]{%
  \ifx#1\relax
    \| \mspace{1mu} {#2} \mspace{1mu} \|
  \else
        #1\| \mspace{1mu} {#2} \mspace{1mu} #1\|
  \fi
}
\newcommand{\enorm}[2][]{%
  \ifx#1\relax
    \| \mspace{1mu} {#2} \mspace{1mu} \|_{2}
  \else
        #1\| \mspace{1mu} {#2} \mspace{1mu} #1\|_{2}
  \fi
}

\newcommand{\tvnorm}[2][]{%
  \ifx#1\relax
    \big\| \mspace{1mu} {#2} \mspace{1mu} \big\|_{\tv}
  \else
        #1\| \mspace{1mu} {#2} \mspace{1mu} #1\|_{\tv}
  \fi
}

\newcommand{\iprod}[2][]{%
  \ifx#1\relax
    \mspace{2mu} \langle {#2} \rangle 
  \else
        \mspace{2mu} #1\langle {#2}  #1\rangle
  \fi
}

\DeclareMathOperator{\pr}{\mrm{Pr}}
\DeclareMathOperator{\exv}{\mathbb{E}\mspace{-1mu}}

\newcommand{\given}{~\big|~}
\newcommand{\tv}{\mathsmaller{\mathrm{TV}}}
\newcommand{\klDiv}[2]{\mrm{KL}\big({#1} \, \|~{#2}\big)}

\newcommand{\ideal}[1]{\hat{#1}}

\usepackage{float}
\usepackage{placeins}
\makeatletter
\def\fps@figure{htbp}
\makeatother

\usepackage{etoolbox}
\preto{\section}{\FloatBarrier}
\preto{\subsection}{\FloatBarrier}

\title{Fast-Mixing Markov Chains without Gradients}
\author{%
  Robert~Kutri\thanks{Institute for Mathematics and Interdisciplinary Center for Scientific Computing (IWR), Heidelberg University, 69120 Heidelberg, Germany (robert.kutri@uni-heidelberg.de).}%
  ,~Robert~Scheichl\thanks{Institute for Mathematics and Interdisciplinary Center for Scientific Computing (IWR), Heidelberg University, 69120 Heidelberg, Germany (r.scheichl@uni-heidelberg.de).}%
}
\date{\today}

\begin{document}

    \maketitle

    Labels: \textit{Markov Chain Monte Carlo, SGLMM, Gaussian Process, Surrogates}

    \begin{abstract}
    \noindent
Most approaches for accelerating Markov chain mixing either rely
on incorporating expensive geometric information in the proposals,
or reduce the per-step cost of sampling via surrogate densities.
We propose a localisation principle that allows a surrogate-based
Metropolis--Hastings proposal to exploit gradient-level geometric
information about the target density, without evaluating either the
target gradient or the surrogate gradient. The construction relies on
regularisation and tempering of the proposal measure. We show that the
expected proposal displacement coincides with the Langevin drift up to
controlled error. The resulting framework, Delayed Acceptance with
Regularisation and Tempering (DART), achieves an
\(\mathcal{O}(\kappa \max\{\kappa, d\})\) mixing time from warm start
for strongly log-concave targets with condition number~\(\kappa\)
in~\(d\) dimensions. This matches the known \(\mathcal{O}(\kappa d)\) rate
for MALA when \(d \ge \kappa\), and scales as
\(\mathcal{O}(\kappa^2)\), independent of dimension, otherwise. This
is, to our knowledge, the first mixing time guarantee for a
surrogate-transition-based MCMC method.
We demonstrate DART on a hierarchical spatial generalised linear mixed model.
In this setting, the Dirichlet–Neumann averaging parametrisation, originally
introduced for the efficient simulation of Gaussian processes, is repurposed
to supply the surrogate, and its linear memory and log-linear arithmetic
scaling in the number of observation sites carry over to inference.

    \end{abstract}

    \pagebreak

    \section{Motivation}   
        \label{sec:motivation}
        We consider the problem of sampling from a probability measure \(\Pi\)
with density
\begin{equation} \label{eq:targetDensity}
  \pi(x) \propto e^{-f(x)}, \quad \text{for } x \in \R^d,
\end{equation}
where the continuous function \(f \colon \R^d \to \R\) arises from
a computationally intensive model and cannot be expressed in closed
form.

Problems of this kind arise throughout statistics and probabilistic
machine learning. The function \(f\) may represent the negative
log-posterior of a Bayesian model obtained through a costly likelihood
evaluation
    \cite{stuart2010inverse,conrad2016accelerating}, or
a potential arising from marginalising over high-dimensional latent
states
    \cite{diggle1998model, andrieu2010particle, andrieu2009pseudo}.
In each setting, pointwise evaluation of \(f\) at scale is prohibitively
expensive, making direct sampling from \(\Pi\) intractable.
In moderate-to-high dimension, or when \(f\) induces complicated
geometry, MCMC methods are the standard tool. They require only
pointwise evaluations of \(f\) or its gradient and yield
asymptotically exact samples from \(\Pi\).

Their practical efficiency depends on several interacting design choices.
The key factors we consider are the mixing time of the chain, which
quantifies the number of steps required to reach approximate stationarity,
the per-step cost, and the parametrisation of the target density.
A suitable parametrisation of \(\pi\) can alter the geometry
of the sampling target, affecting both mixing and per-step cost simultaneously.

Incorporating local geometric information about \(f\) into the
transition mechanism is one of the most effective ways to accelerate
mixing. Methods based on Langevin dynamics
    \cite{roberts1996langevin, dalalyan2017theoretical,
          bou2013nonasymptotic}
and Hamiltonian Monte Carlo
    \cite{duane1987hybrid, neal2011mcmc, hoffman2014no}
exploit gradient information to construct proposals that respect the
local curvature of \(f\), and rigorous mixing time guarantees are
available in several settings
    \cite{durmus2017convergence, eberle2014error, dwivedi2019log,
          chen2020fast}.
These gains, however, come at a price. Each transition requires one
or more evaluations of \(\nabla f\), with methods such as
stochastic-Newton MCMC \cite{martin2012stochastic} additionally
requiring Hessian evaluations. Moreover, when the target is
multimodal, gradient-driven dynamics tend to mix well within
individual modes but struggle to traverse the low-probability
regions separating them.

Turning to the second strategy, the most direct way to reduce
per-step cost is to replace \(f\) by a cheaper surrogate
\(g\) induced by a lower-fidelity model, such as a Gaussian
process
    \cite{gramacy2020surrogates, stuart2018posterior},
a neural operator
    \cite{bhattacharya2021model, lu2021learning,
          raonic2023convolutional},
or a polynomial approximation
    \cite{westermann2026performance}.
Substituting \(g\) for \(f\), however, yields an
invariant measure that is biased relative to \(\Pi\). Asymptotic
exactness can be retained by embedding surrogates more carefully into
the MCMC procedure. The surrogate transition method (STM)
\cite{liu2001monte} runs an auxiliary chain targeting a surrogate
measure \(\tilde\Pi\) with density
\(\tilde\pi \propto e^{-g}\) and corrects the resulting
proposals via a Metropolis--Hastings step against the true target
density. Delayed acceptance (DA) MCMC \cite{christen2005markov,cui2011bayesian}
uses \(g\) to pre-screen proposals before evaluating the expensive
model. Both approaches preserve \(\Pi\) as the invariant measure of
the overall chain.

The Multi-Level Delayed Acceptance (MLDA) method
\cite{lykkegaard2023multilevel} combines surrogate transitions and
screening across multiple surrogate levels. Explicit non-asymptotic
mixing time bounds in terms of the problem parameters, of the kind
available for MALA or HMC \cite{dwivedi2019log, chen2020fast}, are to
our knowledge not available for surrogate-transition, or delayed-acceptance
methods. The efficiency of these methods further depends critically on the
availability of a surrogate hierarchy whose geometry does not differ
too severely from that of the target, and this hierarchy is fixed a
priori. Adaptive error modelling schemes \cite{lykkegaard2020multilevel}
partially address this by adjusting the surrogate online, but come
without a priori guarantees.

        \subsection*{Our Contributions}
            This manuscript makes three contributions to the design and analysis of
surrogate-driven MCMC methods.

% Contribution 1: methodological
The first is methodological. We introduce a localisation principle, achieved
through a combination of regularisation and tempering of the surrogate density.
This allows a surrogate density with sufficient gradient fidelity to carry the
same geometric information as an exact gradient-based proposal, without requiring
evaluations of either \(\nabla f\) or \(\nabla g\). When the
surrogate \(g\) is quadratic, the localised surrogate density is Gaussian and
proposals can be drawn directly, reducing the algorithm to a
Metropolis--Hastings variant with a specific, surrogate-informed proposal. When
the surrogate is not quadratic, we draw approximate samples by running a short
auxiliary Markov chain targeting the localised surrogate density. This extends
the surrogate transition framework of \cite{liu2001monte} and the delayed
acceptance approaches of \cite{christen2005markov, lykkegaard2023multilevel}
by allowing for localised, state-dependent proposals. We refer to the
resulting method as Delayed Acceptance with Regularisation and Tempering (DART).

% Contribution 2: theoretical
The second is theoretical. We prove that for strongly log-concave targets with
condition number \(\kappa\), the DART chain achieves a total-variation mixing
time of \(\mathcal{O}(\kappa \max\{\kappa, d\})\) from a warm start, provided
the surrogate has sufficient gradient fidelity. This bound exhibits two regimes.
When \(d \geq \kappa\), it matches the \(\mathcal{O}(\kappa d)\) rate established
for MALA in \cite{dwivedi2019log}.%
\footnote{%
    This MALA bound has subsequently been sharpened to the minimax-optimal
    \(\tilde{\mathcal{O}}(\kappa\sqrt{d})\) \cite{wu2022minimax}.
    Since our argument closely parallels that of \cite{dwivedi2019log}, we
    expect the same refinements to transfer. We do not pursue this here.%
}
When \(\kappa \geq d\), it becomes \(\mathcal{O}(\kappa^2)\), with no explicit
dependence on \(d\). To our knowledge, it is the first explicit
mixing time bound for a surrogate-transition-based MCMC method. 

The third is of independent interest, but strong synergy with DART. The
Dirichlet--Neumann averaging (DNA) construction of
\cite{kutri2026dirichlet} was developed for the efficient simulation of
Gaussian processes on regular grids. We repurpose it as a
parametrisation of the latent field in Bayesian models with isotropic
Gaussian priors, where inference at scale is typically bottlenecked by
a factorisation of the prior and computation of its log-determinant,
as well as the simulation of corresponding realisations.
We demonstrate the combination on a
hierarchical spatial generalised linear mixed model (SGLMM).
Under this new parametrisation, and for any sampler, the per-iteration
latent field operations collapse from quadratic to near-linear:
\begin{center}
\begin{tabular}{lccc}
\toprule
                  & latent field evaluation  & factorisation & prior log-determinant\\
\midrule
Naive (Cholesky)   & \(\mathcal{O}(d^2)\) & \(\mathcal{O}(d^3)\)  & \(\mathcal{O}(d^3)\) \\
DNA                & \(\mathcal{O}(d \log d)\)   & \(\mathcal{O}(d)\) & \(\mathcal{O}(d)\) \\
\bottomrule
\end{tabular}
\end{center}
The parametrisation separates the latent field into low- and high-frequency components
to yield a natural surrogate structure for DART. Combining the DNA parametrisation
and DART unlocks log-linear computational scaling and linear memory scaling for
hierarchical SGLMM problems with respect to the number of observations.

The manuscript is organised as follows.
Section~\ref{sec:setting} specifies the general mathematical
setting, fixes notation, and recalls the necessary background on
MCMC methods and mixing times. Section~\ref{sec:dart} introduces
the DART framework, including the algorithmic construction, the
main theoretical results, and practical aspects of implementation.
Numerical experiments illustrating the theoretical findings are
given in Sections~\ref{sec:numerics}. Section~\ref{sec:sglmm}
introduces the novel Gaussian prior parametrisation and its 
synergy with DART. Finally,
Section~\ref{sec:proof} contains the proofs of the main results.

    \section{Preliminaries}  
        \label{sec:setting}
        \subsection{Notation}  
            \label{sec:setting:notation}
            We write \(\pr\) for the probability measure on a common underlying space
carrying all random variables, and \(\mbb{E}\) for the corresponding
expectation. All measures on \(\R^d\) are defined on the Borel
\(\sigma\)-algebra \(\mf{B}(\R^d)\). Throughout, \emph{measurable} means
Borel measurable. Absolute continuity and densities are understood with
respect to the Lebesgue measure, and we leave this implicit.

We write \(\mf{P}(\R^d)\) for the space of Borel probability measures on
\(\R^d\), denote the Gaussian measure with mean \(\bar{x}\) and covariance
\(C\) by \(\mc{N}(\bar{x}, C)\), and the Dirac measure at \(x\) by
\(\delta_x\).
For \(\nu \in \mf{P}(\R^d)\) and \(\psi \sim \nu\) we write
\(\mbb{E}_\nu[\cdot]\) for expectation under \(\nu\).

The total variation distance between \(\mc{M}_1, \mc{M}_2 \in \mf{P}(\R^d)\)
is
\begin{equation}    \label{eq:tvDistance}
    \tvnorm{\mc{M}_1 - \mc{M}_2}
        \coloneqq \sup_{A \in \mf{B}(\R^d)}
            \big|\, \mc{M}_1(A) - \mc{M}_2(A) \,\big|.
\end{equation}
With this normalisation, every \(\mc{M} \in \mf{P}(\R^d)\) satisfies the
functional bound
\begin{equation}    \label{eq:tvFunctionalBound}
    \left|\, \int f \,\d{\mc{M}_1} - \int f \,\d{\mc{M}_2} \,\right|
        \le \tvnorm{\mc{M}_1 - \mc{M}_2}
\end{equation}
for any measurable \(f \colon \R^d \to [0, 1]\).

We write \(\mbb{B}(x, R)\) for the closed Euclidean ball centred at \(x\)
with radius \(R\), and \(\enorm{\cdot}\) for the Euclidean norm. For
non-negative quantities, \(A \lesssim B\) means \(A \le c B\) for a
universal constant \(c\) that depends on none of the problem parameters,
\(A \gtrsim B\) means \(B \lesssim A\), and \(A \asymp B\) means both
\(A \lesssim B\) and \(B \lesssim A\). The same symbol may denote different
constants in different expressions. We write \(f \le g\) for pointwise
inequality between functions.

Given a density \(\pi\) on \(\R^d\), we write \(\pi \propto e^{-f}\) to mean
\(\pi(x) = N^{-1} e^{-f(x)}\), where \(N \coloneqq \int_{\R^d} e^{-f(x)}
\,\d{x}\) is the normalisation constant.

        \subsection{Markov Chains and Metropolis--Hastings}   
            \label{sec:setting:mcmc}
            A Markov chain on $\R^d$ is a sequence of random variables
\(
    \Psi = \seq{\psi_i}{i}
\)
whose dynamics, is fully determined by a family of transition
probability measures
\(
    \{ \mc{T}_x \}_{x \in \R^d}
\)
satisfying
\begin{equation}    \label{eq:transitionMeasureDef}
    \pr \big(\psi_i \in A \given \psi_{i-1} = x \big)
        = \mc{T}_{x} (A)
\end{equation}
for any \(A \in \mf{B}(\R^d)\) and \(i \in \N\).
The initial state follows a probability measure
\(\psi_0 \sim \mu\), referred to as the
\emph{initial measure}.
We assume throughout that \(x \mapsto \mc{T}_x(A)\) is
measurable.
The associated \emph{transition operator} is then
\begin{equation}
    T: \mc{P}(\R^d) \to \mc{P}(\R^d),
        \qquad
            \big(T \nu\big) (A) \coloneqq
            \intdom{\R^d} \mc{T}_x (A) \d{\nu(x)},
        \quad
            A \in \mf{B}(\R^d),
\end{equation}
so that \(\psi_i \sim T^i \mu\) for any \(i \in \N\).
A probability measure \(\nu\) is \emph{invariant} under
\(T\) if \(T\nu = \nu\). We refer to \(T\) itself as the (Markov) chain,
when the context is clear.
The goal of MCMC is to construct a chain for which
\begin{equation}    \label{eq:tvConvergence}
    \tvnorm{T^i \mu - \Pi} \to 0,
    \quad \text{as} \quad i \to \infty,
\end{equation}
so that for large \(i\), the states of the chain can be
used to perform inference with respect to \(\Pi\).
The Metropolis--Hastings (MH) algorithm
\cite{metropolis1953equation, hastings1970monte} provides
a general construction achieving this. Given a family of
absolutely continuous proposal measures
\(
    \{ \mc{P}_x \}_{x \in \R^d}
\)
with densities \(p_x\), a proposal
\(z \sim \mc{P}_x\) is accepted as the next state with
probability
\begin{equation}    \label{eq:mhAcceptance}
    \alpha(x, z)
        \coloneqq
        \min \left\{
            1, \frac{\pi(z)\, p_{z}(x)}{\pi(x)\, p_{x}(z)}
        \right\},
\end{equation}
and otherwise the chain remains at \(x\).
Provided that \(\pi\) and the \(p_x\) are positive a.e.,
the resulting chain has invariant measure \(\Pi\) and
satisfies \eqref{eq:tvConvergence} for \(\Pi\)-almost
every initial state
\cite{roberts1996geometric, meyn2008markov}.
Two standard instantiations are the Metropolised random
walk (MRW), which uses Gaussian proposals
\(
    \mc{P}_x = \mc{N}(x, h^2 \mbb{I}),
\)
and the Metropolis-adjusted Langevin algorithm (MALA)
\cite{roberts1996langevin},
for a step size parameter \(h > 0\),
which shifts proposals toward regions of higher density
via
\(
    \mc{P}_x = \mc{N}\big(
        x + \tfrac{h^2}{2}\, \nabla \log \pi(x),\, h^2 \mbb{I}
    \big).
\)

        \subsection{Mixing Times and Conductance}      
            \label{sec:setting:mixing}
            While constructing a convergent Markov chain in the sense of
\eqref{eq:tvConvergence} is possible for a broad class of
target measures, quantifying the speed of this convergence
is considerably more delicate. In particular, it depends
sensitively on the geometry of the density $\pi$. We
measure the speed of convergence by the number of steps
required for a prescribed total-variation tolerance.

\begin{definition}[Mixing Time]
    Let \(\delta > 0\).
    For a fixed initial measure $\mu$ on $(\R^d, \mf{B}(\R^d))$,
    the \emph{$\delta$-mixing time} \(t_{\delta} (\mu)\)
    of a Markov chain with transition operator $T$ is defined as
    \begin{equation}
        t_{\delta}(\mu) \coloneqq
            \inf \left\{
                n \in \N: \tvnorm{T^n \mu - \Pi} < \delta
            \right\}.
    \end{equation}
\end{definition}

Mixing can be slow in general. For multi-modal target
densities, local proposals struggle to cross
low-probability regions separating modes, leading to
poor global exploration. Even for unimodal targets,
convergence can be arbitrarily slow if the chain is
initialised from an unfavourable position
\cite{bandeira2023free}. This motivates restricting
attention to classes of initial measures that represent
beneficial initial configurations.

\begin{definition}[Warmness]    \label{def:warm}
    Let $\nu$ be a probability measure on 
    \((\R^d, \mf{B}(\R^d))\) with
    continuous density $\varphi$.
    For $\beta \ge 1$, we say that
    $\nu$ is \emph{$\beta$-warm} with respect to $\Pi$
    if
    \begin{equation} 
        \sup_{x \in \R^d} \frac{\varphi(x)}{\pi(x)} \le \beta.
    \end{equation}
    We say that $\nu$ is \emph{warm} with respect to $\Pi$
    if it is $\beta$-warm for some finite $\beta$.
\end{definition}

A complementary notion is the \emph{conductance} $\Phi$
of a chain $T$ with invariant measure $\Pi$, defined by
\begin{equation}
    \Phi(T)
        \coloneqq \inf\limits_{\Pi(A) \in (0, \nicefrac{1}{2})}
            \frac{\intdom{A} \mc{T}_x(\cmpl{A})
                \, \pi(x) \d{x}}{\Pi(A)}.
\end{equation}
Conductance measures the minimal probability flux leaving
a set relative to its target mass and thus quantifies the
presence of \enquote{bottleneck regions} in the state space.
Cheeger constants in differential geometry and graph theory
are closely related notions.

To discount sets of negligible target mass, one introduces
the $s$-conductance $\Phi_s(T)$, defined for
$s \in (0, \tfrac{1}{2})$ by
\begin{equation}    \label{eq:sConductance}
    \Phi_s(T)
        \coloneqq
            \inf\limits_{
                \Pi(A) \in (s, \, \nicefrac{1}{2})
            }
            \frac{
                \intdom{A}~\mc{T}_x (\cmpl{A}) \, \pi(x) \d{x}
            }
            {
                \Pi(A) - s
            }.
\end{equation}

Warmness interacts favourably with $s$-conductance. Indeed, for any measurable $A \subset \R^d$ with $\Pi(A) < s$ and $s \in (0, \tfrac{1}{2})$, a $\beta$-warm measure $\nu$ satisfies
\begin{equation}    \label{eq:warmnessContinuityArgument}
    \nu(A)
        = \intdom{A} \varphi(x) \d{x} 
        \le \beta \intdom{A} \pi(x) \d{x}
        = \beta \Pi(A)
        \le \beta s,
\end{equation}
and therefore
\(
    |\nu(A) - \Pi(A)| \le \beta s.
\)

As a consequence, Theorem~1.4 and Corollary~1.6 in
\cite{lovasz1993random} imply that for a $\beta$-warm
initial measure $\mu$,
\begin{equation}    \label{eq:lovaszBound}
    \tvnorm{ T^i \mu - \Pi}
        \le \beta s
            + \beta \left(
                1 - \tfrac{1}{2} \Phi_s^2(T)
            \right)^i,
    \qquad i \in \N.
\end{equation}
Thus, lower bounds on the $s$-conductance
\eqref{eq:sConductance} combined with a suitable 
choice of $s$, yield explicit non-asymptotic
bounds on mixing times. This strategy has been
successfully employed in, for example,
\cite{lovasz1993random, lovasz1999hit, dwivedi2019log, chen2020fast},
and forms the basis of the analysis in Section~\ref{sec:proof}.
    \section{Localised Surrogate Transitions}
        \label{sec:dart}
        Let \(g \colon \R^d \to \R\) be a computationally cheaper
approximation to the target potential~\(f\). We construct a
state-dependent proposal measure \(\Pi_x\) by two
modifications of the surrogate density
\(\tilde{\pi} \propto e^{-g}\). First, similar to a
restricted Gaussian oracle \cite{titsias2018auxiliary, lee2021structured}
for the surrogate,
we \emph{localise} around the current state \(x \in \R^d\)
by adding a quadratic penalty scaled by \(\gamma > 0\), confining proposals to a
neighbourhood of \(x\) where short steps are more likely to be
accepted. Second, in the spirit of simulated annealing
\cite{kirkpatrick1983optimization,
marinari1992simulated, geyer1995annealing}, we \emph{temper}
the surrogate contribution by a factor \(\theta \in (0, 1]\),
making the resulting density easier to sample from. Together,
these modifications yield the potential
\begin{equation}    \label{eq:localisedSTDensity}
    V_x \colon \R^d \to \R,
    \qquad
    y \mapsto \theta\, g(y)
            + \frac{\gamma}{2} \, \enorm{y - x}^2,
\end{equation}
and we define the corresponding proposal measure \(\Pi_x\) with density
\begin{equation}    \label{eq:localisedDensity}
    \pi_x(y) \propto e^{-V_x(y)},
    \qquad y \in \R^d.
\end{equation}
Our core claim for the construction above is the following:
a draw \(z \sim \Pi_x\) has the same expected drift towards
high-density regions of \(\pi\) as a Langevin proposal, despite
never evaluating a gradient.

To illustrate when and why this might be true, define
the mean displacement
\(\bar{u}_x \coloneqq \exv_{u \sim \Pi_x} \! u\) of the
proposal for a  fixed state \(x \in \R^d\) and assume for
the moment that \(\nabla g = \nabla f\).
If \(\pi_x\) has (say) sub-Gaussian tails, then
\(\exv_{u \sim \Pi_x} \! \nabla V_x(u) = 0\) by integration
by parts. Expanding
\(\nabla V_x(u) = \theta\,\nabla g(u) + \gamma(u - x)\)
and rearranging yields
\begin{equation}    \label{eq:steinIdentity}
    \bar{u}_x
        = x - \frac{\theta}{\gamma}
            \exv_{u \sim \Pi_x} \! \nabla g(u)
        = x - \frac{\theta}{\gamma}
            \exv_{u \sim \Pi_x} \! \nabla f(u).
\end{equation}
The proposal mean is shifted from \(x\) along the
\(\Pi_x\)-averaged target gradient, with \(\theta / \gamma\)
acting as an implicit step size. A MALA proposal with step
size \(h = \sqrt{2 \theta / \gamma}\) produces the same drift with
the pointwise gradient \(\nabla f(x)\) in place of
the average gradient \(\exv_{u \sim \Pi_x} \! \nabla f(u)\).

Two technical hurdles stand in the way of the heuristic 
\eqref{eq:steinIdentity} reliably producing gradient-informed
proposals. No reasonably cheap surrogate can be expected 
to satisfy \(\nabla g = \nabla f\), so the first hurdle lies in
quantifying when the surrogate density carries sufficient
gradient information.
Secondly, for the average gradient to be a useful proxy
for the pointwise target gradient, the concentration of
\(\Pi_x\), controlled chiefly through increasing \(\gamma\),
needs to be strong enough. 
We address both issues for strongly log-concave target densities in
Theorem~\ref{thm:mixingTime}, where the gradient fidelity condition
\eqref{eq:fidelityCondition} and the required level of localisation
\eqref{eq:localisationCondition} are made explicit.

In DART, we therefore propose drawing \(z \sim \Pi_x\) and accepting it
via an MH step against the target density \(\pi\). Denote by
\(
    N_x \coloneqq \int_{\R^d} e^{-V_x(y)} \d{y}
\)
the normalisation constant of \(\pi_x\). Since the
densities \(\pi_x\) and \(\pi_z\) are normalised by different
constants when \(x \neq z\), these constants enter the
standard MH ratio~\eqref{eq:mhAcceptance} explicitly.
Owing to the symmetry of the quadratic penalty, we have
\begin{equation}    \label{eq:potentialSymmetry}
    V_x(z) - V_z(x)
        = \theta \big(g(z) - g(x)\big),
\end{equation}
and expanding the proposal density ratio gives the acceptance
probability
\begin{equation}    \label{eq:dartAcceptance}
    \alpha(x, z)
        = \min \left\{
            1,\;
            \frac{\pi(z)}{\pi(x)}
            \,
            \frac{N_x}{N_z}
            \, e^{\theta \, (g(z) - g(x))}
        \right\}.
\end{equation}
When \(z \sim \Pi_x\), the resulting Markov chain on \(\R^d\)
is reversible with respect to \(\Pi\) and admits \(\Pi\) as
its invariant measure.

Drawing \(z \sim \Pi_x\) exactly is feasible only in special
cases, the quadratic-surrogate case discussed in
Section~\ref{sec:dart:practice} being the main one. In
general, we approximate samples from~\(\Pi_x\) by running a
Markov chain, which we refer to as the \emph{root chain}, for
\(n\) steps. That is, we replace the exact proposal
\(\mc{P}_x = \Pi_x\) by
\begin{equation}    \label{eq:stProposal}
    \mc{P}_x = Q_x^n \, \mu_x \approx \Pi_x,
\end{equation}
where \(Q_x\) denotes the transition operator of a reversible
Markov chain with invariant measure~\(\Pi_x\) and \(\mu_x\) is an
initial measure. 

Because the density of \(Q_x^n \mu_x\) is in general intractable, we cannot
form its exact Metropolis ratio and instead retain
\eqref{eq:dartAcceptance}, the ratio for exact proposals
from~\(\Pi_x\). The implemented chain is then not in detailed
balance with \(\Pi\), and its bias is governed by the root-chain mixing
via \(\tvnorm{\Pi_x - Q_x^n \mu_x}\). Both localisation and
tempering reduce this error directly, the quadratic penalty making
the localised surrogate well-conditioned and tempering flattening it
further. Condition~\ref{cnd:root} fixes the root-chain length \(n\) this
requires, which Theorem~\ref{thm:mixingTime} subsequently folds into the
mixing-time bound.

This use of a state-dependent root chain is the sense in which DART
extends the surrogate transition method (STM) \cite{liu2001monte},
delayed acceptance (DA) \cite{christen2005markov}, and the
Multi-Level Delayed Acceptance (MLDA) method
\cite{lykkegaard2023multilevel}, all of which propose from a fixed,
global surrogate. Two-level MLDA is recovered as
\(\gamma \to 0\) and \(\theta = 1\), a regime favourable only
when the surrogate is at once faithful and cheap. 
Viewed from the proximal-sampling side, the
root chain realises a restricted Gaussian oracle on the
surrogate. The Metropolis step corrects it exactly, and our analysis
(Section~\ref{sec:proof}) shows the error left by running it for
finite time can be made as small as desired at only logarithmic
extra cost.

        \subsection{A Mixing Time Guarantee for DART}  
            \label{sec:dart:main}
            The main theoretical result of this manuscript is an explicit
upper bound on the mixing time of a Markov chain using DART from a warm
start, under the assumption of a strongly log-concave target
density. The bound applies to the general setting where
proposals are generated by a root chain of finite length
targeting the localised surrogate measure~\(\Pi_x\).
Corollary~\ref{cor:exactSampling} below shows that the
idealised and implemented chains coincide when proposals can be
drawn from~\(\Pi_x\) directly, as is the case for quadratic
surrogates, and the bound then holds in a stronger, monotone form.

Log-concave targets arise naturally in Gaussian models, logistic
regression, and other members of the exponential family of
response functions. More broadly, posterior measures arising
from non-linear statistical inverse problems admit accurate
log-concave approximations in Wasserstein distance under
suitable conditions \cite[Theorem~5.1.3]{nickl2023bayesian}.
We summarise the assumptions on the target geometry in the
following condition.

\begin{condition}   \label{cnd:logConcave}
    The measure \(\Pi\) is absolutely continuous with respect to the
    Lebesgue measure with density \(\pi \propto e^{-f}\), where
    \begin{enumerate}
        \item \label{cnd:logConcave:smooth}
            The potential \(f\) is continuously differentiable and has a
            Lipschitz continuous gradient with constant \(L > 0\). That
            is, for all \(x, y \in \R^d\),
            \begin{equation}
                \enorm{\nabla f(x) - \nabla f(y)}
                    \le L\,\enorm{x - y}.
            \end{equation}
        \item \label{cnd:logConcave:convex}
            The potential \(f\) is \(\lambda\)-strongly convex for some
            \(\lambda > 0\), meaning that for all \(x, y \in \R^d\),
            \begin{equation}
                \iprod{\nabla f(x) - \nabla f(y),\, x - y}
                \ge \lambda \,\enorm{x - y}^2.
            \end{equation}
    \end{enumerate}
\end{condition}

Strong convexity guarantees concentration of \(\Pi\) around a
unique mode \(x^{\star} \coloneqq \argmin{\R^d} f\)
(see e.g.~\cite{durmus2019high}). Lipschitz continuity of the
gradient provides quantitative control over how \(f\) varies
along a proposal move. We quantify the geometric complexity
of a target density satisfying Condition~\ref{cnd:logConcave}
through its \emph{condition number}
\begin{equation}    \label{eq:conditionNumber}
    \kappa \coloneqq \frac{L}{\lambda}.
\end{equation}

The following definition summarises the surrogate construction
for a target measure satisfying Condition~\ref{cnd:logConcave}.
\begin{definition}[Strongly Log-Concave Surrogate Density]
    \label{def:stronglyLogConcave}
    Let \(g : \R^d \to \R\) be \(m\)-strongly convex
    and \(M\)-smooth with unique minimiser
    \(\hat{x} \coloneqq \argmin{\R^d} g\).
    For any \(x \in \R^d\) and parameters
    \(\theta \in (0, 1]\), \(\gamma > 0\),
    define the potential
    \begin{equation}    \label{eq:surrogatePotentialDef}
        V_x(y) \coloneqq \theta \, g(y)
                    + \frac{\gamma}{2} \, \enorm{x-y}^2.
    \end{equation}
    Let \(\Pi_x\) denote the probability measure with density
    \(\pi_x\) proportional to \(e^{-V_x}\). We refer
    to \(\pi_x\) as a \emph{strongly log-concave surrogate
    density}.
\end{definition}

The analysis requires two kinds of agreement between \(g\) and
\(f\). The surrogate's mode \(\hat x\) must lie in the
same high-probability region of \(\Pi\) as the target's mode
\(x^{\star}\), formalised in Theorem~\ref{thm:mixingTime} by the
inclusion \(\hat x \in \mc{K}\). Within this region of high probability,
the gradient \(\nabla g\) must approximate \(\nabla f\) sufficiently
well. To ensure local regularity of \(\nabla g\) as \(\enorm{x-y} \to 0\),
we further require that \(g\) is itself \(m\)-strongly convex with an
\(M\)-Lipschitz gradient. Compatibility between these two requirements
is ensured by \(\lambda \le m \le M \le L\).

The final condition characterises the efficiency of the
root chain used by the algorithm. We assume the root chain
mixes in polynomial time in the dimension and condition number,
with exponents \(\omega, \tilde\omega > 0\). This is satisfied,
for instance, by MRW, MALA, and HMC targeting strongly log-concave
densities (see, for example, \cite{dwivedi2019log, chen2020fast}).
\begin{condition}   \label{cnd:root}
    Let \(\{Q_x\}_{x \in \R^d}\) be a family of transition
    operators where each \(Q_x\) has invariant measure \(\Pi_x\)
    with potential \(V_x\) that is strongly convex with Lipschitz
    continuous gradient. Denote by
    \(
        \tilde\kappa \coloneqq \sup_{x \in \R^d} \kappa(\Pi_x)
        \ge 1
    \)
    the largest condition number \eqref{eq:conditionNumber}
    among the \(\Pi_x\), assumed finite.
    There exist \(\omega, \tilde\omega > 0\) such that for every
    \(x \in \R^d\), every \(\beta_x\)-warm initial measure \(\mu_x\)
    with respect to \(\Pi_x\), and every \(\varepsilon \in (0, 1)\),
    \begin{equation}
        \tvnorm{Q_x^{n} \mu_x - \Pi_x} \le \varepsilon
        \quad \text{provided that} \quad
        n \gtrsim d^{\omega}
            \tilde\kappa^{\tilde\omega}
            \log \frac{2 \beta_x}{\varepsilon}.
    \end{equation}
\end{condition}
Under the parameter choices of Theorem~\ref{thm:mixingTime}, the
surrogate condition number \(\tilde\kappa\) is bounded by a universal constant
(see~\eqref{eq:surrogateConditionNumber}), and
Lemma~\ref{lem:surrogateWarmness} shows that the warmness requirement
is met by suitably concentrated Gaussian initial measures, with
\(\log \beta_x\) controlled uniformly over the region the chain
explores.

\begin{theorem} \label{thm:mixingTime}
    Let
    \(
        f: \R^d \to \R
    \)
    satisfy Condition~\ref{cnd:logConcave}
    and denote the mode
    \(x^{\star} \coloneqq \argmin{\R^d} f\).
    Let
    \(
        g: \R^d \to \R
    \)
    be an \(m\)-strongly convex surrogate for \(f\) with
    \(M\)-Lipschitz gradient, satisfying
    \(
        \lambda \le m \le M \le L,
    \)
    and denote
    \(
        \hat{x} = \argmin{\R^d} g.
    \)
    Let \(\Pi\) be the target measure with density
    \(
        \pi \propto e^{-f},
    \)
    let \(\mu\) be a \(\beta\)-warm start w.r.t.\
    \(\Pi\), and fix an error tolerance \(\delta \in (0,1)\)
    satisfying
    \(
        \log\frac{2\beta}{\delta} \le d.
    \)
    Define the ball
    \(
        \mc{K}
            \coloneqq \mbb{B}\bigl(
                x^\star,\,3\sqrt{d / \lambda}
            \bigr),
    \)
    and assume
    \(
        \hat x\in\mc{K}.
    \)
    Suppose the following hold:
    \begin{enumerate}[label=(\roman*)]
        \item
            \emph{Gradient fidelity.}
            \begin{equation}
                \label{eq:fidelityCondition}
                \sup_{x \in \mc{K}} \enorm{
                    \nabla f(x) - \nabla g(x)
                }^2
                    \lesssim L \max\{ 1,~\kappa / d\}.
            \end{equation}
        \item
            \emph{Localisation.} The DART parameters satisfy
            \begin{equation}
                \label{eq:localisationCondition}
                \theta=\frac{1}{2}
            \quad \text{and} \quad
                \gamma \simeq \, L \max\{ \kappa,~d\}.
            \end{equation}
        \item \emph{Root chain mixing.}
            The root chains
            \(
                \{Q_x\}_{x \in \R^d}
            \)
            satisfy Condition~\ref{cnd:root}, each initialised at
            the Gaussian measure
            \(
                \mu_x = \mc{N}\big(x,\,(2(\gamma + \theta M))^{-1}\mbb{I}\big)
            \)
            of Lemma~\ref{lem:surrogateWarmness}, and the root chain
            lengths satisfy
            \begin{equation}
                \label{eq:surrogateStepsCondition}
                n \gtrsim d^{\omega}
                    \Big(d + \log\tfrac{\kappa}{\delta}\Big).
            \end{equation}
    \end{enumerate}
    Then the \(\delta\)-mixing time of the DART chain satisfies
    \begin{equation}    \label{eq:mixingTimeBound}
        t_{\delta}(\mu)
            \lesssim \kappa \max\{ \kappa,~d\}
            \,\log\!\frac{2\beta}{\delta}.
    \end{equation}
\end{theorem}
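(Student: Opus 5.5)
The proof follows the conductance route of \cite{dwivedi2019log}, built on \eqref{eq:lovaszBound}. We first treat the idealised chain \(\hat T\) that proposes exactly from \(\Pi_x\) and accepts with \eqref{eq:dartAcceptance}; this chain is reversible with respect to \(\Pi\). Afterwards we pass to the implemented chain \(T\), whose proposal is \(Q_x^n\mu_x\), by a perturbation argument. The goal is an \(s\)-conductance bound \(\Phi_s(\hat T)\gtrsim (\kappa\max\{\kappa,d\})^{-1/2}\) with \(s\asymp\delta/\beta\). Inserting this into \eqref{eq:lovaszBound} gives the claimed bound \(\kappa\max\{\kappa,d\}\log(2\beta/\delta)\). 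The conductance bound itself rests on two facts.

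\emph{(a) Overlap of proposals.} For \(x,y\in\mc{K}\) with \(\enorm{x-y}\lesssim\gamma^{-1/2}\) we want \(\tvnorm{\Pi_x-\Pi_y}\le 1/4\). Here \(V_x\) is \((\gamma+\theta m)\)-strongly convex and \((\gamma+\theta M)\)-smooth. Since \(\gamma\ge L\ge M\), its condition number is \(O(1)\); this is the bound on \(\tilde\kappa\). We bound the KL divergence by comparing \(\Pi_x\) with its shift to \(y\). The shift changes the potential by a term linear in \(\gamma(x-y)\) plus a term coming from \(g\), and a log-Sobolev or Talagrand inequality with constant \(\gamma\) gives \(\klDiv{\Pi_x}{\Pi_y}\lesssim\gamma\enorm{x-y}^2\). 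Pinsker's inequality then yields the overlap.

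\emph{(b) Acceptance.} We want \(\exv_{z\sim\Pi_x}\alpha(x,z)\ge 1/2\) uniformly for \(x\) in a convex set \(\mc{K}'\subset\mc{K}\) with \(\Pi(\mc{K}')\ge 1-s\). Concentration of the strongly log-concave \(\Pi\) around \(x^\star\) supplies such a set, because \(3\sqrt{d/\lambda}\) exceeds the typical radius \(\sqrt{d/\lambda}\). The log of the ratio is \(f(x)-f(z)+\theta(g(z)-g(x))+\log N_x-\log N_z\). For the normaliser term, \(\nabla_x\log N_x=-\gamma(x-\bar u_x)\). Combined with the Stein identity \eqref{eq:steinIdentity}, this gives
\[
  \log N_x-\log N_z=\int_0^1 \theta\iprod{\exv_{\Pi_{x_t}}\nabla g,\;z-x}\d{t},
  \qquad x_t \coloneqq x+t(z-x).
\]
We then Taylor expand \(f\) and \(g\) along the segment from \(x\) to \(z\). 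The first-order terms reduce to \((1-\theta)\iprod{\nabla f, x-z}\) plus corrections of size \(\enorm{\nabla f-\nabla g}\,\enorm{z-x}\). The choice \(\theta=1/2\) balances these terms to match the MALA acceptance structure. The remaining pieces are the averaging errors \(\exv_{\Pi_{x_t}}\nabla g-\nabla g(x_t)\), of size \(M\sqrt{d/\gamma}\), and the curvature terms \(L\enorm{z-x}^2\).

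Under \(\Pi_x\), the displacement satisfies \(\enorm{z-x}\lesssim\sqrt{d/\gamma}+\gamma^{-1}\enorm{\nabla g(x)}\), and on \(\mc{K}\) we have \(\enorm{\nabla g(x)}\lesssim L\sqrt{d/\lambda}\). With \(\gamma\simeq L\max\{\kappa,d\}\) and the fidelity bound \eqref{eq:fidelityCondition}, every error term becomes \(O(1)\) with high probability over \(z\). This gives the acceptance bound, which is the standard route to \(s\)-conductance. Together, (a) and (b) imply \(\tvnorm{\hat{\mc T}_x-\hat{\mc T}_y}\le 1-c\) whenever \(\enorm{x-y}\lesssim\gamma^{-1/2}\). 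The isoperimetric inequality for \(\lambda\)-strongly log-concave measures restricted to \(\mc{K}'\) then gives \(\Phi_s\gtrsim\sqrt{\lambda/\gamma}\), and squaring yields the rate \(\gamma/\lambda=\kappa\max\{\kappa,d\}\).

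To pass to the implemented chain, the two kernels differ by at most \(\varepsilon_n\coloneqq\sup_{x\in\mc{K}}\tvnorm{Q_x^n\mu_x-\Pi_x}\) on \(\mc{K}\). Lemma~\ref{lem:surrogateWarmness} gives \(\log\beta_x\lesssim d\) on \(\mc{K}\). Condition~\ref{cnd:root} with \(\tilde\kappa=O(1\)) and \eqref{eq:surrogateStepsCondition} then give \(\varepsilon_n\le\delta/(\kappa\max\{\kappa,d\})\) up to constants. A telescoping bound over \(t_\delta\) steps, together with the negligible escape mass outside \(\mc{K}\), keeps the accumulated error below \(\delta/2\). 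The \(\log\kappa\) appearing in \(n\) pays for exactly this.

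I expect step (b) to be the main obstacle. It requires controlling the normalising constants \(N_x/N_z\) and the Stein-averaged gradients precisely enough that the cancellation of first-order terms, which is what allows the larger step \(\gamma^{-1}\sim\) MALA step, actually occurs. Everything else is an adaptation of \cite{dwivedi2019log}.
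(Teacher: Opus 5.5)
\textbf{The gap: the escape mass in your transfer step.} Telescoping as in Lemma~\ref{lem:chainPerturbation} with $X=\mc{K}$ leaves the term $\beta N_\delta\,\Pi(\cmpl{\mc{K}})$, where $N_\delta\asymp\kappa\max\{\kappa,d\}\log(2\beta/\delta)$ is the number of outer steps. This term is not negligible, because $\Pi(\cmpl{\mc{K}})$ is only $e^{-O(d)}$ and does not shrink with $\kappa$. Take a Gaussian target with covariance $\diag(\lambda^{-1},L^{-1},\dots,L^{-1})$. Then $\Pi(\cmpl{\mc{K}})\ge\pr(\xi^2\ge 9d)$ with $\xi\sim\mc{N}(0,1)$, a constant depending on $d$ alone, while $N_\delta\gtrsim\kappa^2$. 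Fix $d,\beta,\delta$ and let $\kappa\to\infty$, which is exactly the dimension-free regime the theorem advertises. Your accumulated escape error is then unbounded, and you have no control of the root chain off $\mc{K}$ to compensate.

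The missing idea is to telescope over the enlarged ball $\mc{K}_N=\mbb{B}\big(x^\star,\,r(\delta/(4\beta N_\delta),d)\sqrt{d/\lambda}\big)$, for which $\beta N_\delta\,\Pi(\cmpl{\mc{K}_N})\le\delta/4$. This is legitimate because the kernel gap $\tvnorm{\mc{T}_x-\ideal{\mc{T}}_x}\le\tvnorm{Q_x^n\mu_x-\Pi_x}$ holds at every $x$, so no fidelity or acceptance bound is needed outside $\mc{K}$. On $\mc{K}_N$, however, your estimate $\log\beta_x\lesssim d$ must be replaced by $\log\beta_x\lesssim d+\log N_\delta$. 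This follows from Lemma~\ref{lem:surrogateWarmness} with $\enorm{x-\hat x}^2\lesssim(d+\log N_\delta)/\lambda$ and $\theta^2M^2/\gamma\lesssim L/\max\{\kappa,d\}$. Feeding this bound and the accuracy $\delta/(2N_\delta)$ into Condition~\ref{cnd:root} is what produces \eqref{eq:surrogateStepsCondition}. So the $\log\kappa$ in $n$ pays for the degraded warmness on the enlarged region as well as for the per-step accuracy.

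\textbf{What is sound, and two slips in (b).} Apart from this gap, your architecture coincides with the paper's, and your variants work.
In (a), the log-Sobolev inequality for $\Pi_y$ has constant $\gamma+\theta m$, and $\nabla(V_y-V_x)\equiv\gamma(x-y)$. Together these give exactly the paper's bound $\klDiv{\Pi_x}{\Pi_y}\le\frac{\gamma^2}{2(\gamma+\theta m)}\enorm{x-y}^2$; no shift and no $g$-term are needed.
In (b), your exact path identity for $\log N_x-\log N_z$ is a two-sided substitute for the one-sided Harg\'e bound of Lemma~\ref{lem:normConstRatioBound}. That lemma is just the tangent inequality for the concave map $x\mapsto\log N_x$.

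Two slips in (b) need correcting.
\emph{The first-order coefficient.} The first-order terms collect to $(1-2\theta)\iprod{\nabla f(x),x-z}$ plus fidelity corrections, not $(1-\theta)\iprod{\nabla f(x),x-z}$. The reason is that the surrogate correction and the normaliser each contribute $\theta\iprod{\nabla g(x),z-x}$. Only the coefficient $1-2\theta$ vanishes at $\theta=1/2$. With your coefficient, a residual $\tfrac12\iprod{\nabla f(x),x-z}$ would survive. Its mean is $\approx\enorm{\nabla f(x)}^2/(4\gamma)$, which reaches order $\min\{\kappa,d\}$ on $\mc{K}$.
\emph{The acceptance threshold.} The target $\exv\alpha\ge1/2$ is too weak. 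The rejection atoms at $x\neq y$ are disjoint and the accepted parts need not overlap, so the only available bound is $\tvnorm{\ideal{\mc{T}}_x-\ideal{\mc{T}}_y}\le2\big(1-\inf_{\mc{K}}\exv\alpha\big)+\tvnorm{\Pi_x-\Pi_y}$. With overlap term $1/4$, this is non-trivial only if $\inf_{\mc{K}}\exv\alpha>5/8$. The paper uses $7/8$, and your estimates deliver that once the constant in $\gamma$ is enlarged.
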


When proposals are drawn from \(\Pi_x\) exactly, the perturbation
term \(\tvnorm{\Pi_x - Q_x^n \mu_x}\) vanishes, the chain satisfies
detailed balance with respect to \(\Pi\), and the mixing time bound
follows from conditions~(i) and~(ii) alone.
\begin{corollary}   \label{cor:exactSampling}
    Under the hypotheses of Theorem~\ref{thm:mixingTime},
    suppose that conditions~\textnormal{(i)} and
    \textnormal{(ii)} hold, and that proposals are drawn
    exactly from~\(\Pi_x\). Then
    Condition~\ref{cnd:root} and
    condition~\textnormal{(iii)} are not required, and
    the mixing time bound~\eqref{eq:mixingTimeBound} holds.
\end{corollary}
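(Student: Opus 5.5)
The statement is Corollary~\ref{cor:exactSampling}. My plan is to rerun the $s$-conductance argument behind Theorem~\ref{thm:mixingTime} with the perturbation term removed. When $z \sim \Pi_x$ is drawn exactly, the acceptance probability \eqref{eq:dartAcceptance} is precisely the Metropolis--Hastings ratio \eqref{eq:mhAcceptance} for proposal densities $p_x = \pi_x$. The identity \eqref{eq:potentialSymmetry} gives
\[
\frac{\pi_z(x)}{\pi_x(z)} = \frac{N_x}{N_z}\, e^{\theta (g(z)-g(x))},
\]
and $\pi_x > 0$ everywhere. So the chain is reversible with respect to $\Pi$, and $\Pi$ is invariant. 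This lets me apply the Lov\'asz--Simonovits bound \eqref{eq:lovaszBound} directly to the DART kernel $T$, with no comparison chain and no accumulated $\tvnorm{\Pi_x - Q_x^n\mu_x}$ error.

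It remains to lower-bound $\Phi_s(T)$ using only (i) and (ii). I would follow the standard route of \cite{dwivedi2019log, lovasz1999hit}, which I expect is also the route the proof of Theorem~\ref{thm:mixingTime} in Section~\ref{sec:proof} takes for its idealised chain.

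\textbf{Step 1 (restriction to a high-probability set).} By strong log-concavity, $\Pi(\mc{K}) \ge 1 - s$ for $s \asymp \delta/\beta$, possibly after enlarging $\mc{K}$ by constants. The hypothesis $\log(2\beta/\delta) \le d$ ensures the radius $3\sqrt{d/\lambda}$ suffices.

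\textbf{Step 2 (isoperimetry).} I invoke the log-concave isoperimetric inequality for $\Pi$ restricted to $\mc{K}$, with constant $\asymp \sqrt{\lambda}$ relative to Euclidean distance.

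\textbf{Step 3 (overlap of transition kernels).} This is the core of the argument. I need to show that if $x, y \in \mc{K}$ and $\enorm{x-y} \lesssim 1/\sqrt{\gamma}$, then $\tvnorm{\mc{T}_x - \mc{T}_y} \le 1 - c$. I split this as
\[
\tvnorm{\mc{T}_x - \mc{T}_y} \le \tvnorm{\Pi_x - \Pi_y} + \Big(1 - \inf_{\mc{K}} \mathbb{E}_{z\sim\Pi_x}\alpha(x,z)\Big).
\]
The first term is bounded by Pinsker and a KL estimate between the strongly log-concave densities $\pi_x$ and $\pi_y$. Their potentials differ by a linear term $\gamma\iprod{y-x,\cdot}$ plus a constant, and $\Pi_x$ is $(\gamma + \theta m)$-strongly log-concave, so the KL is of order $\gamma\enorm{x-y}^2$, which is small.

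\textbf{Step 4 (acceptance rate).} This is the main obstacle: showing $\mathbb{E}_{z\sim\Pi_x}\alpha(x,z) \ge \tfrac12$ for all $x \in \mc{K}$ outside a set of small mass. I would first show that $\Pi_x$ concentrates in a ball of radius $\sqrt{d/\gamma}$ around its mean, which follows from $\tilde\kappa = O(1)$ because $\gamma \gg \theta M$. I would then write
\[
-\log\frac{\pi(z)\, N_x\, e^{\theta(g(z)-g(x))}}{\pi(x)\, N_z}
\]
using three ingredients:
\begin{itemize}
\item $f(z) - f(x)$, expanded to second order, giving $\iprod{\nabla f(x), z-x} + O(L\enorm{z-x}^2)$;
\item $\theta(g(z) - g(x))$, expanded the same way;
\item $\log(N_x/N_z)$, computed by differentiating $x \mapsto \log N_x$. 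This derivative equals $-\gamma(x - \bar u_x) = -\theta\,\mathbb{E}_{\Pi_x}\nabla g$ by the Stein identity \eqref{eq:steinIdentity}.
\end{itemize}
With $\theta = \tfrac12$, the linear terms combine into $\iprod{\nabla g - \nabla f, z-x}$ plus a term that cancels in expectation against the drift. The fidelity condition (i) then bounds the leftover by
\[
\enorm{\nabla f-\nabla g}\sqrt{d/\gamma} \lesssim \sqrt{L\max\{1,\kappa/d\}\, d/(L\max\{\kappa,d\})} \lesssim 1,
\]
and the quadratic remainders are $L d/\gamma \lesssim 1$ by (ii). The hypothesis $\hat x \in \mc{K}$ controls $\enorm{\nabla g(x)}$ on $\mc{K}$, which is needed for the drift $\bar u_x - x$ to stay of order $\sqrt{d/\gamma}$.

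\textbf{Step 5 (conclusion).} Combining Steps 2--4 gives $\Phi_s \gtrsim \sqrt{\lambda/\gamma} \asymp \big(\kappa\max\{\kappa,d\}\big)^{-1/2}$. Inserting this into \eqref{eq:lovaszBound} with $s = \delta/(2\beta)$ yields \eqref{eq:mixingTimeBound}. Condition~\ref{cnd:root} and (iii) never enter, since the root chain is absent.
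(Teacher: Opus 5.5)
Your plan follows the paper's own argument. The corollary is the first stage of the proof of Theorem~\ref{thm:mixingTime}: the idealised chain \(\ideal{T}\) with exact proposals from \(\Pi_x\) is \(\Pi\)-reversible, and it is handled by concentration on \(\mc{K}\), the conductance bound of Proposition~\ref{prp:conductanceLowerBound} packaged in Lemma~\ref{lem:generalMixingBound}, TV-continuity of \(x\mapsto\Pi_x\) (Lemma~\ref{lem:surrogateContinuity}) and the acceptance bound of Lemma~\ref{lem:temperedAcceptance}, ending at \eqref{eq:idealisedMixing}. Your two variants are fine in principle: bounding the KL through strong log-concavity of \(\Pi_x\) instead of Harg\'e, and getting the linear part of \(\log(N_z/N_x)\) from \(\nabla_x\log N_x=\gamma(\bar{u}_x-x)\). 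However, Step~3 does not go through as written, because the rejection term carries a factor \(2\). Writing \(\mc{T}_x=\alpha(x,\cdot)\,\pi_x+(1-\exv_{\Pi_x}\alpha)\,\delta_x\) and passing through \(\Pi_x\) and \(\Pi_y\) gives \(\tvnorm{\mc{T}_x-\mc{T}_y}\le\tvnorm{\Pi_x-\Pi_y}+2\big(1-\inf_{\mc{K}}\exv_{z\sim\Pi_x}\alpha(x,z)\big)\), exactly as in \eqref{eq:idealisedDecomposition}. The factor cannot be dropped in general: take identical proposal laws with rejection on disjoint sets of equal mass. With your target \(\exv\alpha\ge\tfrac12\), the overlap bound is therefore vacuous. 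You need acceptance bounded below by a constant strictly above \(\tfrac12\); the paper takes \(\tfrac78\), i.e.\ \(\varepsilon=\tfrac18\) in Lemma~\ref{lem:temperedAcceptance}. Your Step~4 estimates give this once the implicit constant in \eqref{eq:localisationCondition} is large relative to the one in \eqref{eq:fidelityCondition}. Note also that \eqref{eq:lovaszBound} is applied to the \(\tfrac12\)-lazy kernel \(\tfrac12\delta_x+\tfrac12\mc{T}_x\). This weakens the overlap constant (\(\tfrac12\mapsto\tfrac34\), Appendix~\ref{app:aux}), and the chain whose mixing time you bound is the lazy one.

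The second point is the normalising-constant term in Step~4. A first-order expansion of \(x\mapsto\log N_x\) needs a remainder bound, and you do not supply one. The Hessian of \(\log N_x\) is \(\gamma^2\,\mathrm{Cov}_{\Pi_x}-\gamma\,\mbb{I}\), a difference of two terms of size \(\gamma\), while \(\gamma\enorm{z-x}^2\asymp d\) under \(\Pi_x\). The estimate survives only because these two terms cancel to leading order. The missing ingredient is the covariance bound \(\mathrm{Cov}_{\Pi_x}\preceq(\gamma+\theta m)^{-1}\mbb{I}\) for the \((\gamma+\theta m)\)-strongly log-concave measure \(\Pi_x\) (Brascamp--Lieb). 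It makes \(\log N_x\) concave in \(x\) and yields the one-sided bound \(\log(N_z/N_x)\le\gamma\iprod{x-\bar{u}_x,\,x-z}\), which is exactly Lemma~\ref{lem:normConstRatioBound}; the paper proves it via Harg\'e's inequality. Finally, the leftover term \(\tfrac12\iprod{\nabla g(x)-\exv_{\Pi_x}\nabla g,\,z-x}\) does not cancel in expectation. No such cancellation may be used anyway, since you must control \(\exv A_+\) rather than \(\exv A\). The term is simply bounded by \(\tfrac{M}{2}\exv\enorm{z-x}^2\lesssim Ld/\gamma\), as in \eqref{eq:normWBound}--\eqref{eq:expectedA}.
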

\begin{remark}
    \label{rmk:cosmetic}
    The condition \(\log(2\beta/\delta) \le d\) is cosmetic. 
    Removing it introduces a dependence on \(r(\delta/(4\beta), d)\)
    (see \cite[Theorems~1~and~2]{dwivedi2019log} and \eqref{eq:uniformRBound}),
    which increase only very slowly with decreasing error tolerance.
    As stated, Theorem~\ref{thm:mixingTime} restricts the
    mixing times to error tolerances \(\delta \gtrsim e^{-d}\).
    Furthermore, our results can be extended to weakly log-concave
    and perturbed log-concave targets using the reduction employed in
    \cite{dwivedi2019log}, ensuring algebraic mixing time bounds
    for those targets too. Since the arguments translate
    verbatim, we omit these computations here.
\end{remark}

\begin{remark}
    \label{rmk:twoRegimes}
    The mixing time bound \eqref{eq:mixingTimeBound} reveals two
    distinct regimes. If \(\kappa \ge d\), then
    \(t_{\delta}(\mu) \in \mc{O}(\kappa^2)\), with no explicit
    dependence on the dimension \(d\). If \(d \ge \kappa\), the
    bound becomes
    \(t_{\delta}(\mu) \in \mc{O}(\kappa d)\), recovering the
    mixing time bound for MALA in
    \cite[Theorem~1]{dwivedi2019log}.
    As shown in Lemma~\ref{lem:temperedAcceptance}, these regimes
    emerge from the localisation having to control two competing
    effects. The \(\mc{O}(\kappa^2)\) scaling stems from the
    stronger localisation required to control the displacement of
    the surrogate mode from the current state in the presence of
    steep target gradients, requiring
    \(\gamma \gtrsim L \kappa\).
    The \(\mc{O}(\kappa d)\) regime arises from the impact
    of the natural fluctuations of \(z \sim \Pi_x\) on the
    acceptance probability and can be controlled by
    \(\gamma \gtrsim L d\).
\end{remark}

\begin{remark}
    \label{rmk:rootChainSampler}
    Using Markov chains to draw proposals
    \(
      z \sim \mc{P}_x = Q_x^{n}\mu_x \approx \Pi_x
    \)
    is a natural choice and allows the proposal quality to be quantified
    in terms of the chain length \(n\). The proof in
    Section~\ref{sec:proof}, however, requires only
    \(
      \tvnorm{\mc{P}_x - \Pi_x} \le \delta/(2 N_\delta)
    \)
    on \(\mc{K}_N\), as seen in \eqref{eq:perturbationSplit}, regardless
    of how the proposal measure is constructed. Any family of samplers
    whose single-draw law satisfies this bound yields the same mixing
    time bound as Theorem~\ref{thm:mixingTime}, with the acceptance
    step~\eqref{eq:dartAcceptance} unchanged. Exact sampling, as in
    Corollary~\ref{cor:exactSampling}, is one such instance. The inner
    sampler can therefore be chosen on purely computational grounds,
    provided the required total variation accuracy can be established,
    in which case the mixing analysis carries over without modification.
\end{remark}

        \subsection{Practical Aspects}       
            \label{sec:dart:practice}
            Theorem~\ref{thm:mixingTime} is in a strongly regularised
regime. This is deliberate, as the resulting strong localisation
confines proposals to a neighbourhood of the current state,
where gradient fidelity is the dominant concern.
In practice, substantially smaller values of \(\gamma\) are
effective, for two reasons.
First, the target's own strong convexity
already provides a degree of localisation that the analysis
neglects. In the proof, the localisation term is taken to
dominate the curvature of \(f\)
(cf.\ Lemmas~\ref{lem:surrogateWarmness}
and~\ref{lem:temperedAcceptance}), but in practice the two
curvatures compound, and a smaller \(\gamma\) suffices.
Second, the fidelity condition~\eqref{eq:fidelityCondition} only
controls first-order information. If the surrogate also captures
curvature information of the target, the localisation would
only need to compensate for the remaining curvature mismatch.
The theorem should therefore be read as a rigorous
illustration of the principle that localised surrogate
transitions can extract gradient-level geometric information
from the surrogate, rather than as a prescription of optimal
algorithmic parameters.

The acceptance probability \eqref{eq:dartAcceptance} involves
the ratio \(N_x / N_z\) of normalisation constants. When
\(g\) is quadratic, say
\(g(y) = \tfrac{1}{2}(y - \hat{x})^\top A \, (y - \hat{x})\)
for a positive-definite matrix~\(A\), the surrogate
density \(\pi_x\) is Gaussian with precision
\(\theta A + \gamma \mbb{I}\). The normalisation constant
\(N_x\) then follows from the determinant of this precision,
and the ratio \(N_x / N_z\) is available in closed form.
This covers, for instance, Laplace approximations, where \(A\) is the Hessian
at the MAP, and the high-frequency components of the DNA
surrogate in Section~\ref{sec:sglmm}, where \(A\) is a
diagonal matrix of prior precision eigenvalues.

When \(g\) is not quadratic, the ratio must be estimated. The
difference of potentials
\begin{equation}    \label{eq:exponentLinearity}
    V_{z}(v) - V_{x}(v)
        = \gamma \iprod{v - \tfrac{x+z}{2},\, x-z}
\end{equation}
is linear in \(v\), since the quadratic terms cancel.
This allows the reciprocal ratio to be expressed as
\begin{equation}    \label{eq:normConstRewrite}
    \frac{N_z}{N_x}
        = \exv_{v \sim \Pi_x}\,
            e^{-\gamma \iprod{
                v - \frac{x+z}{2},\, x-z
            }}.
\end{equation}
The root chain already generates samples approximately
distributed according to \(\Pi_x\), so these can be repurposed
to estimate \eqref{eq:normConstRewrite} at no additional model
cost. Given \(\tilde{n}\) effectively independent states
\(\{v_{i}\}_{i=1}^{\tilde{n}}\) from the root chain, the estimator
\begin{equation}    \label{eq:isEstimator}
    \hat{R}_{\mrm{IS}}^{-1}(x, z)
        \coloneqq \frac{1}{\tilde{n}}
            \sum_{i=1}^{\tilde{n}} e^{
                -\gamma \iprod{v_i - \frac{x+z}{2},\, x-z}
            }
\end{equation}
is unbiased for \(N_z / N_x\), requires no additional target
evaluations, and is exact when \(x = z\). Taking the
reciprocal
\(\hat{R}_{\mrm{IS}}(x,z) \coloneqq
    1 / \hat{R}_{\mrm{IS}}^{-1}(x,z)
\)
introduces an upward bias of order
\(\mc{O}(1/\tilde{n})\) by Jensen's inequality.

Equation~\eqref{eq:isEstimator} is the empirical mean of \(e^{-\ell}\), with
\(\ell(v) \coloneqq \gamma \iprod{v - \tfrac{x+z}{2},\, x-z}\).
A second estimator follows from approximating \(\Pi_x\) by the
Gaussian derived from the chain's empirical first two cumulants.
This too costs no further evaluations and gives
\begin{equation}    \label{eq:cumulantEstimator}
    \log \hat{R}^{-1}_{\mrm{G}}(x, z)
        \coloneqq -\bar{\ell} + \tfrac12 s_\ell^2,
\end{equation}
with the sample mean \(\bar{\ell}\) and variance \(s_{\ell}^2\). It estimates the
log-ratio directly, avoiding the exponentiation and the reciprocal
step that produced the Jensen bias of \eqref{eq:isEstimator}.

The two estimators trade consistency against efficiency.
Estimator \eqref{eq:cumulantEstimator} is exact whenever
\(\Pi_x\) is Gaussian, recovering the closed-form quadratic case,
and returns zero at \(x = z\). For skewed, non-Gaussian \(\Pi_x\) it
retains a bias that does not vanish as \(\tilde{n} \to \infty\).
The importance-sampling estimator \eqref{eq:isEstimator} is
instead consistent, at higher variance and with less stable
exponential weighting. The two estimators are compared
empirically in Section~\ref{sec:numerics}.

When the surrogate \(g\) is itself a Bayesian posterior with
a known Gaussian prior of precision \(C^{-1}\), the full Gaussian
component of \(V_x\) has precision \(\theta \, C^{-1} + \gamma \mbb{I}\),
providing a natural pCN reference measure \cite{cotter2013mcmc}. Tempering
only the likelihood component of \(g\) in such instances, while leaving the
prior at full strength, preserves the prior geometry and ensures that
the localisation remains solely responsible for confining
proposals. This is employed in Sections~\ref{sec:numerics}
and~\ref{sec:sglmm}.

In practice, we initialise the root chain at the current outer
state, setting \(\mu_x = \delta_x\). Although \(\delta_x\) is
not a warm start for \(\Pi_x\), the transition operator
\(Q_x\) yields an absolutely continuous measure
\(Q_x \delta_x\) after one step, which, provided the
localisation is sufficient, concentrates adequately to serve as
an approximately warm start. We also implement a \emph{crank-up}
procedure: an auxiliary chain targeting \(\widetilde{\pi} \propto e^{-g}\)
is run for a fixed number of steps, and its terminal state is used as
the initial state of the outer chain, giving \(\mu \approx \widetilde{\Pi}\).

    \section{Empirical Behaviour}
        \label{sec:numerics}
        The experiments in Sections~\ref{sec:numerics} and~\ref{sec:sglmm}
rely on \texttt{styne}, a standalone Python library available at
\url{https://github.com/rkutri/styne}. The scripts reproducing all
experiments and figures in this paper reside under
\texttt{reproducibility/dart} in release~\texttt{v0.1.0}, browsable at
\url{https://github.com/rkutri/styne/tree/v0.1.0}.

        \subsection{Stability of the Ratio Estimators}
            \label{sec:numerics:stability}
            We isolate the accuracy of the estimation step on a two-component
Gaussian mixture with unequal weights. The projected law of the root
chain samples is skewed, so the experiment exercises
\(\hat R_{\mathrm{G}}\) outside the Gaussian regime in which it is
exact.
We set \(\theta = 1\), since tempering would destroy the mixture form
and with it the analytic ground truth. Localisation is therefore
controlled by \(\gamma\) alone.

We fix \(x\) at the mode of the dominant component, draw independent
realisations of \(z \sim Q_x^n \mu_x\), where \(Q_x\) is MALA on
\(\Pi_x\) initialised at \(\mu_x = \delta_x\), and compute
\(\hat R_{\mathrm{IS}}(x, z)\) and \(\hat R_{\mathrm{G}}(x, z)\) from
the same trajectory.

Figure~\ref{fig:stability} reports the absolute error in
\(\log(N_z/N_x)\) as a function of root
chain length~\(n\), across dimensions and localisation strengths. As
\(\gamma\) grows, \(\Pi_x\) and \(\Pi_z\) tighten and their overlap
shrinks, so the ratio is harder to estimate, a difficulty that
compounds with dimension. The two estimators agree while sampling is cheap.
Once the log-weight variance grows, \(\hat R_{\mathrm{IS}}\) degrades
under large weights while \(\hat R_{\mathrm{G}}\) does not. This
is the trade-off of Section~\ref{sec:dart:practice} made visible, and
it is why we use \(\hat R_{\mathrm{G}}\) as the default estimator.
While \(\hat R_{\mathrm{G}}\) is more reliable in this experiment,
it is expected to be more vulnerable to surrogate geometries that differ more
strongly from its Gaussian approximation.

How much residual estimation noise
affects DART's overall mixing depends on the relative sizes of all
terms in \eqref{eq:dartAcceptance}, not on the ratio error alone. The
subsequent experiments confirm that, once \(\gamma\) is chosen
appropriately, DART improves over its non-localised counterparts.

\begin{figure}[!htbp]
    \centering
    \includegraphics[width=\textwidth]{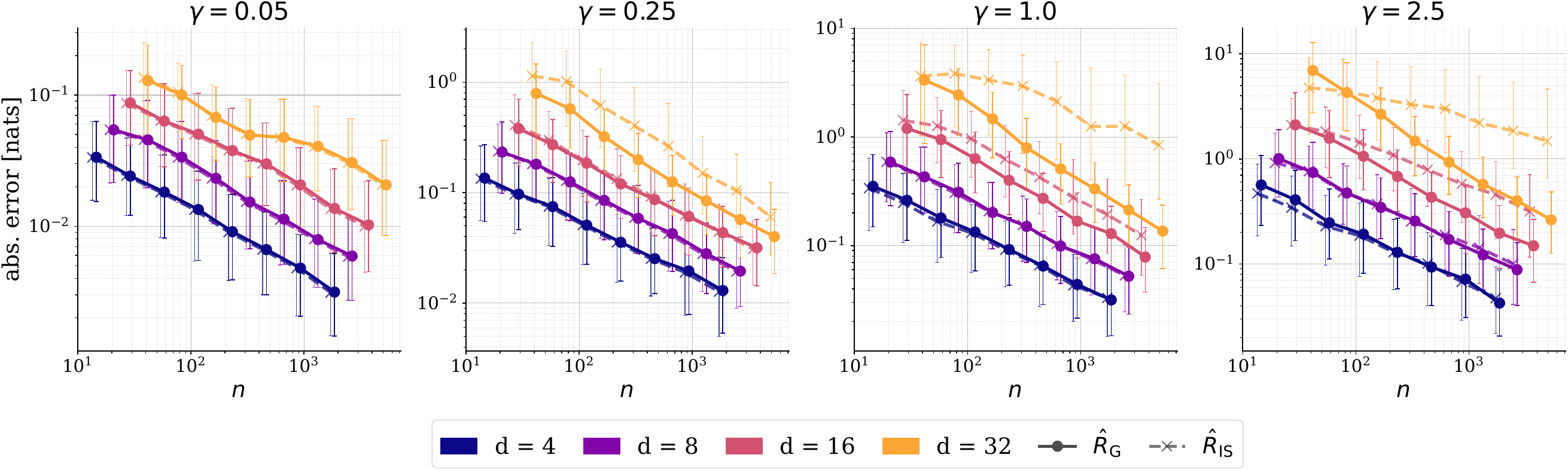}
    \caption{
        Median absolute error in \(\log(N_z/N_x)\), with
        interquartile range over \(10\,000\) independent runs, for
        \(\hat{R}_{\mrm{IS}}\) (dashed) and \(\hat{R}_{\mrm{G}}\)
        (solid) as a function of root chain length~\(n\). The
        mixture has weights \((0.7, 0.3)\), means
        \(\pm a\mathbb{1}\) with \(a = 0.5\), common covariance
        \(\sigma^2 \mbb{I}\) with \(\sigma = 1\), and \(\theta = 1\).
        The root chain is MALA on \(\Pi_x\) initialised at
        \(\mu_x = \delta_x\), burn-in fraction \(0.3\), no thinning.
    }
    \label{fig:stability}
\end{figure}

        \subsection{The Quadratic Surrogate Regime}
            \label{sec:numeric:quadratic}
            We now test the full DART chain in the regime of
Corollary~\ref{cor:exactSampling}, where the surrogate \(g\) is
quadratic and proposals can be drawn from \(\Pi_x\) directly.
Following \cite{dalalyan2017theoretical, dwivedi2019log}, we consider
a Bayesian logistic regression example. Given \(n_{\mrm{obs}}\) observations
\((c_i, y_i) \in \R^d \times \{0,1\}\), the probability that
\(y_i = 1\) is modelled as \(\sigma(\iprod{x, c_i})\), with
\(\sigma(t) = (1 + e^{-t})^{-1}\). Covariates \(c_i\) have i.i.d.\
Rademacher entries and unit Euclidean norm and responses are drawn from a
fixed data-generating parameter \(x^{\star}\). The prior is
\(\mc{N}(0, (\alpha\Sigma_C)^{-1})\) with
\(\Sigma_C = \tfrac{1}{n_{\mrm{obs}}} C^\top C\) and parameter
\(\alpha\), whose value can be chosen freely.

After preconditioning by \(\Sigma_C^{-1/2}\), the prior becomes
\(\mc{N}(0, \alpha^{-1}\mbb{I}_d)\), the preconditioned design matrix
satisfies \(\tilde{C}^\top \tilde{C} = n_{\mrm{obs}}\mbb{I}_d\), and the
negative log-posterior is
\begin{equation}\label{eq:logisticTarget}
  f(x) = -Y^\top \tilde{C}\, x + \sum_{i=1}^{n_{\mrm{obs}}}
  \log\big(1 + e^{\iprod{x, \tilde{c}_i}}\big)
  + \frac{\alpha}{2} \enorm{x}^2,
\end{equation}
with gradient Lipschitz constant \(L = \tfrac{1}{4}n_{\mrm{obs}} +
\alpha\), strong convexity constant \(\lambda = \alpha\), and condition
number \(\kappa = L/\lambda\), independent of the spectrum of the
original design matrix. With \(n_{\mrm{obs}}\) and \(\alpha\) fixed,
both \(L\) and \(\kappa\) are the same at every dimension.

As surrogate for \(f\) we use the Laplace approximation at the MAP
\(\hat{x} = \argmin{\R^d} f\),
\begin{equation}\label{eq:laplaceSurrogate}
  g(y) = \tfrac{1}{2}\iprod{y - \hat{x},\; H\,(y - \hat{x})},
\end{equation}
where \(H\) is the Hessian of \(f\) at \(\hat{x}\). Since \(g\) is
quadratic, the localised surrogate density is Gaussian with precision
\(P = \theta H + \gamma\mbb{I}_d\) and mean
\(P^{-1}(\theta H\hat{x} + \gamma x)\). Both \(P\) and the normalisation
ratio \(N_x / N_z\) are available in closed form, so no root chain or
ratio estimation is needed.

We compare DART against MALA, MLDA and MRW in the preconditioned space.
MALA and MRW step sizes are tuned to standard target acceptance
rates and MLDA shares DART's surrogate, but canonically, without regularisation
or tempering. The implicit step size of DART is governed by \(\gamma\) and is
swept over \(\gamma / L \in [10^{-2}, 10]\). The value at the peak of the
\enquote{Efficiency} panel of Figure~\ref{fig:logistic} is used for DART. To
assess convergence, we track the running mean diagnostic
\begin{equation}\label{eq:meanDiagnostic}
  e_k = \frac{1}{d}\norm{\bar{x}_k - x_{\mrm{ref}}}_1,
  \qquad
  \bar{x}_k = \frac{1}{k}\sum_{i=1}^{k} x_i,
\end{equation}
where \(x_{\mrm{ref}} = \Sigma_C^{1/2} x^{\star}\) is the data-generating
parameter in the preconditioned coordinates. Since every chain targets
the same posterior, \(e_k\) decays to a common floor, the distance
between the posterior mean and \(x_{\mrm{ref}}\). All chains share common
initialisations drawn from the prior. To compare stationary-phase
efficiency, we report effective sample size (ESS) per iteration along the
slowest direction of the target, the eigenvector of \(H\) with the smallest
eigenvalue. We project \(M = 8\) replicate chains, each started from an
independent prior draw, onto this direction and combine them with a
multi-chain ESS estimator that pools the within-chain autocorrelation with
the between-chain variance (c.f.~\cite{gelman1992inference}).

Figure~\ref{fig:logistic} summarises the results. MLDA employs the same surrogate
as DART but lacks localisation. As its sub-chain equilibrates, the proposals
approach independent draws from the Laplace approximation. The left panel
demonstrates that this measure lacks the fidelity required to serve the
full model. The success of this same surrogate under DART isolates the
effect of the localisation $\gamma$ in keeping each proposal local. Finally,
MRW converges slowly but reliably.

DART converges faster than MALA, without evaluating the target
gradient at any iteration. In line with the discussion in Section~\ref{sec:dart:practice},
when in a beneficial localisation regime (see middle panel in Figure~\ref{fig:logistic}),
curvature information captured by the Laplace approximation further improves DART
over MALA, which only has access to gradient information. The middle panel of Figure~\ref{fig:logistic} illustrates the
theoretical finding that the optimal localisation strength \(\gamma\) is
determined by the gradient-Lipschitz constant \(L\), the largest curvature of
the target. For this experiment, optimal ESS is achieved consistently
for \(\gamma \approx 0.2\,L\). The rightmost panel indicates the
mechanism behind this: increasing \(\gamma\) shrinks the proposal scale and raises
acceptance monotonically. Below the peak, proposals overshoot and are
rejected. Above it, proposals are accepted but take steps too small to
decorrelate efficiently.
\begin{figure}[!htbp]
  \centering
  \includegraphics[width=\textwidth]{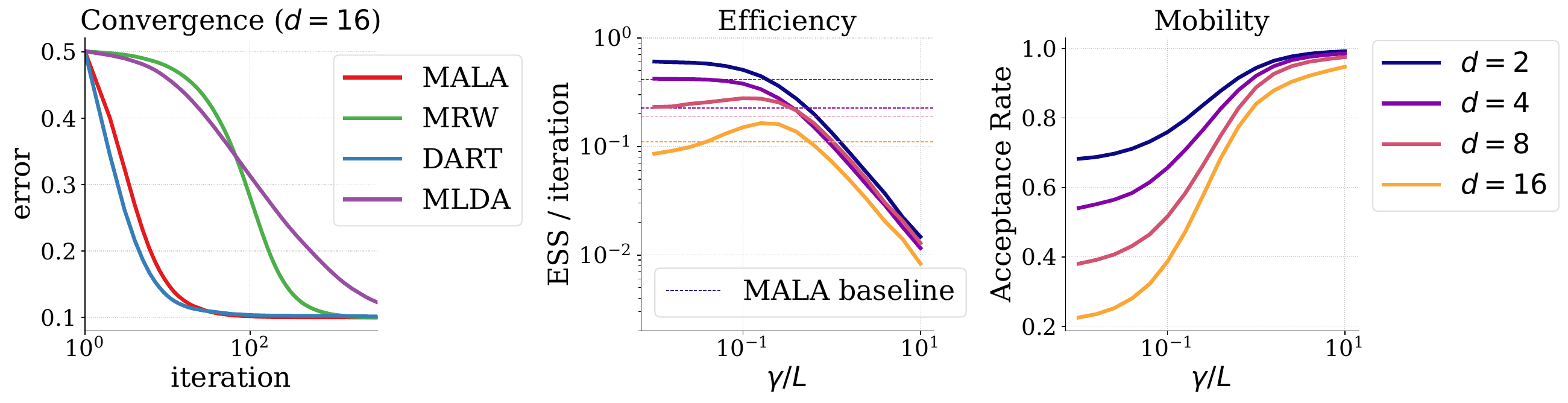}
  \caption{Bayesian logistic regression with a Laplace surrogate
  (\(n_{\mrm{obs}} = 240\), \(\alpha = 3\), giving \(\kappa = 21\);
  \(\theta = 1/2\)). (a)~Running mean error \(e_k\) at \(d = 16\) for
  MALA, MRW, MLDA, and DART at \(\gamma = 0.2\,L\), averaged over
  \(n_{\mrm{est}} = 500\) independent runs from prior initialisations.
  (b)~ESS per iteration of DART along the slowest target direction, as a
  function of \(\gamma / L\) for dimensions \(d \in \{2, 4, 8, 16\}\),
  estimated from \(M = 8\) replicate chains. Horizontal dashed lines
  show the same quantity for tuned MALA at matching dimensions.
  (c)~Acceptance rate of DART as a function of \(\gamma / L\). Target
  acceptance rates for tuning were \(0.55\) for MALA and \(0.25\) for
  MRW.}
  \label{fig:logistic}
\end{figure}

        \subsection{Multimodal Targets}
            \label{sec:numerics:multimodal}
            Departing from the log-concave setting of Theorem~\ref{thm:mixingTime},
we investigate DART's behaviour on multimodal targets. Multimodality is a
fundamental obstacle for MCMC methods that rely on local proposals, as the
chain can become trapped in a single mode for extended periods.
In DART, tempering the surrogate density reduces the effective depth of these barriers,
making transitions that the outer chain would reject accessible to the
surrogate chain. This is the same mechanism exploited by simulated
annealing, but in DART it is built into the proposal step itself,
requiring no separate tempered chains or swap moves.
When a single level of tempering is insufficient to overcome the barrier,
DART admits a recursive extension in the spirit of MLDA. Each level
employs a smaller tempering parameter, producing a progressively flatter
surrogate landscape. The resulting temperature ladder is traversed
sequentially within a single outer proposal step.

We illustrate this mechanism on two bimodal Gaussian mixture targets
with mode separation \(\Delta\) and per-mode variance \(\sigma^2\). To
isolate the barrier-crossing effect from surrogate approximation error,
we set \(g = f\) throughout. For two-level DART, each outer step then
requires \(n + 1\) target evaluations: one for the acceptance step
and \(n\) for the surrogate chain. For three-level DART it is correspondingly
even higher.
The purpose of this experiment is to demonstrate the mixing mechanism,
not to benchmark computational cost.

We compare MRW, MALA, two-level DART, and three-level DART. The DART
variants use pCN, preconditioned against the quadratic penalty term, as
the root chain. The regularisation \(\gamma\) is chosen per target so
that the localised surrogate density spans roughly the combined support
of both modes.
Figure~\ref{fig:bimodal} shows trace plots alongside the target
density (grey) and, for the DART variants, the localised surrogate
densities at each tempering level (dashed, coloured), evaluated at
\(x = 0\).
Each trace is annotated with its integrated autocorrelation time
estimate and its number of mode transitions.

On the moderate target (\(\Delta = 4\), \(\sigma^2 = 0.4\)) MRW
takes steps larger than the mode separation and tunnels across the 
barrier, transitioning freely. MALA
crosses only a handful of times, since its gradient drift pulls each
proposal toward the nearest mode. The drift that speeds within-mode
mixing impedes transitions between modes, and its autocorrelation time
is large as a result. Both DART variants cross freely. On the harder
target (\(\Delta = 8\), \(\sigma^2 = 0.2\)), the modes are too far
apart to tunnel at any feasible step size, so MRW fails alongside MALA,
each confined to a single mode. Two-level DART still transitions but
with visible periods of entrapment, while three-level DART mixes
freely, showing that the flatter additional level is what carries the
chain across the deeper barrier.
\begin{figure}[!htbp]
    \centering
    \includegraphics[width=\textwidth]{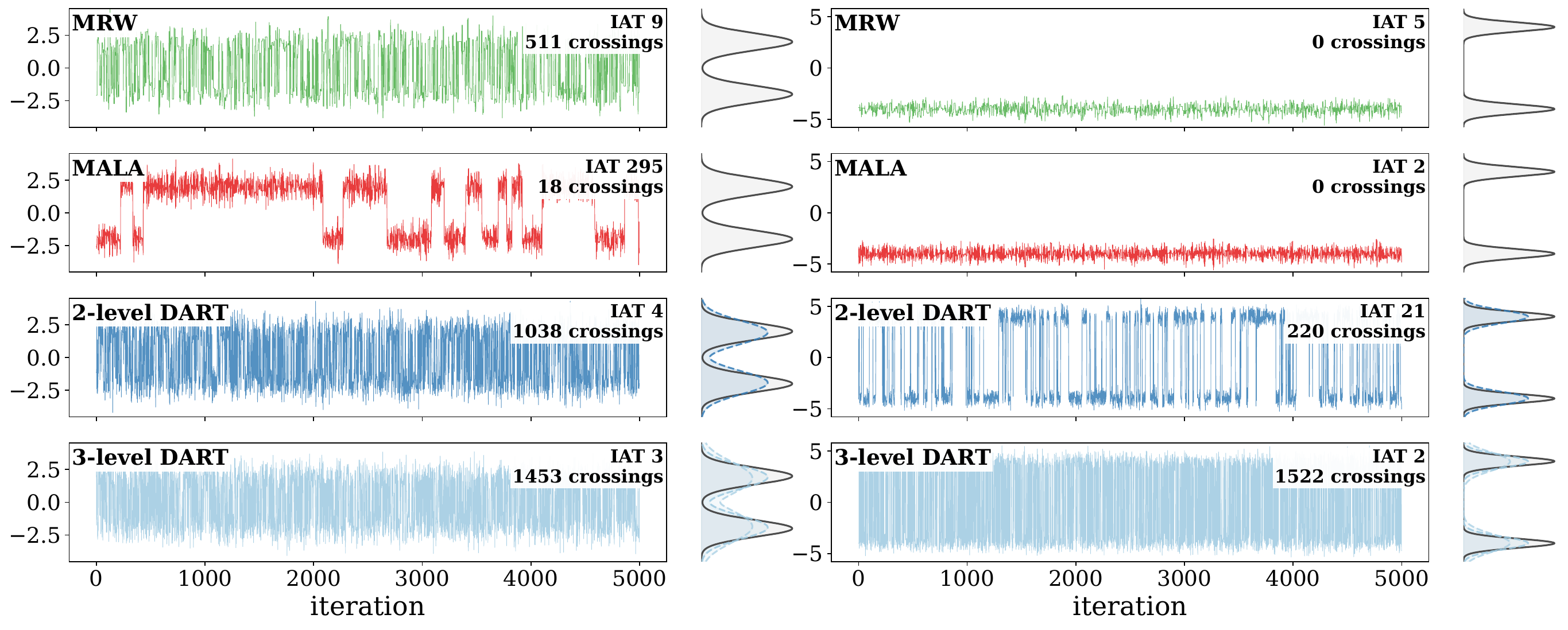}
    \caption{%
        Trace plots (first 5\,000 post-burn-in iterations) for MRW, MALA,
        two-level DART, and three-level DART on bimodal Gaussian mixture targets with
        \((\Delta,\,\sigma^2) = (4,\,0.4)\) (left) and
        \((\Delta,\,\sigma^2) = (8,\,0.2)\) (right).
        Side panels show the target density (grey) and, for the DART
        variants, the localised surrogate densities at each tempering
        level (dashed, coloured), evaluated at \(x = 0\).
        Each trace is annotated with its IAT (estimated from
        \(50\,000\) post-burn-in iterations) and mode-transition count.
        Acceptance tuned to MRW~\(0.3\), MALA~\(0.5\), and the pCN
        root~\(0.3\).
        Two-level DART: \(\theta = 0.5\), \(n = 20\).
        Three-level DART: \(\theta_1 = 0.25\), \(\theta_2 = 0.5\),
        \(n_1 = 20\), \(n_2 = 10\).
        Regularisation: \(\gamma = 0.05\) (moderate),
        \(\gamma = 0.01\) (harder).
    }
    \label{fig:bimodal}
\end{figure}

    \section{Spatial Generalised Linear Mixed Models}
        \label{sec:sglmm}
        We now present the novel Gaussian process parametrisation based on 
Dirichlet-Neumann Averaging (DNA) \cite{kutri2026dirichlet} and
show its synergy with DART on a hierarchical SGLMM problem.
For background on SGLMMs and
extensions to other observation models, see
\cite{diggle1998model, breslow1993approximate}.
        \subsection{Latent Field Inference}
        \label{sec:sglmm:latent}
        We consider a SGLMM with Poisson
response. Let the spatial domain be
\(\Omega \coloneqq (0,1)^D \subset \R^D\), with \(N\)
observation locations
\(\{s_i\}_{i=1}^{N} \subset \Omega\).
At each location we observe the count
\(y_i \in \N_0\) of a spatially varying phenomenon.
The underlying intensity is modelled by a latent,
centred GP \(u\) on \(\Omega\).
Conditional on this \emph{latent field}, observations are
assumed independent and Poisson distributed:
\begin{equation}\label{eq:sglmm}
    y_i \given u(s_i)
        \sim \mathrm{Poisson}\bigl(e^{\eta_i}\bigr),
        \qquad
        \eta_i \coloneqq \beta_0 + u(s_i),
        \qquad i = 1, \ldots, N,
\end{equation}
where \(\beta_0 \in \R\) is a known mean log-intensity.

Covariates are omitted from~\eqref{eq:sglmm} to
streamline the exposition.
We equip the latent field \(u\) with a GP prior
whose covariance function is Mat\'ern with marginal
variance \(\sigma^2\), correlation length \(\rho\),
and fixed smoothness \(\nu = 5/2\), and collect
the free covariance parameters in
\(\psi = (\rho, \sigma^2)\).
Together with the Poisson likelihood arising from~\eqref{eq:sglmm},
this prior defines a posterior density over \(u\).
As the Poisson likelihood is not conjugate to the
GP prior, closed-form updates are unavailable and
MCMC transitions are required.
In Section~\ref{sec:sglmm:hierarchical} we extend this
model by placing a hyperprior \(p(\psi)\) on
\(\psi\) and targeting the full joint posterior
over \(u\) and \(\psi\).

MCMC transitions require repeated evaluations of
the latent field \(u\) at the \(N\) observation
sites.
A naive Cholesky parametrisation of the latent
field values assembles and
factors the \(N \times N\) covariance matrix at
cost \(O(N^3)\), making each such evaluation
prohibitively expensive even at moderate \(N\).

\subsubsection{Latent Field Parametrisation}
\label{sec:dna-param}
The DNA framework of \cite{kutri2026dirichlet} represents the
latent field~\(u\) as the scaled
average of \(2^D\) independent GPs,
each satisfying a different combination of homogeneous
Dirichlet and Neumann conditions on
\(\partial\Omega\).
The boundary artifacts introduced by any single choice
of boundary conditions cancel exactly under this
averaging, yielding a genuinely isotropic field on
\(\Omega\). We refer
to~\cite{kutri2026dirichlet} for details in simulation and
error analysis, and present mainly the novel elements
pertaining to inference.

Boundary conditions are encoded in a vector
\(b \in \{0,1\}^D\), where \(b_j = 0\) prescribes
Neumann and \(b_j = 1\) Dirichlet conditions on the
\(j\)-th pair of opposing faces.
For a spectral resolution \(q \in \N\), define the
index sets \(I_b = I_{b_1} \times \cdots \times I_{b_D}\)
with \(I_0 = \{0,\ldots,q\}\) and
\(I_1 = \{1,\ldots,q\}\).
Let \(\hat\varphi_\psi\) denote the Fourier
transform of the Mat\'ern covariance function with
parameters \(\psi = (\rho, \sigma^2)\)
(see~\cite[Eq.~(2.22)]{graham2018analysis}), and
write
\(\omega_\nu = \omega_\nu(\psi)
    \coloneqq \sqrt{\hat\varphi_\psi(\nu/2)}\)
for the spectral weights at frequency~\(\nu\).
The DNA parametrisation of \(u\) is then given by
\begin{equation}\label{eq:dna-field}
    u(s,\,x)
        = 2^{-D/2}
          \sum_{b \in \{0,1\}^D}
          \sum_{\nu \in I_b}
          x_\nu^b
          \omega_\nu
          e_\nu^b(s),
    \qquad s \in \Omega,
\end{equation}
where the basis functions \(e_\nu^b\) are tensor
products of cosine (\(b_j = 0\)) and sine
(\(b_j = 1\)) modes, and the parameters
\(x = (x^b)_{b \in \{0,1\}^D} \in \R^d\),
with total dimension \(d = (2q+1)^D\), have prior
\(x \sim \mc{N}(0, I_d)\) (for details, see
\cite[Section~3]{kutri2026dirichlet}). 
Since each of the \(2^D\) component fields carries
its own coefficient vector but is evaluated on the
same grid, \(d\) exceeds the number of grid points
\(Q = (q+2)^D\) by a constant factor.

To evaluate the latent field at the \(N\) observation
sites, we introduce an equispaced grid \(\mc{G}\) on
\(\Omega\) with \(q+2\) vertices per dimension.
On this grid, each inner sum
in~\eqref{eq:dna-field} is computed via the
appropriate combination of discrete cosine and sine
transforms, encoded in a matrix \(W_b\).
A sparse bilinear interpolation matrix
\(\mrm{I}_\mrm{F} \in \R^{N \times Q}\)
maps the grid values to the observation sites
(c.f.~\cite{graham2015quasi}), giving
\begin{equation}\label{eq:field-eval}
    \bm{u}(x)
        = 2^{-D/2} \,\, \mrm{I}_\mrm{F} 
          \Biggl(
            \sum_{b \in \{0,1\}^D}
            W_b \,
            \Lambda_b \,
            x^b
          \Biggr)
        \;\in\; \R^N,
\end{equation}
where
\(
    \Lambda_b
        \coloneqq \diag\bigl(
            \omega_\nu
        \bigr)_{\nu \in I_b}
\)
and
\(
    \bm{u}_i(x) \coloneqq u(s_i, x).
\)
The total cost per evaluation is therefore
\(\mc{O}(Q \log Q + N)\).

\subsubsection{Surrogate Construction}
\label{sec:dart-sglmm}

For fixed hyperparameters \(\psi\), the posterior
over the latent field in its DNA parametrisation is
\(\pi(x) \propto e^{-f(x)}\) with potential
\begin{equation}\label{eq:target-potential}
    f(x)
        = -\mc{L}(x) + \frac{1}{2} \enorm{x}^2,
    \qquad
    \mc{L}(x)
        \coloneqq \sum_{i=1}^{N}
          \bigl(
            y_i \, \eta_i(x)
            - e^{\eta_i(x)}
          \bigr),
\end{equation}
where \(\mc{L}\) is the Poisson log-likelihood and
\(\eta_i(x) \coloneqq \beta_0 + \bm{u}_i(x)\) is the
linear predictor at site \(s_i\).
The gradient of \(f\) with respect to each block
\(x^b\) is available at cost
\(\mc{O}(Q \log Q + N)\) via the
adjoint of the interpolation and transform operators.

The structure of~\eqref{eq:dna-field}
provides a natural partition of the parameters.
Fix a coarse resolution \(q_\mrm{C} < q\) and define
coarse index sets \(I_{b,\mrm{C}}\) by replacing \(q\)
with \(q_\mrm{C}\). Each block decomposes as
\(x^b = (x^b_\mrm{C},\, x^b_\mrm{F})\),
where \(x^b_\mrm{C}\) collects modes with indices
in \(I_{b,\mrm{C}}\) and \(x^b_\mrm{F}\) collects
the remainder.
A coarse approximation to the log-likelihood is
obtained by replacing the field evaluation
in~\eqref{eq:target-potential} with its coarse-grid
analogue, using \(W_{b,\mrm{C}}\) and
\(\mrm{I}_\mrm{C}\) in place of \(W_b\) and
\(\mrm{I}_\mrm{F}\), and acting only on
\(x_\mrm{C}\). We denote this coarse log-likelihood
by \(\mc{L}_\mrm{C}(z_\mrm{C})\).
Provided \(q_\mrm{C}\) is chosen large enough that
the high-frequency modes carry negligible likelihood
information, a natural choice is to let these modes
contribute only via the prior. When at state \(x = (x_\mrm{C}, x_\mrm{F})\),
the localised surrogate potential used for DART is
\begin{equation}\label{eq:vXDART}
    V_x(z)
        = -\theta \, \mc{L}_\mrm{C}(z_\mrm{C})
          \;+\; \frac{\gamma}{2}
                \enorm{z_\mrm{C} - x_\mrm{C}}^2
          \;+\; \frac{1}{2} \enorm{z}^2,
\end{equation}
where
\(\theta \in (0,1)\) tempers the coarse likelihood and
the quadratic penalty localises only the coarse
component. Since the parameters are independent
under the prior, the prior density factorises
across the partition, giving the
\(
    \enorm{z_\mrm{C}}^2 + \enorm{z_\mrm{F}}^2
        = \enorm{z}^2 
\)
decomposition in~\eqref{eq:vXDART}.
A proposal \(z = (z_\mrm{C}, z_\mrm{F})\) is drawn
by running a root chain targeting the coarse
component of \(\Pi_x\), giving \(z_\mrm{C}\), and
drawing \(z_\mrm{F}\) from a pCN step targeting
\(\mc{N}(0, I_{d_\mrm{F}})\).

The acceptance probability~\eqref{eq:dartAcceptance} for this
construction simplifies considerably. We track three
cancellations in turn.
\begin{itemize}
    \item \textit{Quadratic penalty.} As in
    Section~\ref{sec:dart}, the symmetry
    \(V_x(z) - V_z(x) = -\theta (\mc{L}_\mrm{C}(z_\mrm{C}) -
    \mc{L}_\mrm{C}(x_\mrm{C})) + \tfrac{1}{2}(\enorm{z}^2 - \enorm{x}^2)\)
    eliminates the quadratic localisation term from the proposal density
    ratio; the prior difference that remains is cancelled below.

    \item \textit{High-frequency block.} Since \(x_\mrm{F}\) is not
    localised in \(V_x\), the dependence of the normalisation
    constant on the state enters only through the low-frequency
    component. We leverage
    \(N_x = N_{x,\mrm{C}} \cdot C_\mrm{F}\), where \(C_\mrm{F}\) is
    the Gaussian integral over \(z_\mrm{F}\) and is independent
    of \(x\). The ratio reduces to
    \(N_x / N_z = N_{x,\mrm{C}} / N_{z,\mrm{C}}\), which is a problem
    in significantly lower dimension than then initial one.

    \item \textit{Low-frequency prior.} Both \(f\) and \(V_x\) carry the
    prior term \(\tfrac{1}{2} \enorm{\cdot}^2\). The contribution
    to \(\pi(z) / \pi(x)\) is \(\exp(-\tfrac{1}{2}(\enorm{z}^2 -
    \enorm{x}^2))\). The contribution to \(p_z(x) / p_x(z)\) is its
    inverse, and the two cancel. This cancellation is structurally 
    the same as in pCN \cite{cotter2013mcmc}, with the localisation
    parameter \(\gamma\) playing no role in the preconditioned step.
\end{itemize}

Incorporating these three cancellations, the acceptance probability
reduces to
\begin{equation}\label{eq:alpha-dart-sglmm}
    \alpha(x, z) = \min \left\{
            1,~
            \exp \Big(
                \mc{L}(z) - \mc{L}(x)
                    + \theta \big(
                        \mc{L}_\mrm{C}(x_\mrm{C})
                            - \mc{L}_\mrm{C}(z_\mrm{C})
                    \big)
            \Big) \frac{N_{x,\mrm{C}}}{N_{z,\mrm{C}}}
    \right\}.
\end{equation}

\subsubsection{Experiment}
\label{sec:1d-experiments}

The Hyperparameters are fixed at \(\psi = (\rho, \sigma^2) = (0.1, 1.25)\)
and we consider \(D = 1\) spatial dimension. We begin in a regime where
the coarse surrogate captures most likelihood information at the
chosen correlation length. We compare DART against three alternatives:
MALA on the posterior under a Cholesky parametrisation, MALA on the
DNA parametrisation~\eqref{eq:target-potential} of the full posterior,
and MLDA on the DNA parametrisation using the same surrogate as DART
but without regularisation or tempering.
We deliberately do not include the INLA \cite{rue2009approximate}
and stan \cite{carpenter2017stan} HMC software routines in the
comparison, as these occupy different points on the trade-off between
cost, accuracy, and scalability.

Figure~\ref{fig:1d-figA} presents three diagnostic views. The worst-
and average-case IAT bars reveal a monotone hierarchy, from Cholesky,
to DNA with MALA, to DNA with MLDA, to DNA with DART. The DNA
parametrisation alone reduces the worst-case IAT over Cholesky; the
surrogate hierarchy in MLDA buys a further factor over MALA on DNA, and
regularisation and tempering bring DART to the lowest IAT. The
best-case bars do not follow this order, nor is this expected: the
best-mixing location is governed by the high-frequency blocks
\(x_\mrm{F}\), which carry negligible likelihood information and so mix
at essentially the prior rate for every DNA variant. The best-case
summary therefore reflects prior mixing common to all three DNA methods
and cannot separate them, but gives a clear advantage over the 
Cholesky parametrisation. The posterior mean and 95\% credible band
from DART (right panel) track the ground truth closely.
\begin{figure}[!htbp]
    \centering
    \includegraphics[width=\textwidth]{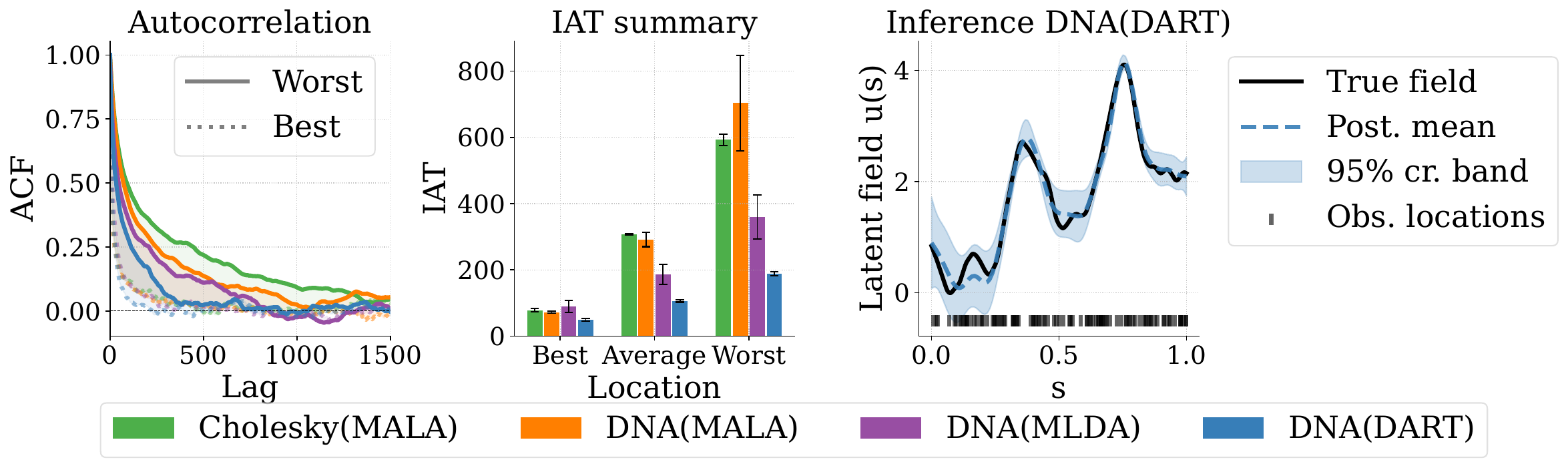}
    \caption{%
        Good-surrogate regime. Ground truth
        is generated once at high resolution
        (\(q_\mrm{gt} = 500\)) and held fixed.
        \(N = 100\) observation sites are
        drawn uniformly on \((0,1)\) and Poisson
        counts are simulated
        from~\eqref{eq:sglmm} with intercept
        \(\beta_0 = 1.5\).
        DNA methods use fine resolution
        \(q = 100\) and coarse resolution
        \(q_\mrm{C} = 20\).
        DART and MLDA run \(n = 25\) root
        chain steps per proposal, with DART using
        \(\gamma = 0.25\) and
        \(\theta = 0.75\).
        Each method is run for \(150\,000\)
        iterations over \(3\) independent replications.
        The respective first \(50\,000\) iterations are 
        discarded as burn-in.
        Left: ACF at worst-case (solid) and
        best-case (dashed) spatial locations.
        Centre: IAT summary (best, average, worst);
        error bars show standard errors.
        Right: posterior mean and 95\% credible band
        from DNA with DART, with ground truth
        (black) and observation locations (ticks).
    }
    \label{fig:1d-figA}
\end{figure}

        \subsection{Hierarchical Inference}
            \label{sec:sglmm:hierarchical}
            We extend the SGLMM from Section~\ref{sec:sglmm:latent} to joint inference
over the latent field and the covariance hyperparameters
\(\psi = (\rho, \sigma)\), now in \(D = 2\) spatial dimensions.
We place a hyperprior on \(\psi\) and use a Metropolis-within-Gibbs sampler
alternating between the latent and hyperparameter blocks.

% --- Hierarchical extension ------------------------------------------------

Placing a hyperprior \(p(\psi)\) on the covariance parameters, the full
joint posterior has density \(\pi(x, \psi) \propto e^{-f(x,\psi)}\)
with potential
\begin{equation}\label{eq:joint-potential}
  f(x, \psi)
    = -\mc{L}(x, \psi) + \tfrac{1}{2}\enorm{x}^2 - \log p(\psi),
\end{equation}
extending~\eqref{eq:target-potential} to include the hyperprior.
Here \(\mc{L}(x, \psi)\) is the Poisson log-likelihood
from~\eqref{eq:target-potential}, now written with its dependence on
\(\psi\) through the spectral weights \(\omega_\nu(\psi)\)
in~\eqref{eq:dna-field} made explicit.
The full conditionals for the two Gibbs blocks have potentials
\begin{equation}
  f_0(x)
    \coloneqq -\mc{L}(x, \psi)
              + \tfrac{1}{2}\enorm{x}^2,
      \quad \text{and} \quad
  f_1(\psi)
    \coloneqq -\mc{L}(x, \psi) - \log p(\psi),
\end{equation}
where \(f_0\) coincides with~\eqref{eq:target-potential} for
fixed~\(\psi\).
The coupling between the two Gibbs blocks is implicit, mediated through the
spectral weights \(\omega_\nu(\psi)\).

Under DNA, the per-sweep covariance update reduces to an \(\mc{O}(d)\)
spectral-weight recomputation. A Cholesky parametrisation would instead
require a fresh \(\mc{O}(N^3)\) factorisation each sweep, dominating
the per-sweep cost at scale.
The latent block targets the same posterior as
Section~\ref{sec:dart-sglmm}, and the DART transition applies as in
that section, with the surrogate re-evaluated at the current \(\psi\).
Since \(f_1\) is only two-dimensional, we update \(\psi\) on the
log-scale via a random walk MH step with
Robbins--Monro adaptation~\cite{robbins1951stochastic}.
For the hyperprior we use a joint penalised complexity
prior~\cite{simpson2017penalising} on \((\rho, \sigma)\), penalising
departure from a base model of zero variance and infinite correlation
length at rates controlled by
\(\Pr(\rho < \rho_0) = \alpha_\rho\) and
\(\Pr(\sigma > \sigma_0) = \alpha_\sigma\).

We consider the SGLMM from~\eqref{eq:sglmm} on the unit square
\(\Omega = (0,1)^2\) with Mat\'ern covariance and a constant
trend \(\beta_0 = 0.5\). We generate a smooth (\(\nu = 5/2\), \(\rho = 0.1\))
and a rough (\(\nu=3/2\), \(\rho=0.05\)) field as ground truth at
high resolution (\(q_{\mrm{gt}} = 500\)). The aim of the experiment in 
this section is to demonstrate robust inference of these fields and the 
corresponding hyperparameters from the corresponding simulated Poisson count data.
Observation locations are drawn from a Sobol sequence on \(\Omega\),
providing quasi-uniform coverage.

Figure~\ref{fig:hierarchical} shows, for each smoothness, the
ground-truth field, the DART posterior mean and the posterior standard deviation.
The posterior mean recovers the truth in both cases.
The posterior mean is slightly
more regular than the truth. We attribute this to
the penalised-complexity prior reverting towards the less
complex, smoother model where the likelihood is uninformative. To guard against
a single fortunate chain we run each demonstration at three independent seeds
and pool the draws, and the recovered field and its uncertainty are stable
across the three.

\begin{figure}[htbp]
  \centering
  \includegraphics[width=\textwidth]{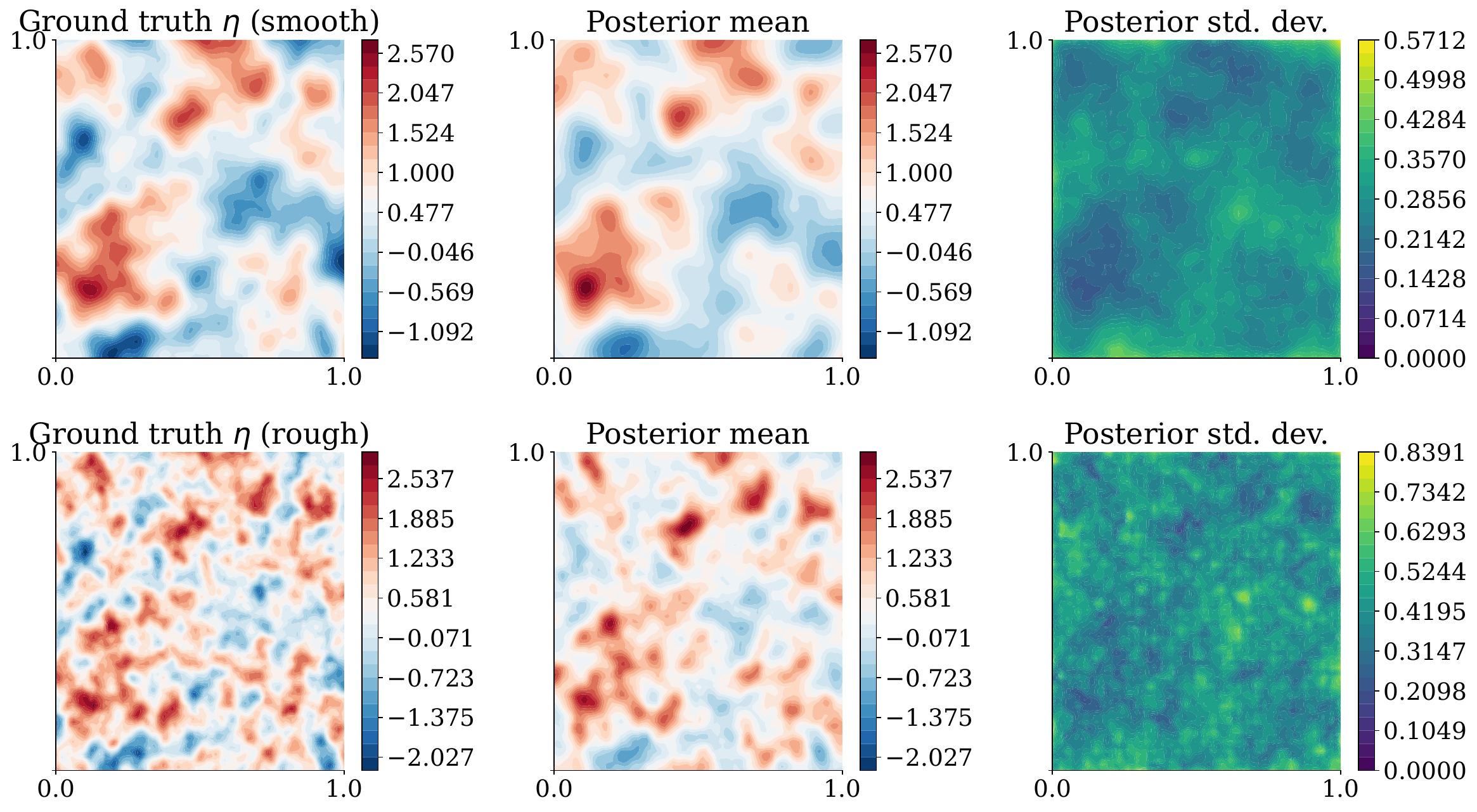}
  \caption{Hierarchical SGLMM, DART posterior fidelity at \(N=1024\). Rows:
    smooth field (\(\nu=5/2\), \(\rho=0.1\), \(\sigma^2=3/4\)) top,
    and rough field (\(\nu=3/2\), \(\rho=0.05\), \(\sigma^2=3/4\)) bottom.
    Columns: ground-truth field, DART posterior mean and posterior
    standard deviation.
    MCMC is run for \(230\,000\) iterations, of which \(30\,000\) are discarded
    as burn-in.
    Fine resolutions are \(q = 64\) for both fields 
    (\(16\,641\) parameters total)
    and the coarse resolutions are \(q_\mrm{C}=12\) (smooth, \(625\) parameters) and
    \(q_\mrm{C}=28\) (rough, \(3\,249\) parameters). DART parameters:
    \(\gamma=5\cdot10^{-3}\), \(\theta=0.9\), \(n=6\) for both.}
  \label{fig:hierarchical}
\end{figure}

    \section{Proofs}
        \label{sec:proof}
        The remainder of this manuscript assembles the proof of
Theorem~\ref{thm:mixingTime}. The argument has two stages. In the
first, we analyse an \emph{idealised} chain \(\ideal{T}\) whose
proposals are drawn exactly from the localised surrogate measures
\(\Pi_x\). This chain is \(\Pi\)-reversible, so its mixing time can be
bounded by the \(s\)-conductance machinery. In the second stage, we
transfer the bound to the \emph{implemented} chain \(T\), whose
proposals come from a finite-length root chain.
Section~\ref{sec:proof:main} combines all of these ingredients to
conclude. Standard, cited, and technical auxiliary results are collected
in Appendix~\ref{app:aux}.

The first stage rests on the following 
result, due to \cite{dwivedi2019log}, which we state in notation
adapted to our setting.

\begin{proposition} \label{prp:conductanceLowerBound}
    Let \(t > 0\), \(\varepsilon \in (0, 1)\), and let \(\nu\)
    be a probability measure on \(\R^d\) with density
    \(\varphi \propto e^{-g}\), where \(g \colon \R^d \to \R\) is
    \(\lambda\)-strongly convex.
    Further, suppose \(\{\mc{T}_x\}_{x \in \R^d}\) are the transition
    measures of a Markov chain with invariant measure \(\nu\).
    If \(\mc{K} \subset \R^d\) is a convex set such that
    for any \(x, y \in \mc{K}\)
    \begin{equation}
        \tvnorm{\mc{T}_x - \mc{T}_y} \le \varepsilon
            \quad \text{whenever} \quad
            \enorm{x - y} \le t,
    \end{equation}
    then for any measurable partition of \(\R^d\) into \(A_1\)
    and \(A_2\),
    \begin{equation}    \label{eq:generalConductanceBound}
        \int_{A_1} \mc{T}_z(A_2)\, \varphi(z) \d{z}
        \ge \frac{1 - \varepsilon}{4}
        \min\!\left\{
            1,\,
            \frac{\log 2}{8}\,\nu^2(\mc{K})\, t \sqrt{\lambda}
        \right\}
        \min\!\big\{
            \nu(A_1 \cap \mc{K}),\,
            \nu(A_2 \cap \mc{K})
        \big\}.
    \end{equation}
\end{proposition}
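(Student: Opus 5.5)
The approach is the standard Lovász--Simonovits coupling-plus-isoperimetry argument, as in \cite{dwivedi2019log}. Fix a measurable partition \(A_1, A_2\) of \(\R^d\). Define the sets of points that leak little probability,
\[
    A_1' \coloneqq \{ x \in A_1 \cap \mc{K} : \mc{T}_x(A_2) < \tfrac{1-\varepsilon}{2} \},
    \qquad
    A_2' \coloneqq \{ x \in A_2 \cap \mc{K} : \mc{T}_x(A_1) < \tfrac{1-\varepsilon}{2} \},
\]
and the remainder \(A_3' \coloneqq \mc{K} \setminus (A_1' \cup A_2')\). Invariance of \(\nu\) gives the flux balance \(\int_{A_1} \mc{T}_z(A_2)\varphi(z) \d{z} = \int_{A_2} \mc{T}_z(A_1)\varphi(z) \d{z}\). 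Hence the left-hand side of \eqref{eq:generalConductanceBound} is at least \(\tfrac12 \int_{A_3'} \tfrac{1-\varepsilon}{2}\varphi \ge \tfrac{1-\varepsilon}{4}\,\nu(A_3')\).

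Two cases follow. First suppose \(\nu(A_1') \le \tfrac12 \nu(A_1 \cap \mc{K})\). Then \(\nu(A_3') \ge \nu((A_1\cap\mc{K}) \setminus A_1') \ge \tfrac12\nu(A_1\cap\mc{K})\), and the bound follows directly with the constant \(1\) in the first minimum, up to the factor \(1/2\), which is absorbed by rebalancing constants. The case \(\nu(A_2') \le \tfrac12\nu(A_2\cap\mc{K})\) is symmetric. In the remaining case both \(A_1'\) and \(A_2'\) keep at least half of the mass of \(A_i \cap \mc{K}\).

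In this case the key step is the separation of \(A_1'\) and \(A_2'\). For \(x \in A_1'\) and \(y \in A_2'\),
\[
    \tvnorm{\mc{T}_x - \mc{T}_y} \ge \mc{T}_x(A_1) - \mc{T}_y(A_1) > 1 - \tfrac{1-\varepsilon}{2} - \tfrac{1-\varepsilon}{2} = \varepsilon .
\]
By the hypothesis, which applies since \(x, y \in \mc{K}\), this forces \(\enorm{x-y} > t\). So \(A_1'\) and \(A_2'\) are \(t\)-separated subsets of the convex set \(\mc{K}\).

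I would then invoke the isoperimetric inequality for strongly log-concave measures restricted to a convex set. The restriction of \(\nu\) to \(\mc{K}\), normalised, is still \(\lambda\)-strongly log-concave. The inequality I would use (cf.\ \cite{dwivedi2019log}, via \cite{lovasz1993random} and Cousins--Vempala) states that for a partition \(S_1, S_2, S_3\) of \(\mc{K}\) with \(d(S_1,S_2) \ge t\),
\[
    \nu_{\mc{K}}(S_3) \ge \tfrac{\log 2}{2}\, t\sqrt{\lambda}\, \nu_{\mc{K}}(S_1)\nu_{\mc{K}}(S_2)
\]
in a suitably normalised form. Applying it with \(S_i = A_i'\), and using \(\nu_{\mc{K}}(A_i') \ge \nu(A_i')\) together with \(\nu(A_i') \ge \tfrac12 \nu(A_i\cap\mc{K})\), gives
\[
    \nu(A_3') \gtrsim t\sqrt{\lambda}\,\nu^2(\mc{K})\,\min\{\nu(A_1\cap\mc{K}),\nu(A_2\cap\mc{K})\}.
\]
Here the factor \(\nu^2(\mc{K})\) arises from converting the product \(\nu_{\mc{K}}(S_1)\nu_{\mc{K}}(S_2)\) to a minimum and from renormalising between \(\nu_{\mc{K}}\) and \(\nu\).

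The main obstacle I expect is bookkeeping the constants so that exactly \(\tfrac{\log 2}{8}\nu^2(\mc{K})\) emerges. This means using \(\max \le 1\) to pass from a product to a minimum, and tracking the factors of \(\tfrac12\) from the two case conditions. If the constants do not line up, I would fall back on citing the proof of \cite[Lemma~2]{dwivedi2019log} verbatim, since the statement is literally that result in the present notation.
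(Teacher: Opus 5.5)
Your argument is the standard Lovász-type proof of \cite[Lemma~2]{dwivedi2019log}. That is exactly what the paper relies on here: it states the proposition with a citation and gives no proof of its own. So the route matches and the structure is correct. As written, though, it proves the inequality only up to a factor \(2\). Since the statement has explicit constants, that factor cannot simply be ``absorbed'', and both slips have to be fixed.

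In the first case, do not average the two fluxes. Directly, \(\int_{A_1}\mc{T}_z(A_2)\varphi(z)\,\mathrm{d}z \ge \tfrac{1-\varepsilon}{2}\,\nu\big((A_1\cap\mc{K})\setminus A_1'\big) \ge \tfrac{1-\varepsilon}{4}\,\nu(A_1\cap\mc{K})\). When \(\nu(A_2')\le\tfrac12\nu(A_2\cap\mc{K})\), the symmetric bound follows via flux balance. This gives the branch with \(1\) in the first minimum exactly.

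In the separated case, the isoperimetric inequality must carry the constant \(\log 2\), not \(\tfrac{\log 2}{2}\). That is, \(\nu_{\mc{K}}(S_3)\ge \log 2\cdot t\sqrt{\lambda}\,\nu_{\mc{K}}(S_1)\nu_{\mc{K}}(S_2)\), the Cousins--Vempala bound with \(\sigma=\lambda^{-1/2}\). Then \(\nu(A_3') = \nu(\mc{K})\,\nu_{\mc{K}}(A_3') \ge \nu(\mc{K})\log 2\, t\sqrt{\lambda}\,\nu(A_1')\nu(A_2')\). Using \(\nu(A_i')\ge\tfrac12\nu(A_i\cap\mc{K})\) and \(\max_i\nu(A_i\cap\mc{K})\ge\tfrac12\nu(\mc{K})\), this is at least \(\tfrac{\log 2}{8}\,\nu^2(\mc{K})\,t\sqrt{\lambda}\,\min\{\nu(A_1\cap\mc{K}),\nu(A_2\cap\mc{K})\}\). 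Your averaged flux bound \(\tfrac{1-\varepsilon}{4}\nu(A_3')\) then yields the stated constant exactly. With your \(\tfrac{\log 2}{2}\) you would instead land on \(\tfrac{\log 2}{16}\).

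The remaining steps are correct: flux balance from invariance alone, the \(t\)-separation of \(A_1'\) and \(A_2'\), and strong log-concavity of the normalised restriction to the convex set \(\mc{K}\).
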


If, in addition, \(\nu\) assigns a significant fraction of its mass to
\(\mc{K}\), Proposition~\ref{prp:conductanceLowerBound} yields an
explicit conductance lower bound, which via \eqref{eq:lovaszBound}
gives a mixing-time upper bound under a warm-start assumption.

\begin{lemma} \label{lem:generalMixingBound}
    Let \(\nu \in \mc{P}(\R^d)\) have density
    \(\varphi \propto e^{-g}\) with \(g \colon \R^d \to \R\)
    \(\lambda\)-strongly convex, and let
    \(\{\mc{T}_x\}_{x \in \R^d}\) be the transition measures of a
    \(\nu\)-reversible, lazy Markov chain with invariant measure
    \(\nu\). Suppose there exist \(t > 0\) and a convex set
    \(\mc{K} \subset \R^d\) with
    \begin{equation} \label{eq:mixingContinuity}
        \tvnorm{\mc{T}_x - \mc{T}_y} \le \tfrac{3}{4}
        \quad \text{whenever} \quad
        \enorm{x - y} \le t, \qquad x, y \in \mc{K}.
    \end{equation}
    Let \(\delta \in (0,1)\) and let \(\mu\) be \(\beta\)-warm with
    respect to \(\nu\), with \(\beta \ge 1\). If
    \begin{equation} \label{eq:targetMassAssumption}
        \nu(\mc{K}) \ge 1 - \frac{\delta}{2\beta},
    \end{equation}
    then
    \begin{equation} \label{eq:generalMixingForAllN}
        \tvnorm{T^n \mu - \nu} \le \delta
        \qquad \text{for all} \qquad
        n \gtrsim \frac{1}{t^2 \lambda} \log \frac{2\beta}{\delta}.
    \end{equation}
\end{lemma}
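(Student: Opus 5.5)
The bound follows by feeding Proposition~\ref{prp:conductanceLowerBound}, with \(\varepsilon = \tfrac34\), into the \(s\)-conductance estimate \eqref{eq:lovaszBound} and then choosing \(s\) appropriately. We first reduce to sets of the right kind. Since \(\varphi \propto e^{-g}\) with \(g\) strongly convex, \(\nu\) has a continuous positive density, so \(\nu(A)\) varies continuously with \(A\) and it suffices to consider measurable \(A\) with \(\nu(A) \in (s, \tfrac12)\). Apply Proposition~\ref{prp:conductanceLowerBound} to the partition \(A_1 = A\), \(A_2 = \cmpl{A}\). The hypothesis \eqref{eq:mixingContinuity} gives \(1-\varepsilon = \tfrac14\), and so
\begin{equation}
    \int_A \mc{T}_x(\cmpl{A})\,\varphi(x)\d{x}
    \ge \tfrac{1}{16}\min\!\Big\{1,\tfrac{\log 2}{8}\nu^2(\mc{K})\,t\sqrt{\lambda}\Big\}
    \min\{\nu(A\cap\mc{K}),\nu(\cmpl{A}\cap\mc{K})\}.
\end{equation}

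Next we pick \(s\) and control the \(\mc{K}\)-restricted masses. Set \(s \coloneqq \delta/(2\beta)\). By \eqref{eq:targetMassAssumption}, \(\nu(\cmpl{\mc{K}}) \le s\), which gives
\[
    \nu(A\cap\mc{K}) \ge \nu(A) - s
    \quad\text{and}\quad
    \nu(\cmpl{A}\cap\mc{K}) \ge \tfrac12 - s \ge \nu(A) - s .
\]
Hence the last minimum is at least \(\nu(A) - s\). We also have \(\nu(\mc{K}) \ge 1 - \tfrac12 = \tfrac12\), because \(\delta<1\) and \(\beta \ge 1\), so \(\nu^2(\mc{K}) \ge \tfrac14\). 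Dividing by \(\nu(A)-s\) then gives
\[
    \Phi_s(T) \gtrsim \min\{1, t\sqrt{\lambda}\}.
\]

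The Lovász--Simonovits bound \eqref{eq:lovaszBound} requires a lazy reversible chain, which is assumed here. It yields
\[
    \tvnorm{T^n\mu-\nu} \le \beta s + \beta\big(1-\tfrac12\Phi_s^2\big)^n \le \tfrac{\delta}{2} + \beta e^{-n\Phi_s^2/2}.
\]
The second term is at most \(\delta/2\) once \(n \ge 2\Phi_s^{-2}\log(2\beta/\delta)\). With the conductance bound, it is enough that
\[
    n \gtrsim \max\{1, (t^2\lambda)^{-1}\}\log\tfrac{2\beta}{\delta}.
\]

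The main obstacle is the \(\min\{1,\cdot\}\) in this requirement: the stated form \(n \gtrsim (t^2\lambda)^{-1}\log(2\beta/\delta)\) needs \(t\sqrt{\lambda}\lesssim 1\). We handle this by noting that \(t\) may be shrunk without violating \eqref{eq:mixingContinuity}. We replace \(t\) by \(\min\{t, 1/\sqrt{\lambda}\}\), and in the intended application \(t \lesssim 1/\sqrt{L} \le 1/\sqrt{\lambda}\) anyway. Alternatively, we can read the ``\(\gtrsim\)'' as absorbing the regime \(t\sqrt\lambda\ge1\), where \(n\gtrsim\log(2\beta/\delta)\) suffices. Either way the claimed bound \eqref{eq:generalMixingForAllN} follows, for all \(n\) beyond the threshold, since the estimate is monotone in \(n\).
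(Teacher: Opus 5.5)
Your argument is correct and is essentially the paper's proof: Proposition~\ref{prp:conductanceLowerBound} with \(\varepsilon = \tfrac34\), the choice \(s = \delta/(2\beta)\), the bound \(\min\{\nu(A\cap\mc{K}),\nu(\cmpl{A}\cap\mc{K})\} \ge \nu(A) - s\), the estimate \(\nu^2(\mc{K}) \ge \tfrac14\), and then \eqref{eq:lovaszBound} with \(1 - x \le e^{-x}\). Your handling of the \(\min\{1,\cdot\}\) is more careful than the paper's, which dismisses that case by saying it \enquote{only strengthens the bound} (it does not); note, however, that shrinking \(t\) to \(\min\{t,\lambda^{-1/2}\}\) only gives \(n \gtrsim \max\{1,(t^2\lambda)^{-1}\}\log(2\beta/\delta)\), so the literal statement really needs \(t \lesssim \lambda^{-1/2}\) (it fails for the lazy independence sampler \(\mc{T}_x = \tfrac12\delta_x + \tfrac12\nu\), which satisfies \eqref{eq:mixingContinuity} for every \(t\) while \(\tvnorm{T^n\mu - \nu} = 2^{-n}\tvnorm{\mu - \nu}\)), a restriction that is harmless in the application, where \(t \eqsim \gamma^{-1/2}\) and \(\gamma \gtrsim L \ge \lambda\).
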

\begin{proof}
    Set \(s \coloneqq \tfrac{\delta}{2\beta}\). Since \(\beta \ge 1\)
    and \(\delta < 1\), we have \(s \in (0, \tfrac{1}{2})\), and the
    error budget in \eqref{eq:lovaszBound} is split evenly.
    With \(\varepsilon = 3/4\), and taking the minimum in
    \eqref{eq:generalConductanceBound} to be attained by its second
    argument (the complementary case only strengthens the bound),
    \eqref{eq:generalConductanceBound} reduces to
    \begin{equation} \label{eq:reducedConductanceBound}
        \int_{A_1} \mc{T}_z(A_2)\,\varphi(z)\,\d{z}
            \ge \frac{\log 2}{128}\, t \sqrt{\lambda}\,
                \nu^2(\mc{K})\,
                \min\!\big\{
                    \nu(A_1 \cap \mc{K}),\,
                    \nu(A_2 \cap \mc{K})
                \big\}.
    \end{equation}
    Now let \(A\) be a measurable set with
    \(s < \nu(A) \le \tfrac{1}{2}\), and set \(A_1 = A\),
    \(A_2 = \cmpl{A}\). By \eqref{eq:targetMassAssumption},
    \begin{equation} \label{eq:nuCapBound}
        \nu(A \cap \mc{K})
            \ge \nu(A) - \nu(\cmpl{\mc{K}})
            \ge \nu(A) - s,
    \end{equation}
    and analogously \(\nu(\cmpl{A} \cap \mc{K}) \ge \nu(\cmpl{A}) - s\).
    Since \(\nu(A) > s\) and \(\nu(\cmpl{A}) \ge \nu(A) > s\),
    both lower bounds are positive, so
    \[
        \min\!\big\{
            \nu(A \cap \mc{K}),\, \nu(\cmpl{A} \cap \mc{K})
        \big\}
        = \nu(A) - s.
    \]
    Using \(\log 2 \ge 1/2\), \(\nu(\mc{K}) \ge 1 - s\) and
    \((1-s)^2 \ge \tfrac{1}{4}\), we arrive at the conductance lower
    bound
    \begin{equation}
        \Phi_s(T)
        \;\coloneqq\;
        \inf_{\nu(A) \in (s,\, \nicefrac{1}{2})}
        \frac{
            \int_{A} \mc{T}_x(\cmpl{A})\,\varphi(x)\,\d{x}
        }{
            \nu(A) - s
        }
        \;\ge\; \frac{t\sqrt{\lambda}}{1024}.
    \end{equation}
    Inserting this into \eqref{eq:lovaszBound} and applying
    Bernoulli's inequality yields
    \begin{equation} \label{eq:mixingPenultimate}
        \tvnorm{T^n \mu - \nu}
            \le \frac{\delta}{2}
                + \beta\bigl(1 - c\,t^2\lambda\bigr)^n
            \le \frac{\delta}{2}
                + \beta\,e^{-c\,n t^2 \lambda},
    \end{equation}
    where \(c = 1/(2 \cdot 1024^2)\) is a universal constant. For
    \(n \ge c^{-1}(t^2\lambda)^{-1}\log(2\beta/\delta)\) the second term
    is at most \(\delta/2\), giving
    \(\tvnorm{T^n\mu - \nu} \le \delta\) for all such \(n\). The factor
    \(c^{-1}\) is absorbed into the implicit constant of the statement.
\end{proof}

The remaining ingredient is a convex set \(\mc{K}\) satisfying
\eqref{eq:targetMassAssumption}. For strongly log-concave targets, this
is supplied by concentration of \(\nu\) around the minimiser of its
potential \cite{durmus2019high, dwivedi2019log}. The radius of concentration
is governed by
\begin{equation} \label{eq:rDef}
    r \colon (0,\infty) \times \N \to \R, \qquad
    r(\varepsilon, d) \coloneqq \left(
        1 + 2\sqrt{\tfrac{\log \varepsilon^{-1}}{d}}
            + \tfrac{2\log \varepsilon^{-1}}{d}
    \right)^{\!\frac{1}{2}},
\end{equation}
and the concentration of measure is characterised as follows.

\begin{proposition} \label{prp:concentration}
    Let \(\nu\) be a probability measure on \(\R^d\) with density
    \(\varphi \propto e^{-g}\), where \(g \colon \R^d \to \R\) is
    \(\lambda\)-strongly convex, and set
    \(\hat{x} \coloneqq \argmin{\R^d} g\).
    Then for any \(\varepsilon \in (0,1)\),
    \begin{equation}
        \nu\!\left(\mc{K}(\varepsilon)\right) \ge 1 - \varepsilon,
        \qquad \text{where} \qquad
        \mc{K}(\varepsilon)
            \coloneqq \mbb{B}\!\left(
                \hat{x},\, r(\varepsilon, d)\sqrt{d/\lambda}
            \right).
    \end{equation}
\end{proposition}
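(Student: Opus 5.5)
The plan is to prove the statement by comparing the law of \(R \coloneqq \enorm{X - \hat x}^2\), \(X \sim \nu\), with the scaled chi-square variable \(\lambda^{-1}\chi^2_d\). The reason is that the radius \(r(\varepsilon,d)\sqrt{d/\lambda}\) is exactly the Laurent--Massart upper-tail quantile: \(\pr(\chi^2_d \ge d + 2\sqrt{da} + 2a) \le e^{-a}\) with \(a = \log\varepsilon^{-1}\). Indeed,
\[
    r(\varepsilon,d)^2\,\frac{d}{\lambda}
        = \frac{1}{\lambda}\Big(d + 2\sqrt{d\log\varepsilon^{-1}} + 2\log\varepsilon^{-1}\Big).
\]
It therefore suffices to show that the moment generating function of \(\lambda R\) is dominated by that of \(\chi^2_d\). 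The Chernoff argument of Laurent and Massart, which uses only this MGF bound, then transfers verbatim. Cruder Lipschitz concentration, via log-Sobolev and \(\exv\enorm{X-\hat x}\le\sqrt{d/\lambda}\), only gives the radius \(\sqrt{d/\lambda}\,(1+\sqrt{2a/d})\). That radius is strictly larger than \(r\sqrt{d/\lambda}\), so the sharper comparison is genuinely needed.

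The MGF bound comes from a Stein-type integration by parts. For \(0 \le s < \lambda\), set \(h_s(x) \coloneqq e^{s\enorm{x-\hat x}^2/2}\) and \(\Lambda(s) \coloneqq \log \exv_\nu h_s(X)\). Integrating by parts against \(e^{-g}\) gives
\[
    \exv_\nu\!\big[\iprod{\nabla g(X),\, X-\hat x}\, h_s(X)\big]
        = \exv_\nu\!\big[\operatorname{div}\big((X-\hat x)h_s(X)\big)\big]
        = \exv_\nu\!\big[(d + sR)\,h_s(X)\big].
\]
Since \(\nabla g(\hat x) = 0\), strong convexity gives \(\iprod{\nabla g(x), x-\hat x} \ge \lambda\enorm{x-\hat x}^2\). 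Hence \((\lambda - s)\,\exv[R h_s] \le d\,\exv[h_s]\). Because \(\Lambda'(s) = \exv[R h_s]/(2\exv[h_s])\), this yields \(\Lambda'(s) \le d/(2(\lambda-s))\). Integrating from \(0\), where \(\Lambda(0)=0\), gives
\[
    \exv_\nu e^{sR/2} \le (1 - s/\lambda)^{-d/2},
\]
which is precisely the MGF of \(\lambda^{-1}\chi^2_d\).

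Next, write \(u = s/(2\lambda) \in [0,\tfrac12)\). The previous bound becomes \(\log\exv e^{u\lambda R} \le -\tfrac d2\log(1-2u)\). Following Laurent--Massart, subtract \(ud\) and use \(-\tfrac12\log(1-2u) - u \le u^2/(1-2u)\). This shows that \(\lambda R - d\) is sub-gamma with variance factor \(2d\) and scale \(2\). The standard Chernoff optimisation then gives \(\pr(\lambda R \ge d + 2\sqrt{da} + 2a) \le e^{-a}\). Taking \(a = \log\varepsilon^{-1}\), this reads \(\nu(\cmpl{\mc K(\varepsilon)}) \le \varepsilon\), as claimed.

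The main obstacle is justifying the integration by parts, namely the vanishing of boundary terms and the finiteness of \(\exv_\nu h_s\) for \(s<\lambda\). I would handle this first. Strong convexity gives \(g(x) \ge g(\hat x) + \tfrac\lambda2\enorm{x-\hat x}^2\), so \(h_s e^{-g}\) decays like \(e^{-(\lambda-s)\enorm{x-\hat x}^2/2}\) times a polynomial factor. The flux through large spheres then vanishes provided \(\nabla g\) grows at most polynomially. If \(g\) is not assumed differentiable, I would first mollify \(g\), which preserves \(\lambda\)-strong convexity, and pass to the limit by monotone convergence. Alternatively, the proposition can simply be quoted from \cite{dwivedi2019log, durmus2019high}.
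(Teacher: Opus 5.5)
Your proof is correct, and it takes a genuinely different route from the paper. The paper does not prove this proposition at all; it attributes it to \cite{durmus2019high, dwivedi2019log}.

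You make the argument self-contained. Integrating the field $(x-\hat x)\,e^{s\enorm{x-\hat x}^2/2}$ by parts against $e^{-g}$ and using $\iprod{\nabla g(x),\, x-\hat x} \ge \lambda\enorm{x-\hat x}^2$ gives $\exv_\nu e^{s\enorm{X-\hat x}^2/2} \le (1-s/\lambda)^{-d/2}$. That is, the MGF of $\lambda\enorm{X-\hat x}^2$ is dominated by that of $\chi^2_d$, and the Laurent--Massart Chernoff step then returns exactly the quantile $r(\varepsilon,d)^2 d/\lambda$. At $s=0$ your identity reduces to the integration by parts behind \cite[Proposition~1(ii)]{durmus2019high}, which the paper already invokes in Lemma~\ref{lem:temperedAcceptance}. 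Your argument is therefore the natural exponential upgrade of a tool the paper already uses.

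What your route buys is a derivation, with exactly these constants, of the mode-centred, Laurent--Massart-shaped radius in the statement. As you note, a second-moment bound plus Lipschitz concentration only gives $(1+\sqrt{2\log(\varepsilon^{-1})/d})\sqrt{d/\lambda}$. What the citation buys is brevity, and this costs nothing downstream. The proof of Theorem~\ref{thm:mixingTime} only needs $r(\delta/(4\beta),d)\le 3$ under $\log(2\beta/\delta)\le d$, together with $r(\varepsilon,d)^2 \lesssim 1+\log(\varepsilon^{-1})/d$. The cruder radius satisfies both, since $1+\sqrt{2(1+\log 2)}<3$.

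One correction to your technical discussion. Polynomial growth of $\nabla g$ is not implied by the hypotheses, since strong convexity only bounds $\nabla g$ from below. It is also not needed. On $\mathbb{B}(\hat x,\rho)$ the boundary term is $\rho\, e^{s\rho^2/2}\int_{\partial\mathbb{B}(\hat x,\rho)} e^{-g}\,\mathrm{d}S$, which contains no gradient and is non-negative. Hence
$(\lambda-s)\int_{\mathbb{B}(\hat x,\rho)} \enorm{x-\hat x}^2 e^{s\enorm{x-\hat x}^2/2-g(x)}\,\mathrm{d}x \le d\int_{\mathbb{B}(\hat x,\rho)} e^{s\enorm{x-\hat x}^2/2-g(x)}\,\mathrm{d}x$
holds on every ball and passes to $\rho\to\infty$ by monotone convergence. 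Mollification is unnecessary as well. A convex $g$ is locally Lipschitz, so the divergence theorem applies directly to the locally Lipschitz field $(x-\hat x)\,e^{s\enorm{x-\hat x}^2/2-g(x)}$. The strong-monotonicity inequality holds at every point where $g$ is differentiable, which is almost everywhere.
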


        \subsection{Properties of Localised Surrogate Measures}
            \label{sec:proof:surrogate}
            We first establish Lipschitz continuity of the map
\(x \mapsto \Pi_x\) in total variation, a key ingredient
required by Proposition~\ref{prp:conductanceLowerBound}.
\begin{lemma}   \label{lem:surrogateContinuity}
    Let \(\{\pi_x\}_{x \in \R^d}\) be a collection of 
    strongly log-concave surrogate densities
    (Definition~\ref{def:stronglyLogConcave}). Then
    \begin{equation}
        \tvnorm[\big]{\Pi_x - \Pi_y}
            \le \frac{\gamma}{2\sqrt{\gamma + \theta m}}\;\enorm{x - y}.
    \end{equation} 
\end{lemma}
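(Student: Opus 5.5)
We aim for an information-theoretic bound: control total variation by Kullback--Leibler divergence via Pinsker's inequality, and compute the KL divergence between two members of the family \(\{\Pi_x\}\) using the structure of the potentials. Pinsker gives
\[
    \tvnorm{\Pi_x - \Pi_y} \le \sqrt{\tfrac12 \klDiv{\Pi_x}{\Pi_y}},
\]
so it suffices to show \(\klDiv{\Pi_x}{\Pi_y} \le \frac{\gamma^2}{2(\gamma+\theta m)}\enorm{x-y}^2\).

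\textbf{Exponential-family structure.} By \eqref{eq:exponentLinearity}, \(V_y(v) - V_x(v) = \gamma\iprod{v - \tfrac{x+y}{2},\, x-y}\) is affine in \(v\). Hence the \(\Pi_x\) form an exponential family with sufficient statistic \(v\), natural parameter \(\gamma x\), and base measure \(e^{-\theta g(v) - \frac{\gamma}{2}\enorm{v}^2}\). Write \(A(\eta) = \log \int e^{\iprod{\eta, v} - \theta g(v) - \frac{\gamma}{2}\enorm{v}^2}\d{v}\) for the log-partition function. The KL divergence within an exponential family is the Bregman divergence of \(A\):
\[
    \klDiv{\Pi_x}{\Pi_y} = A(\gamma y) - A(\gamma x) - \iprod{\nabla A(\gamma x),\, \gamma(y-x)}.
\]
Alternatively, one can compute \(\log(\pi_x/\pi_y)\) directly, which gives an affine function minus \(\log N_x/N_y\), and integrate.

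\textbf{Bounding the Hessian of \(A\).} By Taylor's theorem, the Bregman divergence is at most \(\tfrac12 \sup_\eta \norm{\nabla^2 A(\eta)}\,\gamma^2\enorm{x-y}^2\). Moreover, \(\nabla^2 A(\eta)\) is the covariance of the tilted measure, which is again some \(\Pi_{x'}\) with \(x' = \eta/\gamma\). Each \(\Pi_{x'}\) has potential \(V_{x'}\) that is \((\gamma+\theta m)\)-strongly convex. By the Brascamp--Lieb inequality (or Poincaré with constant \((\gamma+\theta m)^{-1}\) applied to linear functions), its covariance satisfies \(\mathrm{Cov} \preceq (\gamma+\theta m)^{-1}\mbb{I}\). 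Therefore
\[
    \klDiv{\Pi_x}{\Pi_y} \le \frac{\gamma^2\enorm{x-y}^2}{2(\gamma+\theta m)},
\]
and Pinsker yields \(\tvnorm{\Pi_x-\Pi_y} \le \frac{\gamma}{2\sqrt{\gamma+\theta m}}\enorm{x-y}\), which is exactly the claimed constant.

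\textbf{Main obstacle.} The delicate step is justifying differentiability of \(A\) and the identity \(\nabla^2 A = \mathrm{Cov}\), i.e.\ differentiating under the integral. This follows because \(V_x\) is strongly convex, giving Gaussian tails uniformly for \(\eta\) in compact sets, so dominated convergence applies. The covariance bound itself needs only the Brascamp--Lieb inequality for strongly log-concave measures. That inequality requires \(V_{x'}\) to be \(C^2\), which we do not have since \(g\) is merely \(M\)-smooth. To bypass this, use the Poincaré inequality for \((\gamma+\theta m)\)-strongly log-concave measures (Bakry--Émery, valid via approximation), applied to the coordinate functions \(v\mapsto\iprod{u,v}\).
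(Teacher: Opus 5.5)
Your proof is correct and yields the same constant, but it bounds the key quantity differently from the paper.

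\textbf{What the two arguments share.} Both start from Pinsker. Both use that $V_y - V_x$ is affine in the integration variable, so that $\klDiv{\Pi_x}{\Pi_y}$ equals the centred log-moment generating function $\log \exv_{z\sim\Pi_x} e^{-\gamma\langle x-y,\, z-\bar z_x\rangle}$ of a linear statistic. Your Bregman divergence of the log-partition function $A$ is exactly this quantity, written in exponential-family language.

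\textbf{How the paper bounds it.} The paper writes $\pi_x$ as a log-concave perturbation of the Gaussian $\mc{N}(c_x,(\gamma+\theta m)^{-1}\mbb{I})$. It then invokes Harg\'e's inequality, which compares expectations of convex functions of the centred variables with their Gaussian counterparts. Applied to the exponential of a linear functional, this bounds the log-MGF by the Gaussian value $\frac{\gamma^2}{2(\gamma+\theta m)}\enorm{x-y}^2$ in a single step, with no regularity questions about $A$.

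\textbf{How you bound it.} You integrate the second derivative of the log-MGF along the segment. This works because every exponential tilt of $\Pi_x$ is again a member $\Pi_{x'}$ of the family, with $x' = x + s/\gamma$. You then bound the covariance uniformly by $(\gamma+\theta m)^{-1}\mbb{I}$ via Poincar\'e (or Brascamp--Lieb).

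\textbf{What each route buys.} Your route relies only on the standard spectral-gap bound for strongly log-concave measures. That is a more widely known and more easily checked tool than Harg\'e's convex-ordering theorem. The price is justifying smoothness of $A$ and differentiation under the integral. Both are unproblematic here, since strong convexity gives Gaussian tails for every natural parameter in $\R^d$. You also need the approximation step for non-$C^2$ potentials, which you already flag.

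\textbf{How they relate.} The KL bound is attained in both arguments when $g$ is quadratic with Hessian $m\mbb{I}$. The two proofs are also closely linked: the covariance bound you need is itself a consequence of Harg\'e's inequality applied to $u \mapsto \langle e, u\rangle^2$.
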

\begin{proof}
    Pinsker's inequality gives
    \begin{equation}    \label{eq:pinsker}
        \tvnorm[\big]{\Pi_x - \Pi_y}^2
            \le \tfrac{1}{2}\klDiv{\Pi_x}{\Pi_y},
    \end{equation}
    and the identity
    \begin{equation} \label{eq:klSplit}
        \klDiv{\Pi_x}{\Pi_y}
            = \log \frac{N_y}{N_x}
                + \exv_{z \sim \Pi_x}\big(V_y(z) - V_x(z)\big)
    \end{equation}
    splits the divergence into a normalisation ratio and a potential discrepancy.
    As per \eqref{eq:normConstRewrite}, the
    ratio of normalisation constants can be rewritten as
    \begin{equation}    \label{eq:normConstRewriteRestatement}
        \frac{N_y}{N_x}
            = \exv_{z \sim \Pi_x} e^{-(V_y(z) - V_x(z))}.
    \end{equation}
    Plugging \eqref{eq:normConstRewriteRestatement} into \eqref{eq:pinsker},
    \begin{equation}    \label{eq:klIntermediate01}
        \klDiv{\Pi_x}{\Pi_y}
            = \log \exv_{z \sim \Pi_x} e^{-(V_y(z) - V_x(z))}
                + \exv_{z \sim \Pi_x} \big(V_y(z) - V_x(z)\big).
    \end{equation}
    By definition of the surrogate potentials
    \eqref{eq:surrogatePotentialDef},
    \begin{equation}    \label{eq:VDiffSameArgument}
        V_y(z) - V_x(z)
            = \frac{\gamma}{2} \big(\enorm{z - y}^2 - \enorm{z - x}^2\big) 
            = \gamma \iprod{x - y, z} - c_{x,y},
    \end{equation}
    where
    \(
        c_{x, y} \coloneqq \tfrac{\gamma}{2}
            \big(\enorm{x}^2 - \enorm{y}^2\big).
    \)
    Inserting \eqref{eq:VDiffSameArgument} into
    \eqref{eq:klIntermediate01}, the constants cancel, leaving
    \begin{equation}   \label{eq:klIntermediate02}
        \klDiv{\Pi_x}{\Pi_y}
            = \log \exv_{z \sim \Pi_x}
                e^{-\gamma \iprod{x - y,\;z}}
                + \gamma \iprod{x - y,\;\bar{u}_x}.
    \end{equation}
    Writing \(z = \bar{u}_x + (z - \bar{u}_x)\) inside
    the logarithm, the deterministic contribution
    \(-\gamma\iprod{x - y,~\bar{u}_x}\) exits the
    expectation and cancels with the second term, giving
    \begin{equation}    \label{eq:klMeanFactored}
        \klDiv{\Pi_x}{\Pi_y}
            = \log \exv_{z \sim \Pi_x}
                e^{-\gamma \iprod{x - y,\;z - \bar{z}_x}}.
    \end{equation}
    Since \(g\) is \(m\)-strongly convex, we may write
    \(g = \tfrac{m}{2}\enorm{\cdot}^2 + \varphi\) with
    \(\varphi\) convex.
    Expanding the surrogate potential and completing the square
    in~\(z\) shows that
    \begin{equation}    \label{eq:surrogateDecomposition}
        \pi_x(z)
            \propto e^{-\theta \varphi (z)}
            e^{
                -\frac{\gamma + \theta m}{2}
                    \enorm{z - c_x}^2
            },
    \end{equation}
    where \(c_x \coloneqq \gamma x / (\gamma + \theta m)\).
    The second factor is proportional to the density of
    \(\Gamma_x \coloneqq
        \mc{N}\!\big(c_x,\, (\theta m + \gamma)^{-1} \mbb{I}\big)\),
    and the first is log-concave.
    Harg\'{e}'s inequality
    \cite[Theorem~1.1]{harge2004convex} therefore gives,
    for any convex \(\psi\),
    \begin{equation}    \label{eq:hargeApplication}
        \exv_{z \sim \Pi_x} \psi(z - \bar{z}_x)
            \le \exv_{w \sim \Gamma_x} \psi(w - c_x).
    \end{equation}
    Applying \eqref{eq:hargeApplication} with the convex function
    \(\psi \colon u \mapsto e^{-\gamma \iprod{x - y,\; u}}\)
    and evaluating the resulting Gaussian moment generating function
    yields
    \begin{equation}    \label{eq:hargeBound01}
        \exv_{z \sim \Pi_x}
            e^{-\gamma \iprod{x - y,\;z - \bar{z}_x}}
        \le \exv_{w \sim \mc{N}(0,\, (\theta m + \gamma)^{-1} \mbb{I})}
            e^{-\gamma \iprod{x - y,\;w}}
        = e^{\frac{\gamma^2}{2(\theta m + \gamma)}
            \,\enorm{x - y}^2}.
    \end{equation}
    Inserting \eqref{eq:hargeBound01} into
    \eqref{eq:klMeanFactored} and the result into
    \eqref{eq:pinsker}, and taking the square root yields
    the claim.
\end{proof}

Secondly, we show that the mode displacement of
the localised surrogate measure is entirely controlled by
the regularisation and tempering parameters. Denote the modes
\(a_x \coloneqq \argmin{\R^d} V_x\).

\begin{lemma}   \label{lem:meanModeDistance}
    Let \(\{\pi_x\}_{x \in \R^d}\) be a collection of
    strongly log-concave surrogate densities
    (Definition~\ref{def:stronglyLogConcave}).
    Then
    \begin{equation}    \label{eq:meanModeDistance}
        \enorm{x - a_x}
            \le \frac{\theta M}{\gamma + \theta m}
                \;\enorm{x - \hat{x}}.
    \end{equation}
\end{lemma}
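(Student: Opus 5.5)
The plan is to use the first-order optimality condition for the mode \(a_x\) together with strong monotonicity of \(\nabla V_x\). Since \(V_x\) is \((\gamma + \theta m)\)-strongly convex and differentiable, its unique minimiser satisfies
\begin{equation}
    \nabla V_x(a_x) = \theta \nabla g(a_x) + \gamma (a_x - x) = 0 .
\end{equation}
We also evaluate the gradient at the current state, where the quadratic penalty vanishes:
\begin{equation}
    \nabla V_x(x) = \theta \nabla g(x).
\end{equation}
Because \(\hat x\) minimises \(g\), we have \(\nabla g(\hat x) = 0\). The \(M\)-Lipschitz continuity of \(\nabla g\) then gives \(\enorm{\nabla V_x(x)} = \theta\enorm{\nabla g(x) - \nabla g(\hat x)} \le \theta M \enorm{x - \hat x}\).

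Next we apply strong monotonicity of \(\nabla V_x\) with modulus \(\gamma + \theta m\), which combines \(m\)-strong convexity of \(\theta g\) (scaled by \(\theta\)) with the \(\gamma\)-quadratic. Evaluating at the pair \(x, a_x\) yields
\begin{equation}
    (\gamma + \theta m)\enorm{x - a_x}^2 \le \iprod{\nabla V_x(x) - \nabla V_x(a_x),\, x - a_x} = \iprod{\nabla V_x(x),\, x - a_x}.
\end{equation}
Cauchy--Schwarz bounds the right-hand side by \(\enorm{\nabla V_x(x)}\,\enorm{x - a_x}\). Dividing by \(\enorm{x - a_x}\) (the case \(x = a_x\) being trivial) gives \((\gamma + \theta m)\enorm{x - a_x} \le \theta M \enorm{x - \hat x}\), which is \eqref{eq:meanModeDistance}.

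There is no real obstacle. The only points needing care are existence and uniqueness of \(a_x\), which follow from strong convexity and continuity of \(V_x\), and the precise monotonicity modulus \(\gamma + \theta m\). Strong convexity of \(g\) gives monotonicity of \(\nabla g\) with modulus \(m\), so \(\theta \nabla g\) has modulus \(\theta m\), and the penalty gradient \(\gamma(\cdot - x)\) contributes exactly \(\gamma\).
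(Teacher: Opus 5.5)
Your proof is correct. It uses the same ingredients as the paper: the optimality condition $\nabla V_x(a_x)=0$, the identity $\nabla g(\hat x)=0$ together with $M$-Lipschitz continuity of $\nabla g$, and $(\gamma+\theta m)$-strong convexity of $V_x$. You arrange them more economically, though.

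The paper passes through the intermediate quantity $\enorm{a_x-\hat x}$ in two steps:
\begin{itemize}
\item Optimality at $a_x$ gives $\gamma(x-a_x)=\theta\nabla g(a_x)$, and smoothness between $a_x$ and $\hat x$ then gives $\enorm{x-a_x}\le\frac{\theta M}{\gamma}\enorm{a_x-\hat x}$.
\item The strong-convexity estimate $\enorm{a_x-y}\le(\gamma+\theta m)^{-1}\enorm{\nabla V_x(y)}$ is applied at $y=\hat x$. There $\nabla V_x(\hat x)=\gamma(\hat x-x)$, so $\enorm{a_x-\hat x}\le\frac{\gamma}{\gamma+\theta m}\enorm{x-\hat x}$, and the factors of $\gamma$ cancel when the two bounds are chained.
\end{itemize}

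You apply the strong-monotonicity estimate directly at $y=x$ instead. There the penalty gradient vanishes and $\nabla V_x(x)=\theta\nabla g(x)$ is controlled by smoothness between $x$ and $\hat x$. A single inequality therefore suffices, with no cancellation, and the resulting constant is identical.

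What the paper's route buys is the by-product $\enorm{a_x-\hat x}\le\frac{\gamma}{\gamma+\theta m}\enorm{x-\hat x}$: the localised mode is contracted toward $\hat x$ by that factor. The paper does not use this elsewhere; Lemmas~\ref{lem:surrogateWarmness} and~\ref{lem:temperedAcceptance} only need $\enorm{x-a_x}$. For the stated bound, your argument is the cleaner one.
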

\begin{proof}
    First-order optimality for \(a_x\) gives
    \(\nabla V_x(a_x) = \theta \nabla g(a_x)
        + \gamma (a_x - x) = 0\),
    which yields
    \begin{equation}    \label{eq:optCondition}
        \gamma (x - a_x) = \theta \, \nabla g(a_x).
    \end{equation}
    Since \(\nabla g(\hat{x}) = 0\) and \(g\) is \(M\)-smooth,
    \begin{equation}    \label{eq:gradBound}
        \enorm{\nabla g(a_x)}
            = \enorm{\nabla g(a_x) - \nabla g(\hat{x})}
            \le M \enorm{a_x - \hat{x}}.
    \end{equation}
    Combining \eqref{eq:optCondition} and \eqref{eq:gradBound},
    \begin{equation}    \label{eq:simpleMeanMode}
        \enorm{x - a_x}
            \le \frac{\theta M}{\gamma}\;\enorm{a_x - \hat{x}}.
    \end{equation}
    On the other hand,
    \(\nabla V_x(\hat{x})
        = \theta \nabla g(\hat{x}) + \gamma(\hat{x} - x)
        = \gamma(\hat{x} - x)\).
    Since \(V_x\) is \((\gamma + \theta m)\)-strongly convex
    with minimiser \(a_x\),
    \begin{equation}    \label{eq:scBound}
        \enorm{a_x - \hat{x}}
            \le \frac{1}{\gamma + \theta m}
                \;\enorm{\nabla V_x(\hat{x})}
            = \frac{\gamma}{\gamma + \theta m}
                \;\enorm{x - \hat{x}}.
    \end{equation}
    Inserting \eqref{eq:scBound} into \eqref{eq:simpleMeanMode}
    yields the claim.
\end{proof}

Finally, we record a warmness estimate for a Gaussian 
measure centred at \(x \in \R^d\), with respect to the 
corresponding surrogate measure \(\Pi_x\).
The mode displacement from Lemma~\ref{lem:meanModeDistance} governs
the resulting warmness constant.

\begin{lemma}   \label{lem:surrogateWarmness}
    Let \(\{\pi_x\}_{x \in \R^d}\) be a collection of 
    strongly log-concave surrogate densities
    (Definition~\ref{def:stronglyLogConcave}), and write
    \(\kappa_V \coloneqq (\gamma + \theta M)/(\gamma + \theta m)\).
    Then for all \(x \in \R^d\), the Gaussian measure
    \(\mc{N}\big(x,\, (2(\gamma + \theta M))^{-1} \mbb{I}\big)\)
    is \(\beta\)-warm with respect to \(\Pi_x\), where
    \begin{equation} \label{eq:warm:beta}
        \log \beta
            \le \frac{d}{2}\,\log(2\kappa_V)
                + \frac{\kappa_V \, \theta^2 M^2}{\gamma}
                    \;\enorm{x - \hat{x}}^2.
    \end{equation}
\end{lemma}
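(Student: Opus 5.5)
We bound the density ratio \(\varphi/\pi_x\) directly, with \(\varphi\) the density of \(\mc{N}\big(x,\,(2(\gamma+\theta M))^{-1}\mbb{I}\big)\). Write \(\sigma^{-2} \coloneqq 2(\gamma+\theta M)\), so that
\[
    \varphi(y) = \big(\sigma^{-2}/(2\pi)\big)^{d/2}\, e^{-\frac{\sigma^{-2}}{2}\enorm{y-x}^2},
    \qquad
    \pi_x(y) = N_x^{-1} e^{-V_x(y)}.
\]
This gives
\[
    \log\frac{\varphi(y)}{\pi_x(y)} = \tfrac{d}{2}\log\tfrac{\sigma^{-2}}{2\pi} + \log N_x + V_x(y) - (\gamma+\theta M)\enorm{y-x}^2.
\]
The proof splits into three steps: an upper bound on \(V_x(y)\) that is uniform in \(y\), an upper bound on \(N_x\), and a maximisation over \(y\).

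\textbf{Step 1: upper bound on \(V_x\).} Since \(V_x\) is \((\gamma+\theta M)\)-smooth with minimiser \(a_x\),
\[
    V_x(y) \le V_x(a_x) + \tfrac{\gamma+\theta M}{2}\enorm{y-a_x}^2.
\]

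\textbf{Step 2: upper bound on \(N_x\).} Since \(V_x\) is \((\gamma+\theta m)\)-strongly convex,
\[
    V_x \ge V_x(a_x) + \tfrac{\gamma+\theta m}{2}\enorm{\cdot - a_x}^2,
    \qquad\text{hence}\qquad
    N_x \le e^{-V_x(a_x)}\big(2\pi/(\gamma+\theta m)\big)^{d/2}.
\]
The terms \(V_x(a_x)\) cancel. The constants combine to \(\tfrac{d}{2}\log\big(2(\gamma+\theta M)/(\gamma+\theta m)\big) = \tfrac d2\log(2\kappa_V)\), which is the first term of \eqref{eq:warm:beta}.

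\textbf{Step 3: maximisation over \(y\).} What remains is
\[
    \sup_y\Big[\tfrac{\gamma+\theta M}{2}\enorm{y-a_x}^2 - (\gamma+\theta M)\enorm{y-x}^2\Big].
\]
This is a concave quadratic in \(y\), because the factor doubling in the Gaussian precision makes the negative term dominate. The elementary identity \(\sup_y\big[\tfrac a2\enorm{y-u}^2 - a\enorm{y-v}^2\big] = a\enorm{u-v}^2\) evaluates it to \((\gamma+\theta M)\enorm{x-a_x}^2\).

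We then insert Lemma~\ref{lem:meanModeDistance}:
\[
    (\gamma+\theta M)\enorm{x-a_x}^2 \le (\gamma+\theta M)\frac{\theta^2M^2}{(\gamma+\theta m)^2}\enorm{x-\hat x}^2 = \kappa_V\,\frac{\theta^2M^2}{\gamma+\theta m}\enorm{x-\hat x}^2 \le \frac{\kappa_V\theta^2M^2}{\gamma}\enorm{x-\hat x}^2.
\]
This is the second term of \eqref{eq:warm:beta}. Continuity of \(\varphi\), required by Definition~\ref{def:warm}, is immediate.

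The main obstacle is Step 3. It needs a clean quadratic supremum whose constant matches the claimed bound, and it only works because the warm-start Gaussian is chosen twice as concentrated as \(V_x\) is smooth; without that factor the supremum would be infinite. The normalising-constant comparison in Step 2 is routine once both quadratic envelopes are centred at the same point \(a_x\).
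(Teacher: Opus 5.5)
Your proof is correct and follows the paper's route. The paper obtains the intermediate bound \(\log\beta \le \tfrac{d}{2}\log(2\kappa_V) + (\gamma+\theta M)\enorm{x-a_x}^2\) by citing the displaced-Gaussian density-ratio computation of \cite[Section~3.2.1]{dwivedi2019log}, which your Steps~1--3 carry out explicitly and correctly (including the quadratic supremum \(a\enorm{u-v}^2\)), and then finishes exactly as you do via Lemma~\ref{lem:meanModeDistance} and \(\gamma+\theta m \ge \gamma\).
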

\begin{proof}
    The surrogate potential \(V_x\) is
    \((\gamma + \theta m)\)-strongly convex and
    \((\gamma + \theta M)\)-smooth with minimiser~\(a_x\).
    In \cite[Section~3.2.1]{dwivedi2019log}, the authors
    derive bounds on the density ratio for a Gaussian
    whose mean is displaced by \(\varepsilon > 0\) from the
    target mode, where the target density has a
    \(m_{\star}\)-strongly convex and
    \(L_{\star}\)-smooth potential.
    Applying their result with
    \(
        \varepsilon = \enorm{x - a_x}
    \),
    \(
        m_{\star} = \gamma + \theta m
    \)
    and 
    \(
        L_{\star} = \gamma + \theta M
    \)
    yields
    \begin{equation}    \label{eq:warmIntermediate}
        \log \beta
            \le \frac{d}{2}\,
                \log \frac{2(\gamma + \theta M)}{\gamma + \theta m}
                + (\gamma + \theta M)\;\enorm{x - a_x}^2.
    \end{equation}
    The first term is already \(\tfrac{d}{2}\log(2\kappa_V)\) by construction.
    For the second term, Lemma~\ref{lem:meanModeDistance} bounds the
    mode displacement by
    \(\enorm{x - a_x} \le \tfrac{\theta M}{\gamma + \theta m}\,\enorm{x - \hat x}\),
    so that
    \begin{equation}    \label{eq:warmSecondTerm}
        (\gamma + \theta M)\;\enorm{x - a_x}^2
            \le \frac{(\gamma + \theta M)\,\theta^2 M^2}{(\gamma + \theta m)^2}
                \;\enorm{x - \hat{x}}^2
            = \frac{\kappa_V\,\theta^2 M^2}{\gamma + \theta m}
                \;\enorm{x - \hat{x}}^2
            \le \frac{\kappa_V\,\theta^2 M^2}{\gamma}
                \;\enorm{x - \hat{x}}^2,
    \end{equation}
    where the last step uses \(\gamma + \theta m \ge \gamma\).
    This yields~\eqref{eq:warm:beta} and completes the proof.
\end{proof}
            
        \subsection{Bounding the Acceptance Probability}
            \label{sec:proof:acceptance}
            The acceptance probability of \CHAIN{} is governed by the
target potential difference \(f(z) - f(x)\), the surrogate
correction \(\theta(g(x) - g(z))\), and the normalisation constant
ratio \(\log(N_z / N_x)\).
We begin with a bound on the normalisation constant ratio.
Bounds on the potential differences are folded into the proof
of Lemma~\ref{lem:temperedAcceptance}.

\begin{lemma}   \label{lem:normConstRatioBound}
    Let \(\gamma > 0\) and let \(\varphi: \R^d \to \R\) be convex
    and continuously differentiable.
    For each \(x \in \R^d\), define the potential
    \begin{equation}
        V_x: \R^d \to \R, \quad
            y \mapsto \varphi(y) + \frac{\gamma}{2}~\enorm{y - x}^2.
    \end{equation}
    Let \(\Pi_x\) be the probability measure on
    \((\R^d, \mf{B}(\R^d))\) with density
    \(\pi_x \propto e^{-V_x}\) and normalisation
    constant \(N_x\). Then, for all \(x, y \in \R^d\),
    \begin{equation}
        \log \frac{N_y}{N_x}
            \le \iprod[\big]{
                \exv_{u \sim \Pi_x} \! \nabla \varphi(u),~
                x-y
            }.
    \end{equation}
\end{lemma}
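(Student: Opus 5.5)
The plan is to show that \(x \mapsto \log N_x\) is concave, and then read the claim off as the first-order (supporting hyperplane) inequality for a concave function. The first step is the change of variables \(u = w + x\) in the defining integral:
\begin{equation}
    N_x = \int_{\R^d} e^{-\varphi(w + x) - \frac{\gamma}{2}\enorm{w}^2} \d{w}.
\end{equation}
This places all dependence on \(x\) inside \(\varphi\). The function \((x, w) \mapsto \varphi(w + x) + \frac{\gamma}{2}\enorm{w}^2\) is jointly convex on \(\R^d \times \R^d\), being a convex function composed with a linear map plus a convex quadratic. By the Prékopa--Leindler theorem (marginals of log-concave functions are log-concave), \(F(x) \coloneqq \log N_x\) is therefore concave on \(\R^d\). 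Finiteness of \(N_x\) is implicit in the statement, and it also follows because \(\varphi\), being convex, is bounded below by an affine function, which the Gaussian factor dominates.

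Next, restrict to the segment \(h(t) \coloneqq F(x + t(y - x))\), \(t \in [0,1]\), with \(v \coloneqq y - x\). Since \(h\) is concave, \(F(y) - F(x) = h(1) - h(0) \le h'(0^+)\), and it remains to identify this one-sided derivative. Differentiating under the integral sign in the shifted representation gives
\begin{equation}
    h'(0^+) = -\frac{1}{N_x}\int_{\R^d} \iprod{\nabla\varphi(w + x), v}\, e^{-\varphi(w+x) - \frac{\gamma}{2}\enorm{w}^2}\d{w}
    = \iprod[\big]{\exv_{u \sim \Pi_x}\nabla\varphi(u),\, x - y},
\end{equation}
where the second equality undoes the substitution. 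This is exactly the claimed bound.

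The main obstacle is justifying the derivative computation, i.e.\ differentiation under the integral and integrability of \(\nabla\varphi\) under \(\Pi_x\). To avoid needing two-sided derivatives, I would work only with the difference quotient as \(t \downarrow 0\). Convexity gives the monotone bound
\begin{equation}
    \frac{\varphi(w + x + tv) - \varphi(w + x)}{t} \ge \iprod{\nabla\varphi(w+x), v},
\end{equation}
with the quotient decreasing to \(\iprod{\nabla\varphi(w+x),v}\) as \(t \downarrow 0\). From it we get \(h(t) - h(0) \le \log \exv_{u\sim\Pi_x} e^{-t\iprod{\nabla\varphi(u), v}}\). Dividing by \(t\) and letting \(t \to 0\) yields \(h'(0^+) \le -\exv_{u\sim\Pi_x}\iprod{\nabla\varphi(u), v}\), provided \(\iprod{\nabla\varphi(u), v}\) has a finite exponential moment near \(t = 0\) under \(\Pi_x\).

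To secure that moment, I would use that \(V_x\) is \(\gamma\)-strongly convex, so \(\Pi_x\) has sub-Gaussian tails. Convexity then bounds \(\iprod{\nabla\varphi(u), v}\) from below by \(\varphi(u) - \varphi(u - v)\). Finiteness of \(N\) at shifted points controls the relevant exponential integrals, since the relevant quantity is \(\int e^{-\varphi(u-tv)\dots}\). If needed, I would fall back on approximating \(\varphi\) by convex functions with Lipschitz gradients (for instance Moreau envelopes) and pass to the limit by monotone convergence. Concavity of \(F\) survives the approximation, and the final inequality is stable under the limit.
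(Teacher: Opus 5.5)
Your strategy is sound and takes a genuinely different route from the paper's. The paper writes $N_y/N_x$ as an exponential moment of a linear functional under $\Pi_x$. It compares the centred moment with that of $\mathcal{N}(x,\gamma^{-1}\mathbb{I})$ via Harg\'e's inequality, which gives $\log(N_y/N_x)\le\gamma\langle x-\bar u_x,\,x-y\rangle$. It then substitutes the Stein identity $\gamma(x-\bar u_x)=\mathbb{E}_{u\sim\Pi_x}\nabla\varphi(u)$. You instead obtain concavity of $x\mapsto\log N_x$ from Pr\'ekopa's theorem and read the claim off as a tangent-line inequality. The two are close relatives: concavity of $\log N_x$ is equivalent to $\mathrm{Cov}_{\Pi_x}\preceq\gamma^{-1}\mathbb{I}$ for all $x$, which the same Gaussian comparison delivers. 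What each buys: your route dispenses with Harg\'e's inequality and extends essentially unchanged to any convex localisation penalty $\psi(y-x)$. The paper's route yields the explicit sub-Gaussian moment bound that it reuses in Lemma~\ref{lem:surrogateContinuity}.

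The step that does not go through as written is the identification of $h'(0^+)$. For $\varphi$ merely convex and $C^1$, the exponential moment you need can be infinite for every $t>0$. Take $d=1$, $\varphi(u)=e^{u^2}$ and $v\neq0$. On the half-line $uv<0$, the exponent of $e^{-t\varphi'(u)v}\pi_x(u)$ is, up to a constant, $(2t|uv|-1)e^{u^2}-\frac{\gamma}{2}(u-x)^2\to\infty$. Your bound on $h(t)-h(0)$ is then vacuous. The convexity bound $\langle\nabla\varphi(u),v\rangle\ge\varphi(u)-\varphi(u-v)$ cannot rescue this, because it introduces $e^{+t\varphi(u-v)}$. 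Finiteness of $N$, an integral of $e^{-\varphi}$, does not control that factor. The Moreau fallback also falls short as stated. With $\varphi_\epsilon$ the envelope and $\Pi^\epsilon_x$ the corresponding measure, monotone convergence gives $N^\epsilon_x\downarrow N_x$. It does not give convergence of the right-hand side $\mathbb{E}_{\Pi^\epsilon_x}\nabla\varphi_\epsilon$, which is not monotone in $\epsilon$.

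The repair is easy: differentiate the unshifted form $N_x=\int e^{-\varphi(u)-\frac{\gamma}{2}\|u-x\|^2}\,du$ instead. There $x$ enters only the Gaussian factor, and domination is immediate because $\varphi$ is bounded below by an affine function. This gives $\nabla\log N_x=\gamma(\bar u_x-x)$, so concavity yields exactly the paper's intermediate bound $\log(N_y/N_x)\le\gamma\langle x-\bar u_x,\,x-y\rangle$. The Stein identity then finishes. It holds here by integrating $\partial_s e^{-V_x}$ along lines: on each line $\partial_s V_x$ changes sign at most once, so everything is integrable and the boundary terms vanish since $V_x$ is $\gamma$-strongly convex. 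Note also that in the paper's only use of the lemma, $\varphi=g/2$ has an $M$-Lipschitz gradient. Then $\langle\nabla\varphi(u),v\rangle$ grows at most linearly and $\Pi_x$ is sub-Gaussian, so your exponential-moment argument is already complete in that setting.
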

\begin{proof}
    As in~\eqref{eq:normConstRewrite}, we express the
    ratio of normalisation constants as an expectation
    over~\(\Pi_x\):
    \begin{equation}    \label{eq:normConstRewrite01}
        \log\frac{N_y}{N_x}
            =  \log \exv_{u \sim \Pi_x} e^{V_x(u) - V_y(u)}.
    \end{equation}
    Writing \(u - y = (u - x) + (x - y)\) and expanding,
    the difference in potentials is
    \begin{equation}    \label{eq:surrogatePotentialDifference}
        V_x(u) - V_y(u)
            = \frac{\gamma}{2}\big(
                \enorm{u-x}^2 - \enorm{u-y}^2
            \big)
            = -\gamma\iprod{x-y,\;u}
                + \gamma\iprod{x-y,\;x}
                - \frac{\gamma}{2}\enorm{x-y}^2.
    \end{equation}
    The constant terms coincide with the negative logarithm of
    the Laplace transform of
    \(\mc{N}(x,\gamma^{-1}\mbb{I})\) evaluated at
    \(\gamma(x-y)\), which allows us to write
    \begin{equation}    \label{eq:normConstRewrite02}
        \log \frac{N_y}{N_x}
            = \log \exv_{u \sim \Pi_x} e^{ -\gamma \iprod{x-y, u} }
                - \log \exv_{ v \sim \mc{N}(x, \gamma^{-1} \mbb{I}) }
                e^{ -\gamma \iprod{x-y, v} }.
    \end{equation}
    We now apply Harg\'{e}'s inequality to bound the first
    term.
    Since \(\varphi\) is convex, \(\pi_x\) is a log-concave
    perturbation of the Gaussian
    \(\mc{N}(x, \gamma^{-1}\mbb{I})\), so
    \cite[Theorem~1.1]{harge2004convex} gives
    \begin{equation}    \label{eq:hargeBound}
        \exv_{u \sim \Pi_x} e^{ -\gamma \iprod{x-y,~u - \bar{u}_x} }
            \le \exv_{ v \sim \mc{N}(x, \gamma^{-1} \mbb{I}) }
            e^{ -\gamma \iprod{x-y,~v-x} }.
    \end{equation}
    Writing \(u = \bar{u}_x + (u - \bar{u}_x)\)
    inside the first term of \eqref{eq:normConstRewrite02}
    and applying \eqref{eq:hargeBound},
    \begin{align}
        \log \exv_{u \sim \Pi_x} e^{ -\gamma \iprod{x-y, u} }
            &= -\gamma \iprod{x-y,~\bar{u}_x}
                + \log \exv_{u \sim \Pi_x}
                    e^{ -\gamma \iprod{x-y,~u-\bar{u}_x} } \\
            &\le -\gamma \iprod{x-y,~\bar{u}_x}
                + \log \exv_{ v \sim \mc{N}(x, \gamma^{-1} \mbb{I}) }
                e^{ -\gamma \iprod{x-y,~v-x} } \\
    \label{eq:hargeBoundImplication}
            &= \gamma \iprod{x-y,~x-\bar{u}_x}
                + \log \exv_{ v \sim \mc{N}(x, \gamma^{-1} \mbb{I}) }
                e^{ -\gamma \iprod{x-y,~v} }.
    \end{align}
    The Laplace transform of the Gaussian density in \eqref{eq:hargeBoundImplication}
    cancels with the second term of
    \eqref{eq:normConstRewrite02}, leaving
    \begin{equation}    \label{eq:logRatioBound02}
        \log \frac{N_y}{N_x}
            \le \gamma \iprod{x-\bar{u}_x,~x-y}.
    \end{equation}
    The proof concludes by substituting the Stein identity
    \eqref{eq:steinIdentity} into \eqref{eq:logRatioBound02}.
\end{proof}

With the normalisation constant ratio under control,
we can bound the expected acceptance probability.

\begin{lemma}\label{lem:temperedAcceptance}
    Let \(f : \R^d \to \R\) be \(\lambda\)-strongly convex
    and \(L\)-smooth with minimiser
    \(x^{\star} \coloneqq \argmin{\R^d} f\),
    and define \(\kappa \coloneqq L / \lambda\).
    Consider an \(m\)-strongly convex 
    and \(M\)-smooth surrogate
    \(g : \R^d \to \R\) satisfying
    \(\lambda \le m \le M \le L\),
    and let \(\hat{x} \coloneqq \argmin{\R^d} g\).
    Fix \(\tau \in (0, 1)\) and parameters
    \(\theta \in (0, 1)\), \(\gamma > 0\),
    and let \(\pi_x\) be the strongly log-concave
    surrogate density (Definition~\ref{def:stronglyLogConcave})
    using \(g\) as the surrogate for~\(f\),
    with normalisation constant~\(N_x\).
    Define
    \(
        \mc{K}_\tau
            \coloneqq \mbb{B}\big(
                x^{\star},~r(\tau,d) \sqrt{d / \lambda}
            \big)
    \)
    and assume that \(g\) approximates
    \(f\) in the sense that
    \begin{center}
    \begin{minipage}{0.8\linewidth}
        \begin{enumerate}[label=(a\arabic*), ref=a\arabic*]
            \item \label{eq:surrogateModeAssmp}
                \(
                    \hat{x} \in \mc{K}_\tau, \quad
                \)
                and
            \item \label{eq:gradientDiffAssmp}
                \(
                    \sup\limits_{x \in \mc{K}_\tau}
                    \! \enorm{\nabla g(x) - \nabla f(x)}
                        \le \eta 
                \) for some \(\eta > 0\).
        \end{enumerate}
    \end{minipage}
    \end{center}
    Define the acceptance probability
    \(
        \alpha(x,y) \coloneqq
        \min\!\big\{
            1,\, e^{-A(x, y)}
        \big\},
    \)
    where
    \begin{equation}
        A(x, y)
            \coloneqq f(y) - f(x)
                + \theta \big (
                    g(x) - g(y)
                \big )
                + \log \frac{N_y}{N_x}.
    \end{equation}
    Then, for any
    \(x \in \mc{K}_\tau\) and \(\varepsilon \in (0, 1)\),
    we have 
    \(
        \exv_{z \sim \Pi_x} \alpha(x,z)
            \ge 1-\varepsilon,
    \)
    provided that the parameter choice satisfies
    \begin{equation}    \label{eq:acceptanceConditions}
        \theta = \frac{1}{2}
    \quad \text{and} \quad
        \gamma
            \ge c(\tau,d) \left(
                    L \kappa
                    + \dfrac{L d}{\varepsilon} 
                    + \dfrac{\eta^2 d}{\varepsilon^2}
            \right),
    \end{equation}
    where \(c(\tau,d) = 4(1+r(\tau,d))^2\).
    
\end{lemma}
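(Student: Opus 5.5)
The plan is to reduce the claim to a first-moment bound on the positive part of \(A(x,z)\). Since \(\min\{1,e^{-a}\}\ge 1-\max\{a,0\}\) for every real \(a\), we have
\[
  \exv_{z\sim\Pi_x}\alpha(x,z)\;\ge\;1-\exv_{z\sim\Pi_x}\max\{A(x,z),0\},
\]
so it suffices to construct a pointwise upper bound \(A(x,z)\le B(x,z)\) with \(B\ge 0\) and to show \(\exv_{z\sim\Pi_x}B(x,z)\le\varepsilon\) under \eqref{eq:acceptanceConditions}.

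To build \(B\), I would bound each of the three terms of \(A\) by something linear in \(z-x\), plus a quadratic remainder.
\begin{itemize}
  \item For the normalisation ratio, I apply Lemma~\ref{lem:normConstRatioBound} with \(\varphi=\theta g\), which is convex and \(C^1\). This gives \(\log(N_z/N_x)\le \theta\iprod{\exv_{u\sim\Pi_x}\nabla g(u),\,x-z}\).
  \item For the target term, \(L\)-smoothness gives \(f(z)-f(x)\le\iprod{\nabla f(x),z-x}+\tfrac L2\enorm{z-x}^2\).
  \item For the surrogate term, convexity of \(g\) gives \(g(x)-g(z)\le\iprod{\nabla g(x),x-z}\).
\end{itemize}
Summing with \(\theta=\tfrac12\), the linear coefficient becomes
\[
  \nabla f(x)-\nabla g(x)\;+\;\tfrac12\bigl(\nabla g(x)-\exv_{u\sim\Pi_x}\nabla g(u)\bigr).
\]
This is the precise sense in which tempering by \(\tfrac12\) makes the pointwise and averaged surrogate gradients balance. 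The first piece is bounded by \(\eta\) by assumption~\eqref{eq:gradientDiffAssmp}, since \(x\in\mc K_\tau\) and the gradient mismatch is only needed at \(x\). The second piece is bounded by \(\tfrac M2\exv\enorm{u-x}\), using global \(M\)-smoothness of \(g\); this needs no localisation of \(u\). Hence
\[
  A(x,z)\le B(x,z)\coloneqq\bigl(\eta+\tfrac M2\,\exv_{u\sim\Pi_x}\enorm{u-x}\bigr)\enorm{z-x}+\tfrac L2\enorm{z-x}^2 .
\]

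Taking expectations and using Cauchy--Schwarz, everything reduces to the single quantity \(S\coloneqq\exv_{z\sim\Pi_x}\enorm{z-x}^2\), since
\[
  \exv B\le \eta\sqrt S+\tfrac{M+L}{2}\,S .
\]
To bound \(S\), I split \(z-x=(z-a_x)+(a_x-x)\).
\begin{itemize}
  \item The fluctuation part: \(V_x\) is \((\gamma+\theta m)\)-strongly convex with minimiser \(a_x\), so a standard strongly log-concave moment bound (e.g.\ via Proposition~\ref{prp:concentration}, or the Brascamp--Lieb/Hargé comparison already used in Lemma~\ref{lem:surrogateContinuity}) gives \(\exv\enorm{z-a_x}^2\lesssim d/\gamma\).
  \item The mode displacement: Lemma~\ref{lem:meanModeDistance} gives \(\enorm{a_x-x}\le\frac{\theta M}{\gamma}\enorm{x-\hat x}\). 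Both \(x\) and \(\hat x\) lie in \(\mc K_\tau\) by~\eqref{eq:surrogateModeAssmp}, so \(\enorm{x-\hat x}\le 2r(\tau,d)\sqrt{d/\lambda}\).
\end{itemize}
Together, using \(M\le L\),
\[
  S\lesssim \frac d\gamma+\frac{(1+r)^2L^2d}{\lambda\gamma^2}.
\]

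The final step checks that each contribution to \(\exv B\) is at most \(\varepsilon/3\), with the constant \(c(\tau,d)=4(1+r)^2\) in \eqref{eq:acceptanceConditions} chosen to absorb the radius factors.
\begin{itemize}
  \item The fluctuation term \(L d/\gamma\) is controlled by \(\gamma\gtrsim Ld/\varepsilon\).
  \item The mode term \(L\cdot L^2 d(1+r)^2/(\lambda\gamma^2)=\kappa L^2 d(1+r)^2/\gamma^2\) is controlled by the product of the two lower bounds \(\gamma\ge cL\kappa\) and \(\gamma\ge cLd/\varepsilon\), which gives \(\gamma^2\ge c^2L^2\kappa d/\varepsilon\).
  \item The gradient-mismatch term \(\eta\sqrt S\) is controlled by \(\gamma\gtrsim\eta^2 d/\varepsilon^2\) for the fluctuation part of \(S\). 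Its mode part, \(\eta (1+r)L\sqrt{d/\lambda}/\gamma\), is handled by AM--GM against the previous two conditions.
\end{itemize}

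I expect the main obstacle to be this bookkeeping: making the \(r(\tau,d)\) factors and the cross term \(\eta\cdot(\text{mode displacement})\) fit exactly into the stated form of \(\gamma\), rather than only up to further constants. A secondary concern is whether the second-moment bound around \(a_x\) (rather than around \(\bar u_x\)) should be imported directly from Proposition~\ref{prp:concentration} by integrating its tail, which would cost a further universal constant.
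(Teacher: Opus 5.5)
Your proposal is correct and is essentially the paper's proof: the same reduction to $\exv_{z\sim\Pi_x}\max\{A(x,z),0\}\le\varepsilon$, the same three bounds (smoothness of $f$, convexity of $g$, and Lemma~\ref{lem:normConstRatioBound} with $\varphi=\tfrac12 g$), the same estimate $\exv A_+\le\eta\sqrt{S}+LS$, and the same split of $S$ via Lemma~\ref{lem:meanModeDistance} together with the sharp moment bound $\exv_{z\sim\Pi_x}\enorm{z-a_x}^2\le d/(\gamma+\theta m)$ of Durmus--Moulines. Use that sharp bound rather than integrating the tail of Proposition~\ref{prp:concentration}, which loses a universal factor. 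The only real difference is bookkeeping: the paper first uses $\gamma\ge L\kappa$ to absorb the mode displacement, which gives $\sqrt{S}\le(1+r(\tau,d))\sqrt{d/(\gamma+\lambda/2)}$ and two decoupled conditions. Your pairing of the conditions also closes with exactly $c(\tau,d)=4(1+r)^2$: using $r\ge 1$, the four resulting terms sum to well below $\varepsilon$, so the constant-matching worry you flag does not materialise.
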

\begin{proof}
    Fix \(x \in \mc{K}_\tau\) and write
    \(\sigma \coloneqq (\gamma + \nicefrac{\lambda}{2})^{-\nicefrac{1}{2}}\)
    and \(A_{+}(x, z) \coloneqq \max \{0, \, A(x,z)\}\).
    Since \(\min \{1,~e^{-t}\} \ge 1 - \max\{0, t\}\) for all
    \(t \in \R\),
    it suffices to show
    \(\exv_{z \sim \Pi_x} A_{+}(x,z) \le \varepsilon\).
    Setting \(\theta = 1/2\), \(L\)-smoothness of \(f\) gives
    \(
        f(z) - f(x)
            \le \iprod{-\nabla f(x),\, x - z}
                + \frac{L}{2} \enorm{x - z}^2
    \),
    convexity of \(g\) gives
    \(
        \frac{1}{2}(g(x) - g(z))
            \le \frac{1}{2}\iprod{\nabla g(x),\, x - z}
    \),
    and Lemma~\ref{lem:normConstRatioBound} applied with
    \(\varphi = \frac{1}{2} g\) gives
    \(
        \log(N_z / N_x)
            \le \frac{1}{2}\iprod{
                \exv_{u \sim \Pi_x} \nabla g(u),\;
                x-z
            }
    \).
    Combining these three bounds yields
    \begin{equation}    \label{eq:ABound}
        A(x, z)
            \le \iprod{w,\, x - z}
                + \frac{L}{2} \enorm{x - z}^2,
    \end{equation}
    where, after regrouping the gradient terms,
    \begin{equation}    \label{eq:wExplicit}
        w
            \coloneqq \nabla g(x) - \nabla f(x)
                + \frac{1}{2} \exv_{u \sim \Pi_x} \! \big(
                    \nabla g(u) - \nabla g(x)
                \big).
    \end{equation}
    By~\eqref{eq:gradientDiffAssmp},
    \(M\)-smoothness of~\(g\), and Jensen's inequality,
    \begin{equation}    \label{eq:normWBound}
        \enorm{w}
            \le \eta
                + \tfrac{M}{2} \sqrt{S},
    \end{equation}
    where
    \(
        S \coloneqq \exv_{z \sim \Pi_x} \enorm{z - x}^2.
    \)
    Cauchy--Schwarz followed by Jensen's inequality gives
    \begin{equation}    \label{eq:expectedA}
        \exv_{z \sim \Pi_x} A_{+}(x,z)
            \le \enorm{w}\sqrt{S}
                + \tfrac{L}{2} S
            \le \eta \sqrt{S} + L S,
    \end{equation}
    where the last step
    substitutes~\eqref{eq:normWBound}
    and uses \(M \le L\).
    It remains to bound \(S\) in terms of the algorithm
    parameters.
    The triangle inequality gives
    \(\sqrt{S}
        \le \enorm{x - a_x}
            + \sqrt{\exv \enorm{z - a_x}^2}\).
    Since both \(x\) and \(\hat{x}\) lie in \(\mc{K}_\tau\) by
    \eqref{eq:surrogateModeAssmp}, the triangle inequality yields
    \(\enorm{x - \hat{x}} \le \mrm{diam}(\mc{K}_\tau) = 2 r(\tau,d)\sqrt{d/\lambda}\).
    Combining with Lemma~\ref{lem:meanModeDistance},
    \begin{equation}    \label{eq:modeDistance}
        \enorm{x - a_x}
            \le \frac{\theta M}{\gamma + \theta m}
                \;\enorm{x - \hat{x}}
            \le \frac{L}{2}\,\sigma^2
                    \cdot 2 r(\tau,d) \sqrt{d/\lambda}
            = r(\tau,d) L \sqrt{d/\lambda}\;\sigma^2,
    \end{equation}
    where the second inequality uses \(\theta M \le L/2\),
    \(M \le L\), and \((\gamma + m/2)^{-1} \le \sigma^2\)
    (the last since \(m \ge \lambda\)).
    From 
    \cite[Proposition~1(ii)]{durmus2019high},
    \(
        \exv \enorm{z - a_x}^2
        \le \sigma^2 d,
    \)
    so
    \begin{equation}    \label{eq:sqrtSBound}
        \sqrt{S}
            \le r(\tau,d) L \sqrt{d / \lambda}
                \; \sigma^2
            + \sigma \sqrt{d}
            = \sigma \sqrt{d}
                \Big(
                    1 + r(\tau,d) \sqrt{L \kappa}
                        \; \sigma
                \Big).
    \end{equation}
    Write \(c_s \coloneqq 1 + r(\tau,d)\).
    We first impose \(\gamma \ge L\kappa\). Since
    \(\sigma^{-2} = \gamma + \lambda/2 \ge \gamma\), this gives
    \(L\kappa\,\sigma^2 \le 1\), i.e.\ \(\sqrt{L\kappa}\,\sigma \le 1\),
    so inequality~\eqref{eq:sqrtSBound} simplifies to
    \(\sqrt{S} \le c_s\,\sigma\sqrt{d}\). Substitution
    into~\eqref{eq:expectedA} gives
    \begin{equation}
        \exv_{z\sim\Pi_x} A_{+}(x,z)
            \le c_s\,\eta\,\sigma\sqrt{d}
                + c_s^2\,L\,\sigma^2 d.
    \end{equation}
    Bounding each of the two summands by \(\varepsilon/2\), and again
    using \(\sigma^{-2} = \gamma + \lambda/2 \ge \gamma\), yields
    \begin{equation}    \label{eq:twoRequirements}
        \gamma \ge 2 c_s^2 L d / \varepsilon
        \qquad \text{and} \qquad
        \gamma \ge 4 c_s^2 \eta^2 d / \varepsilon^2.
    \end{equation}
    Together with \(\gamma \ge L\kappa\), all three conditions are
    implied by~\eqref{eq:acceptanceConditions}, since
    \(c(\tau,d) \ge 1\), \(c(\tau,d) \ge 2c_s^2\) and \(c(\tau,d) \ge 4c_s^2\).
    Therefore \(\exv_{z\sim\Pi_x}\alpha(x,z) \ge 1-\varepsilon\).
\end{proof}
            
        \subsection{Proof of Theorem~\ref{thm:mixingTime}}
            \label{sec:proof:main}
            Throughout, write \(\mc{T}_x\) for the MH
transition measure that proposes from
\(\mc{P}_x \coloneqq Q_x^{n}\mu_x\) and accepts with probability
\(\alpha\) as in \eqref{eq:dartAcceptance} and \(\ideal{\mc{T}}_x\) for
the transition measure that proposes from \(\Pi_x\) and accepts
with this same probability. Since \(\alpha\) is the
exact MH ratio for proposals drawn from
\(\Pi_x\), the kernel \(\ideal{\mc{T}}_x\) satisfies detailed
balance with respect to \(\Pi\), so the chain is \(\Pi\)-reversible.
The proof proceeds in three stages. We first bound the mixing time of
\(\ideal{T}\), then transfer the bound to \(T\) via the
perturbation estimate of Lemma~\ref{lem:chainPerturbation}, and
finally, verify that condition~\eqref{eq:surrogateStepsCondition} supplies
the required root chain accuracy.

Define
\(\mc{K}_\tau \coloneqq \mbb{B}\big(x^\star,\, r(\delta/(4\beta), d)\sqrt{d/\lambda}\big)\).
Under the constraint \(\log\tfrac{2\beta}{\delta} \le d\) we have
\(\log\tfrac{4\beta}{\delta}/d \le 1 + \tfrac{\log 2}{d} \le 1 + \log 2\),
so
\begin{equation}    \label{eq:uniformRBound}
    r(\delta/(4\beta), d)^2
        \le 1 + 2\sqrt{1 + \log 2} + 2(1 + \log 2)
        \le 9,
\end{equation}
giving \(r(\delta/(4\beta), d) \le 3\) and hence the inclusion
\(\mc{K}_\tau \subset \mc{K}\). Consequently,
Proposition~\ref{prp:concentration} implies
\begin{equation}    \label{eq:targetMassSatisfied}
    \Pi(\mc{K})
        \ge \Pi(\mc{K}_\tau)
        \ge 1 - \frac{\delta}{4 \beta}.
\end{equation}
We next establish the TV-continuity of the measures
\(\ideal{\mc{T}}_x\) on \(\mc{K}\). Since \(\ideal{\mc{T}}_x\)
corresponds to MH proposals from \(\Pi_x\), the set
\(\{x\}\) is an atom of \(\ideal{\mc{T}}_x\) with mass
\(1 - \exv_{z \sim \Pi_x}\alpha(x,z)\), while on
\(\R^d \setminus \{x\}\) the transition measure admits the density
\(\alpha(x, \cdot)\,\pi_x \le \pi_x\). A direct computation gives
\begin{equation}    \label{eq:idealisedTvToExv}
    \tvnorm{\ideal{\mc{T}}_x - \Pi_x}
        = 1 - \exv_{z \sim \Pi_x}\alpha(x, z).
\end{equation}
By the triangle inequality, for \(y_1, y_2 \in \mc{K}\),
\begin{equation}    \label{eq:idealisedDecomposition}
    \tvnorm{\ideal{\mc{T}}_{y_1} - \ideal{\mc{T}}_{y_2}}
        \le \underbrace{
            2\Big(1 - \inf_{x \in \mc{K}}
                \exv_{z \sim \Pi_x}\alpha(x,z)\Big)
        }_{I}
        + \underbrace{
            \tvnorm{\Pi_{y_1} - \Pi_{y_2}}
        }_{II}.
\end{equation}
It suffices to show that each term is at most \(1/4\), so that
\(\tvnorm{\ideal{\mc{T}}_{y_1} - \ideal{\mc{T}}_{y_2}} \le 1/2\).

For Term~\(I\), choose \(\tau^\star \in (0,1)\) so that
\(r(\tau^\star, d) = 3\), whence \(\mc{K}_{\tau^\star} = \mc{K}\)
and \(c(\tau^\star, d) = 4(1 + 3)^2 = 64\). The gradient fidelity
condition~\eqref{eq:fidelityCondition} supplies
\(\eta^2 \lesssim L\max\{1, \kappa/d\}\) in
assumption~\eqref{eq:gradientDiffAssmp}, and the localisation
condition~\eqref{eq:localisationCondition} gives
\(\theta = 1/2\) and \(\gamma \gtrsim L\max\{\kappa, d\}\). Applying
Lemma~\ref{lem:temperedAcceptance} with \(\varepsilon = 1/8\), the
requirement~\eqref{eq:acceptanceConditions} reads
\(\gamma \ge 64\big(L\kappa + 8 L d + 64\,\eta^2 d\big)\), and since
\(\eta^2 d \lesssim L\max\{d, \kappa\}\), each summand is bounded by
a multiple of \(L\max\{\kappa, d\}\). Thus
\eqref{eq:localisationCondition} implies the requirement. We obtain
\begin{equation}    \label{eq:acceptanceLowerBound}
    \inf_{x \in \mc{K}}
        \exv_{z \sim \Pi_x}\alpha(x,z) \ge \frac{7}{8},
\end{equation}
and therefore \(I \le 2(1 - 7/8) = 1/4\).

For Term~\(II\), Lemma~\ref{lem:surrogateContinuity} gives
\begin{equation}    \label{eq:surrogateLipschitz}
    \tvnorm{\Pi_{y_1} - \Pi_{y_2}}
        \le \frac{\gamma}{2\sqrt{\gamma + \theta m}}
            \;\enorm{y_1 - y_2}.
\end{equation}
Since \(\gamma \gtrsim L \ge m = 2\theta m\), the Lipschitz constant
satisfies \(\frac{\gamma}{2\sqrt{\gamma + \theta m}} \lesssim \sqrt{\gamma}\),
so \(II \le 1/4\) whenever
\begin{equation}    \label{eq:contractionDistance}
    \enorm{y_1 - y_2}
        \lesssim \gamma^{-1/2}
        \eqqcolon t.
\end{equation}
Collecting the two bounds establishes
\(\tvnorm{\ideal{\mc{T}}_{y_1} - \ideal{\mc{T}}_{y_2}} \le 1/2\)
whenever \(\enorm{y_1 - y_2} \le t\) with \(t \eqsim \gamma^{-1/2}\),
for all \(y_1, y_2 \in \mc{K}\). A lazy version of \(\ideal{T}\)
(c.f.~Appendix~\ref{app:aux}) satisfies
continuity with constant \(3/4\), which is the
hypothesis~\eqref{eq:mixingContinuity} of
Lemma~\ref{lem:generalMixingBound}. Since \(\ideal{T}\) is
\(\Pi\)-reversible, lazy, and \eqref{eq:targetMassSatisfied} supplies
the mass requirement~\eqref{eq:targetMassAssumption} at level
\(\delta/2\), Lemma~\ref{lem:generalMixingBound} applies to
\(\ideal{T}\) with tolerance \(\delta/2\), giving
\begin{equation}    \label{eq:idealisedMixing}
    \tvnorm{\ideal{T}^N \mu - \Pi} \le \frac{\delta}{2}
    \quad \text{for all} \quad
    N \ge N_\delta
        \coloneqq \Big\lceil
            C\, t^{-2}\lambda^{-1}\log\tfrac{4\beta}{\delta}
        \Big\rceil,
\end{equation}
where \(C > 0\) is the universal constant of
Lemma~\ref{lem:generalMixingBound}. Since \(t \eqsim \gamma^{-1/2}\)
and \(\gamma \eqsim L\max\{\kappa, d\}\),
\begin{equation}    \label{eq:NdeltaSize}
    N_\delta
        \eqsim \frac{\gamma}{\lambda}\log\frac{4\beta}{\delta}
        \eqsim \kappa \max\{\kappa, d\}\log\frac{2\beta}{\delta}.
\end{equation}

Both chains, \(\ideal{T}\) and \(T\), accept with the common probability
\(\alpha(x, \cdot)\). Hence, for each measurable \(A\), \(\mc{T}_x(A)\) is the integral of
the \([0,1]\)-valued function
\(z \mapsto \alpha(x, z)\,\delta_z(A) + (1 - \alpha(x, z))\,\delta_x(A)\)
against \(\mc{P}_x\), and \(\ideal{\mc{T}}_x(A)\) the integral of the
same function against \(\Pi_x\). The functional bound
\eqref{eq:tvFunctionalBound} and a supremum over \(A\) give
\begin{equation}    \label{eq:kernelGap}
    \tvnorm{\mc{T}_x - \ideal{\mc{T}}_x}
        \le \tvnorm{\mc{P}_x - \Pi_x}.
\end{equation}
Define the enlarged region
\(\mc{K}_N \coloneqq \mbb{B}\big(x^\star,\,
    r(\delta/(4\beta N_\delta), d)\sqrt{d/\lambda}\big)\).
Proposition~\ref{prp:concentration} gives
\(\Pi(\cmpl{\mc{K}_N}) \le \delta/(4\beta N_\delta)\). Applying the
perturbation estimate of Lemma~\ref{lem:chainPerturbation} to the
lazy chains, whose per-step difference halves that of the kernels
\(\mc{T}_x, \ideal{\mc{T}}_x\) (Appendix~\ref{app:aux}), with
\(X = \mc{K}_N\), \(\nu = \Pi\), and \(N = N_\delta\), and
substituting \eqref{eq:kernelGap},
\begin{equation}    \label{eq:perturbationSplit}
    \tvnorm{T^{N_\delta}\mu - \ideal{T}^{N_\delta}\mu}
        \le N_\delta \sup_{x \in \mc{K}_N}
            \tfrac12\,\tvnorm{\mc{T}_x - \ideal{\mc{T}}_x}
        + \beta N_\delta\,\Pi(\cmpl{\mc{K}_N})
        \le \frac{N_\delta}{2} \sup_{x \in \mc{K}_N}
            \tvnorm{\mc{P}_x - \Pi_x}
        + \frac{\delta}{4}.
\end{equation}
It remains to make the first term at most \(\delta/4\).
Under \eqref{eq:localisationCondition}, the condition number of each
surrogate density \(\pi_x\) is uniformly bounded:
\begin{equation}    \label{eq:surrogateConditionNumber}
    \kappa_x
        \coloneqq
        \frac{\gamma + \theta M}{\gamma + \theta m}
        \le 1 + \frac{M}{2\gamma}
        \lesssim 1,
\end{equation}
since \(\gamma \gtrsim L \ge M\) and in particular
\(\tilde{\kappa} \coloneqq \sup_x \kappa_x \lesssim 1\).
For \(x \in \mc{K}_N\), the triangle inequality through \(x^\star\),
using \(x \in \mc{K}_N\) and the standing assumption \(\hat x \in \mc{K}\),
gives
\begin{equation}    \label{eq:KNdiameter}
    \enorm{x - \hat x}^2
        \le \big(
            r(\delta/(4\beta N_\delta), d) + 3
        \big)^2 \frac{d}{\lambda}
        \lesssim \frac{d + \log(2 N_\delta)}{\lambda},
\end{equation}
where the last step uses
\(r(\delta/(4\beta N_\delta), d)^2 \lesssim 1 + \log(4\beta N_\delta/\delta)/d\)
and \(\log(4\beta/\delta) \le 2d\) (valid for all \(d \ge 1\) under
\(\log(2\beta/\delta) \le d\)). Lemma~\ref{lem:surrogateWarmness}
then gives, with \(\kappa_x \lesssim 1\) and
\(\theta^2 M^2/\gamma \le L^2/(4\gamma) \lesssim L/\max\{\kappa, d\}\)
from~\eqref{eq:localisationCondition},
\begin{equation}    \label{eq:warmnessBound}
    \log \beta_x
        \le \frac{d}{2}\log(2\kappa_x)
            + \frac{\kappa_x\,\theta^2 M^2}{\gamma}\,\enorm{x - \hat x}^2
        \lesssim d
            + \frac{L}{\max\{\kappa, d\}}\cdot
                \frac{d + \log(2N_\delta)}{\lambda}.
\end{equation}
Since \(L/(\lambda\max\{\kappa, d\}) = \kappa/\max\{\kappa, d\} \le 1\),
the second term is \(\lesssim d + \log(2N_\delta)\), so
\begin{equation}    \label{eq:logWarmness}
    \sup_{x \in \mc{K}_N} \log \beta_x
        \lesssim d + \log(2 N_\delta).
\end{equation}
Now apply Condition~\ref{cnd:root} at each \(x \in \mc{K}_N\), with
the Gaussian initial measure \(\mu_x\) of
Lemma~\ref{lem:surrogateWarmness} and accuracy
\(\varepsilon_{\mathrm{root}} \coloneqq \delta/(2 N_\delta)\). A
common root chain length \(n\) is admissible provided
\begin{equation}    \label{eq:nRequirement}
    n \gtrsim d^{\omega}\tilde{\kappa}^{\tilde{\omega}}
        \sup_{x \in \mc{K}_N}
            \log\frac{2\beta_x}{\varepsilon_{\mathrm{root}}}
        \lesssim d^{\omega}\Big(
            d + \log(2N_\delta) + \log\tfrac{N_\delta}{\delta}
        \Big),
\end{equation}
where we used \(\tilde{\kappa} \lesssim 1\) and
\eqref{eq:logWarmness}. By \eqref{eq:NdeltaSize} and
\(\max\{\kappa, d\} \le \kappa d\), \(\log(2\beta/\delta) \le d\),
we have \(N_\delta \lesssim \kappa^2 d^2\), so
\(\log(2N_\delta) \lesssim 1 + \log\kappa + \log d\) and
\(\log(N_\delta/\delta) \lesssim 1 + \log(\kappa/\delta) + \log d\).
Since \(\delta < 1\) gives \(\log\kappa \le \log(\kappa/\delta)\), and
\(1, \log d \le d\), both terms are \(\lesssim d + \log(\kappa/\delta)\).
Hence the right-hand side
of~\eqref{eq:nRequirement} is \(\lesssim d^{\omega}(d + \log(\kappa/\delta))\),
which is exactly~\eqref{eq:surrogateStepsCondition}. With this \(n\),
\(\sup_{x \in \mc{K}_N}\tvnorm{\mc{P}_x - \Pi_x} \le \delta/(2 N_\delta)\),
so the first term in~\eqref{eq:perturbationSplit} is at most
\(\frac{N_\delta}{2} \cdot \delta/(2 N_\delta) = \delta/4\).

In summary, the first term in~\eqref{eq:perturbationSplit} is at most
\(\delta/4\), so together with its second term,
\(\tvnorm{T^{N_\delta}\mu - \ideal{T}^{N_\delta}\mu} \le \delta/2\).
Combining with \eqref{eq:idealisedMixing} at \(N = N_\delta\),
\begin{equation}
    \tvnorm{T^{N_\delta}\mu - \Pi}
        \le \tvnorm{T^{N_\delta}\mu - \ideal{T}^{N_\delta}\mu}
            + \tvnorm{\ideal{T}^{N_\delta}\mu - \Pi}
        \le \frac{\delta}{2} + \frac{\delta}{2}
        = \delta.
\end{equation}
Hence \(t_\delta(\mu) \le N_\delta \eqsim
\kappa\max\{\kappa, d\}\log(2\beta/\delta)\) by
\eqref{eq:NdeltaSize}, completing the proof.

    \bigskip{\sffamily\textbf{Acknowledgements}}

    This work is supported by the Deutsche Forschungsgemeinschaft (German Research
    Foundation) under Germany’s Excellence Strategy EXC 2181/1 (the Heidelberg
    STRUCTURES Excellence Cluster)

    R.~Kutri would like to thank the Isaac Newton Institute for 
    Mathematical Sciences for the support and hospitality during the programme
    \enquote{Representing, calibrating \& leveraging prediction uncertainty from statistics to machine
    learning}
    where work on this paper was undertaken.
    This work was supported by EPSRC grant no EP/R014604/1.

    \printbibliography

    \appendix

    \section{Technical Auxiliary Results}
        \label{app:aux}
        Laziness is required by the conductance bound \eqref{eq:lovaszBound}.
The \(\tfrac12\)-lazy version of a Markov chain with transition measures
\(\{\mc{T}_x\}_{x \in \R^d}\) has transition measures
\(\tfrac12\delta_x + \tfrac12\mc{T}_x\), which preserves the invariant
measure and reversibility while forcing a non-negative spectrum and
removing periodicity. It rescales the constants of
Section~\ref{sec:proof} in two elementary ways.

Comparing two lazy chains at the same state \(x\), the shared atom
\(\tfrac12\delta_x\) cancels, so
\begin{equation}
    \tvnorm[\big]{
        (\tfrac12\delta_x + \tfrac12\mc{T}_x)
        - (\tfrac12\delta_x + \tfrac12\ideal{\mc{T}}_x)
    }
    = \tfrac12\tvnorm{\mc{T}_x - \ideal{\mc{T}}_x}.
\end{equation}
This is the halving used in \eqref{eq:perturbationSplit}. Comparing a
single lazy chain at two states \(y_1 \ne y_2\), the atoms do not
cancel, and \(\tvnorm{\delta_{y_1} - \delta_{y_2}} \le 1\), so
\begin{equation}
    \tvnorm[\big]{
        (\tfrac12\delta_{y_1} + \tfrac12\ideal{\mc{T}}_{y_1})
        - (\tfrac12\delta_{y_2} + \tfrac12\ideal{\mc{T}}_{y_2})
    }
    \le \tfrac12\tvnorm{\delta_{y_1} - \delta_{y_2}}
        + \tfrac12\tvnorm{\ideal{\mc{T}}_{y_1} - \ideal{\mc{T}}_{y_2}}
    \le \tfrac12 + \tfrac12\cdot\tfrac12
    = \tfrac34.
\end{equation}
A continuity constant of \(1/2\) for the underlying kernels therefore
becomes \(3/4\) for the lazy chain, as used before
Lemma~\ref{lem:generalMixingBound}. See \cite{lovasz1993random} for
further detail.

The transfer from the idealised to the implemented chain rests on the
following perturbation estimate, obtained by a standard telescoping
argument with localisation to \(X\) in the spirit of
\cite{pillai2014ergodicity}.
\begin{lemma} \label{lem:chainPerturbation}
    Let \(T\) and \(\ideal{T}\) be transition operators on
    \(\mc{P}(\R^d)\) with transition measures
    \(\{\mc{T}_x\}_{x \in \R^d}\) and
    \(\{\ideal{\mc{T}}_x\}_{x \in \R^d}\), let \(\nu\) be invariant
    under \(\ideal{T}\), and let \(\mu\) be \(\beta\)-warm with respect
    to \(\nu\), with \(\beta \ge 1\). Then for any measurable
    \(X \subseteq \R^d\) and any \(N \in \N\),
    \begin{equation}
        \tvnorm{T^N \mu - \ideal{T}^N \mu}
            \le N \sup_{x \in X} \tvnorm{\mc{T}_x - \ideal{\mc{T}}_x}
                + \beta N \, \nu(\cmpl{X}).
    \end{equation}
\end{lemma}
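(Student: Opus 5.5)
The statement to prove is Lemma~\ref{lem:chainPerturbation}. The plan is a telescoping (hybrid) argument. Write \(\mu_k \coloneqq \ideal{T}^k \mu\) and decompose
\begin{equation}
    T^N \mu - \ideal{T}^N \mu
        = \sum_{k=0}^{N-1} T^{N-1-k}\big(T - \ideal{T}\big)\, \ideal{T}^{k} \mu .
\end{equation}
This identity follows by expanding and cancelling adjacent terms. Both \(T\) and \(\ideal{T}\) are Markov operators, so they are contractions in total variation on signed measures of zero mass. Applying this contraction to the operator \(T^{N-1-k}\), each summand is bounded by \(\tvnorm{T\mu_k - \ideal{T}\mu_k}\). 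It therefore suffices to show, for each \(k\),
\begin{equation}
    \tvnorm{T\mu_k - \ideal{T}\mu_k}
        \le \sup_{x \in X}\tvnorm{\mc{T}_x - \ideal{\mc{T}}_x} + \beta\,\nu(\cmpl{X}).
\end{equation}
Summing these \(N\) bounds then gives the claim.

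For the one-step bound, fix a measurable \(A\) and write
\begin{equation}
    T\mu_k(A) - \ideal{T}\mu_k(A) = \int \big(\mc{T}_x(A) - \ideal{\mc{T}}_x(A)\big)\, \d{\mu_k(x)}.
\end{equation}
Split the integral over \(X\) and \(\cmpl{X}\). On \(X\), the integrand is bounded in absolute value by \(\sup_{x\in X}\tvnorm{\mc{T}_x - \ideal{\mc{T}}_x}\), and \(\mu_k(X)\le 1\). On \(\cmpl{X}\), the integrand is bounded by \(1\), so that part contributes at most \(\mu_k(\cmpl{X})\). Taking the supremum over \(A\) yields the one-step bound with \(\mu_k(\cmpl{X})\) in place of \(\beta\nu(\cmpl{X})\).

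The key remaining step is to show \(\mu_k(\cmpl{X}) \le \beta\,\nu(\cmpl{X})\) uniformly in \(k\), i.e.\ that warmness propagates under \(\ideal{T}\). Since \(\mu \le \beta\nu\) setwise by \eqref{eq:warmnessContinuityArgument}, and \(\ideal{T}\) is a positive linear operator, we get \(\ideal{T}^k\mu \le \beta\,\ideal{T}^k\nu = \beta\nu\) setwise. Here only invariance of \(\nu\) is used; reversibility is not needed. Note that the telescoping must be arranged so that the discrepancy is always evaluated against \(\ideal{T}^k\mu\) and never against \(T^k\mu\), because \(T\) need not preserve \(\nu\). This ordering is the one point requiring care. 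I would also check two smaller issues: the TV contraction property is valid with the normalisation \eqref{eq:tvDistance}, and measurability of \(x\mapsto \tvnorm{\mc{T}_x-\ideal{\mc{T}}_x}\) can be avoided by working with a fixed set \(A\) before taking the supremum.
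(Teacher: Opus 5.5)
Your proposal is correct and follows essentially the same route as the paper. The paper also telescopes through the hybrids \(T^{N-k}\ideal{T}^{k}\mu\), uses non-expansiveness of \(T^{N-1-k}\) in total variation, and splits the one-step discrepancy over \(X\) and \(\cmpl{X}\). It then bounds \(\ideal{T}^{k}\mu(\cmpl{X}) \le \beta\,\nu(\cmpl{X})\) via positivity of \(\ideal{T}\) and invariance of \(\nu\) alone, with the same ordering you identified, evaluating the discrepancy against \(\ideal{T}^{k}\mu\) and never against \(T^{k}\mu\).
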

\begin{proof}
    Writing \(\nu_k \coloneqq \ideal{T}^k \mu\), define
    \(\mu_k \coloneqq T^{N-k} \nu_k\) for \(k = 0, \dots, N\), so that
    \(\mu_0 = T^N \mu\) and \(\mu_N = \ideal{T}^N \mu\). The triangle
    inequality gives
    \begin{equation} \label{eq:interpolation}
        \tvnorm{T^N \mu - \ideal{T}^N \mu}
            \le \sum_{k=0}^{N-1} \tvnorm{\mu_k - \mu_{k+1}}.
    \end{equation}
    Consecutive interpolants differ only in their \((k+1)\)-st step,
    \(\mu_k = T^{N-1-k}\big(T \nu_k\big)\) and
    \(\mu_{k+1} = T^{N-1-k}\big(\ideal{T} \nu_k\big)\). Both are images
    of \(T \nu_k\) and \(\ideal{T} \nu_k\) under the common operator
    \(T^{N-1-k}\), which is non-expansive in total variation
    by~\eqref{eq:tvFunctionalBound}, so
    \begin{equation}
        \tvnorm{\mu_k - \mu_{k+1}}
            \le \tvnorm{T \nu_k - \ideal{T} \nu_k}.
    \end{equation}
    For the single-step difference and any measurable \(A\),
    \begin{equation}
        \big| (T\nu_k)(A) - (\ideal{T}\nu_k)(A) \big|
            \le \int_{\R^d}
                \big| \mc{T}_x(A) - \ideal{\mc{T}}_x(A) \big|
                \,\d{\nu_k}(x)
            \le \sup_{x \in X} \tvnorm{\mc{T}_x - \ideal{\mc{T}}_x}
                + \nu_k(\cmpl{X}),
    \end{equation}
    splitting the integral over \(X\) and \(\cmpl{X}\), bounding
    \(\nu_k(X) \le 1\) on the first part and
    \(\big|\mc{T}_x(A) - \ideal{\mc{T}}_x(A)\big| \le 1\) on the second.
    Taking the supremum over \(A\),
    \begin{equation}
        \tvnorm{T \nu_k - \ideal{T} \nu_k}
            \le \sup_{x \in X} \tvnorm{\mc{T}_x - \ideal{\mc{T}}_x}
                + \nu_k(\cmpl{X}).
    \end{equation}
    Finally, warmness is preserved under the \(\nu\)-invariant operator
    \(\ideal{T}\). With an analogous argument to \eqref{eq:warmnessContinuityArgument},
    for every measurable \(A\),
    \begin{equation}
        \nu_k(A)
            = \int \ideal{\mc{T}}^{\,k}_y(A)\,\d{\mu}(y)
            \le \beta \int \ideal{\mc{T}}^{\,k}_y(A)\,\d{\nu}(y)
            = \beta\,(\ideal{T}^k \nu)(A)
            = \beta\,\nu(A),
    \end{equation}
    using \(\ideal{T}\)-invariance in the last step, so
    \(\nu_k(\cmpl{X}) \le \beta\, \nu(\cmpl{X})\). Summing the \(N\)
    terms in \eqref{eq:interpolation} yields the claim.
\end{proof}

\end{document}